\newtheorem{thm}{Theorem}[section]
\newtheorem{prop}[thm]{Proposition}
\newtheorem{lem}[thm]{Lemma}
\newtheorem{cor}[thm]{Corollary}
\newtheorem{rema}[thm]{Remark}
\theoremstyle{definition}
\theoremstyle{remark}
\newtheorem*{rem}{Remark}
\numberwithin{equation}{section}
\newcommand{\Zag}{\mathscr{L}}
\newcommand{\M}{\mathcal{M}}
\newcommand{\TM}{\vartheta}
\newcommand{\Res}{\mathcal{R}}
\newcommand{\STM}{\Theta}
\newcommand{\W}{\mathcal{W}}
\newcommand{\alt}{\alpha}
\newcommand{\Lo}{ l_1 }
\newcommand{\La}{ h_1 }
\newcommand{\LA}{ W }
\newcommand{\Le}{ h_1 }
\newcommand{\LE}{ W }
\newcommand{\Fs}{ F_2 }
\newcommand{\Lt}{ l_2 }
\newcommand{\Aa}{\mathbf{a}}
\newcommand{\sgh}{ \varepsilon }
\newcommand{\LB}{U}
\newcommand{\rr}{\mathbf{r}}
\newcommand{\HyG}{ {}_2F_1 }
\newcommand{\Mod}[1]{\ (\textup{mod}\ #1)}
\providecommand{\sym}{\operatorname{sym}}
\begin{document}

\title[Hybrid subconvexity for symmetric-square $L$-functions]{Hybrid subconvexity for Maass form symmetric-square $L$-functions}

\author[O.Balkanova]{Olga Balkanova}
\address{
Steklov Mathematical Institute of Russian Academy of Sciences, 8 Gubkina st., Moscow, 119991, Russia}
\email{balkanova@mi-ras.ru}

\author[D. Frolenkov]{Dmitry  Frolenkov}
\address{
Steklov Mathematical Institute of Russian Academy of Sciences, 8 Gubkina st., Moscow, 119991, Russia}
\email{frolenkov@mi-ras.ru}
\thanks{}

\begin{abstract}
Recently R. Khan and M. Young proved a mean Lindel\"{o}f estimate for the second moment of Maass form symmetric-square $L$-functions $L(\sym^2 u_{j},1/2+it)$  on the short interval of length $G\gg |t_j|^{1+\epsilon}/t^{2/3}$, where $t_j$ is a spectral parameter of the corresponding Maass form. Their estimate yields a subconvexity estimate for $L(\sym^2 u_{j},1/2+it)$ as long as $|t_j|^{6/7+\delta}
\ll t<(2-\delta)|t_j|$.  We  obtain a mean Lindel\"{o}f estimate for the same moment in shorter intervals, namely for $G\gg |t_j|^{1+\epsilon}/t$. As a corollary, we prove a subconvexity estimate for $L(\sym^2 u_{j},1/2+it)$  on the interval $|t_j|^{2/3+\delta}\ll t\ll |t_j|^{6/7-\delta}$.
\end{abstract}

\keywords{$L$-functions, moments, Maass forms}
\subjclass[2010]{Primary: 11F11, 11M99, 33C20}

\maketitle


\section{Introduction}
Let $u_j$ be a Hecke-Maass cusp form with Laplace eigenvalue $\kappa_{j}=1/4+t_{j}^2$.  Khan and Young proved the following result (see \cite[Theorem 1.1]{KhYoung}).
\begin{thm}\label{KY t-result}
Let $0<\delta<2$ be fixed, and let $t$, $T$, $G>1$ be such that
\begin{equation}\label{KY t restriction}
\frac{T^{3/2+\delta}}{G^{3/2}}\le t\le(2-\delta)T.
\end{equation}
The following estimate holds:
\begin{equation}\label{t-mean Lindelof}
\sum_{T<t_j\le T+G}\alpha_{j}|L(\sym^2 u_{j},1/2+it)|^2\ll T^{1+\epsilon}G.
\end{equation}
\end{thm}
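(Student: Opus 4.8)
The plan is to square out $L$ with an approximate functional equation, insert a smooth spectral weight so that the sum over $t_j$ runs over the whole cuspidal spectrum, and evaluate the resulting spectral average with the Kuznetsov trace formula; the claimed bound then appears as the diagonal main term and everything hard is in the off-diagonal. Note first that the two hypotheses in \eqref{KY t restriction} are only compatible when $G\gg_\delta T^{1/3}$, so there is plenty of smoothing to work with. Since $\sym^2 u_j$ is self-dual, $|L(\sym^2 u_j,1/2+it)|^2=L(\sym^2 u_j,1/2+it)L(\sym^2 u_j,1/2-it)$, and combining $L(\sym^2 u_j,s)=\zeta(2s)\sum_{n\ge1}\lambda_j(n^2)n^{-s}$ with the functional equation yields an approximate functional equation of the form
\[
|L(\sym^2 u_j,1/2+it)|^2=\sum_{n,m\ge1}\frac{\lambda_j(n^2)\lambda_j(m^2)}{\sqrt{nm}}\Big(\frac{m}{n}\Big)^{it}\,W\!\Big(\frac{nm}{t\,t_j^2}\Big)+(\text{dual})+(\text{lower order}),
\]
with $W$ smooth and rapidly decaying, so the sum is essentially restricted to $nm\ll(T^2t)^{1+\epsilon}$ (using $t_j\asymp T$); the factors $\zeta(1\pm2it)$ produced on the way are absorbed via $|\zeta(1+2it)|^{\pm1}\ll t^\epsilon$. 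Because $\alpha_j>0$, the left side of \eqref{t-mean Lindelof} is at most $\sum_j\alpha_j|L(\sym^2 u_j,1/2+it)|^2h(t_j)$ for any even nonnegative $h$ with $h\ge1$ on $[T,T+G]$, and a Gaussian $h(t)=\exp(-((t-T_0)/G)^2)+\exp(-((t+T_0)/G)^2)$ with $T_0=T+G/2$ is a convenient choice with negligible tails. After a Mellin transform the $t_j$-dependence of $W$ moves into the test function ($h(t_j)\mapsto h(t_j)t_j^{2s}$, integrated against $\widetilde W(s)$), so $W$ may henceforth be treated as $j$-independent. The restriction $t\le(2-\delta)T$ keeps $|t-2t_j|\gg T$ for $t_j\in[T,T+G]$, so the conductor does not drop and the sum genuinely has length $\asymp T^2t$.

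\textbf{Kuznetsov, diagonal, and Eisenstein.} The Hecke relation $\lambda_j(n^2)\lambda_j(m^2)=\sum_{e\mid(n,m)^2}\lambda_j(n^2m^2/e^2)$ reduces the spectral average to sums $\sum_j\alpha_j\lambda_j(N)h(t_j)$, to which the Kuznetsov formula applies. The identity term ($N=1$) forces $n=m$, $e=n^2$; since $\int h(t)\,t\tanh(\pi t)\,dt\asymp TG$ and $\sum_{n\ll T\sqrt t}n^{-1}\ll\log T$, it contributes $\ll T^{1+\epsilon}G$ --- the main term, matching the right side of \eqref{t-mean Lindelof}. The continuous-spectrum term reduces, up to factors $|\zeta(1+2iy)|^{-2}\ll T^\epsilon$, to $|\zeta(1/2+it)|^2\int_T^{T+G}|\zeta(1/2+i(t+2y))|^2|\zeta(1/2+i(t-2y))|^2\,dy$, which by convexity for the first factor and the fourth-moment bound for $\zeta$ on the critical line in windows of length $\gg T^{1/3}$ (which we have) is $\ll T^{1+\epsilon}G$. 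So it remains to bound the off-diagonal term.

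\textbf{The off-diagonal.} After Kuznetsov this term has the shape
\[
\sum_{n,m}\frac{(m/n)^{it}}{\sqrt{nm}}\,W\!\Big(\frac{nm}{T^2t}\Big)\sum_{e\mid(n,m)^2}\sum_{c\ge1}\frac{S(n^2m^2/e^2,1;c)}{c}\,\mathcal H^{+}\!\Big(\frac{4\pi nm}{ec}\Big),
\]
where $\mathcal H^{+}$ is the $J$-Bessel transform of $h(t)\,t\tanh(\pi t)$ (the $K$-Bessel piece, if present, being rapidly decaying and negligible). Using uniform asymptotics for $J_{2it}(x)$ with $t\asymp T$: the $t$-integral defining $\mathcal H^{+}(x)$ has a stationary point only when $x\asymp T$, so $\mathcal H^{+}(x)$ is negligible unless $x\gg T^{1-\epsilon}$, where it has size $\ll TG/\sqrt x$, with the explicit form $x^{-1/2}e^{\pm ix}$ times a smooth amplitude of size $\asymp TG$ once $x$ is large, and a more intricate but controlled (Airy-type) shape in the intermediate range. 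This restricts $c\ll nm/(eT)$, and substituting leaves a weighted sum of Kloosterman sums against the oscillatory factor $e^{\pm4\pi i\,nm/(ec)}(m/n)^{it}$. The trivial bound here exceeds $T^{1+\epsilon}G$ by a fixed power of $T$, so one must extract cancellation from the archimedean oscillation: I would open the Kloosterman sums and apply Poisson summation in $n$ and in $m$ (the $n,m$-sums now carrying no Hecke eigenvalues), producing an exponential integral with phase $4\pi nm/(ec)\mp t\log(n/m)+O(\log)$; a stationary-phase analysis yields a much shorter dual sum. Here the hypothesis $t\le(2-\delta)T\asymp2t_j$ is exactly what keeps the relevant stationary point nondegenerate (avoiding the coalescence at $t=2t_j$), and the hypothesis $t\ge T^{3/2+\delta}/G^{3/2}$ is precisely what makes the dual sum short enough that, after Weil's bound $S(a,1;c)\ll c^{1/2+\epsilon}(a,c)^{1/2}$ and summation over $c$, the whole off-diagonal is $\ll T^{1+\epsilon}G$.

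\textbf{Main obstacle.} The divisor bookkeeping (the $e$-sums and the $\zeta(2s)$ factors) and carrying the dual term of the approximate functional equation through the same argument --- same conductor, identical treatment --- are routine. The genuine difficulty is the off-diagonal step: making the stationary-phase analysis, both of $\mathcal H^{+}$ and of the Poisson-dual exponential integrals, uniform in all of $n,m,c,e,t,t_j$, and in particular controlling the edge effects when the stationary point approaches the boundary of the window $[T,T+G]$ or the degenerate configuration $t=2t_j$, so that the surviving contribution has exactly the strength needed to land on $T^{1+\epsilon}G$ throughout the stated hybrid range \eqref{KY t restriction}.
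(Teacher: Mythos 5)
The paper does not itself prove this statement --- it quotes it from Khan--Young \cite{KhYoung} (``Khan and Young proved the following result''), so there is no in-paper proof to compare against. What you have sketched is broadly the Khan--Young route: square out via an approximate functional equation, insert a spectral weight, pass through the Kuznetsov formula, observe that the diagonal gives $T^{1+\epsilon}G$, bound the Eisenstein term using fourth-moment input, and attack the off-diagonal by opening Kloosterman sums, Poisson summation, and stationary phase. The paper's own method --- used for its Theorem~\ref{BF t-result}, which enlarges the admissible range --- is genuinely different: it inserts the approximate functional equation for only one factor, applies a reciprocity-type formula (from \cite{Bal}, stated as Theorem~\ref{thm rho=1/2 exact}) expressing the twisted first moment in terms of Zagier $L$-series $\Zag_{n^2-4m^2}$, develops uniform asymptotics for the resulting hypergeometric integrals (via Temme's saddle-point method and the Liouville--Green/Airy method, rather than raw Kuznetsov transforms), applies a Voronoi summation formula for $\Zag_n$, and closes with the Heath--Brown large sieve for quadratic characters. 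That shift from multiple exponential/Gamma-factor integrals to integrals of classical special functions is precisely what the authors claim buys the improvement from the window $G\gg T^{1+\epsilon}/t^{2/3}$ of Khan--Young to $G\gg T^{1+\epsilon}/t$.

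Read as a self-contained proof of the statement, your proposal has a genuine gap exactly where the theorem lives: the off-diagonal. You write that one should ``open the Kloosterman sums and apply Poisson summation \ldots\ a stationary-phase analysis yields a much shorter dual sum,'' and you defer the hard parts --- the uniform asymptotics of the $J$-Bessel transform $\mathcal H^+$ near the transition $x\asymp 2t_j$ (Airy regime), the stationary-phase analysis of the Poisson-dual integrals uniformly in $n,m,e,c,t,t_j$, the bookkeeping over the divisor sum in $e$ and over the dual term of the approximate functional equation, and the edge/degenerate configurations near the boundary of $[T,T+G]$ and near $t=2t_j$. These are the steps from which the threshold $t\ge T^{3/2+\delta}/G^{3/2}$ emerges; asserting that the dual sum is ``short enough \ldots\ after Weil's bound'' without carrying them out does not establish the bound. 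So the plan is plausible and matches the Khan--Young strategy, but it is an outline rather than a proof.
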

A direct corollary of Theorem \ref{KY t-result} is the following subconvexity estimate, see \cite[Corollary 1.2]{KhYoung}.
\begin{cor}\label{KY subcon cor}
For $0<\delta<2$  and
\begin{equation}\label{KY t subcon restriction}
|t_j|^{6/7+\delta}<t<(2-\delta)|t_j|
\end{equation}
one has
\begin{equation}\label{t-subcon}
|L(\sym^2 u_{j},1/2+it)|\ll |t_j|^{1+\epsilon}t^{-1/3}.
\end{equation}
\end{cor}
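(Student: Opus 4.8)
The plan is to derive \eqref{t-subcon} from Theorem~\ref{KY t-result} by the standard device of isolating a single term in a short spectral second moment. Given a Hecke--Maass cusp form $u_j$ with $|t_j|$ large and a real $t$ in the range \eqref{KY t subcon restriction}, I would set $T:=|t_j|-1$, so that $u_j$ contributes to the sum \eqref{t-mean Lindelof} for every $G\ge 1$, and I would take $G$ to be essentially the smallest window length allowed by the constraint \eqref{KY t restriction}, namely $G\asymp T^{1+\epsilon}t^{-2/3}$, where $\epsilon$ stands for any fixed positive constant coming from the fixed $\delta$ in \eqref{KY t restriction}. A short check confirms that with this choice $G\gg 1$ (because $t<2T$ forces $t^{2/3}<T^{1+\epsilon}$ once $T$ is large), that the first inequality in \eqref{KY t restriction} holds, and that $t\le(2-\delta)|t_j|$ gives the second; thus Theorem~\ref{KY t-result} applies to this window.

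Since the spectral weights $\alpha_j$ are nonnegative, dropping every term except the one indexed by $j$ in \eqref{t-mean Lindelof} yields $\alpha_j|L(\sym^2 u_j,1/2+it)|^2\ll T^{1+\epsilon}G\ll T^{2+\epsilon}t^{-2/3}$. To remove the weight I would use that $\alpha_j\asymp L(\sym^2 u_j,1)^{-1}$ (this is built into the definition of the harmonic weight $\alpha_j=|\rho_j(1)|^2/\cosh(\pi t_j)$ via the Rankin--Selberg identity), together with the classical upper bound $L(\sym^2 u_j,1)\ll t_j^{\epsilon}$; these give $\alpha_j\gg t_j^{-\epsilon}\gg T^{-\epsilon}$, and therefore $|L(\sym^2 u_j,1/2+it)|^2\ll T^{2+\epsilon}t^{-2/3}$, i.e.\ $|L(\sym^2 u_j,1/2+it)|\ll|t_j|^{1+\epsilon}t^{-1/3}$, which is precisely \eqref{t-subcon}.

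Finally, to see that \eqref{t-subcon} is a genuine subconvexity bound exactly on the range \eqref{KY t subcon restriction}: the analytic conductor of $L(\sym^2 u_j,\cdot)$ at $1/2+it$ is $\asymp(1+|t|)(1+|t-2t_j|)(1+|t+2t_j|)$, which for $1\ll t<(2-\delta)|t_j|$ is comparable to $t\,|t_j|^2$, so the convexity bound there reads $|L(\sym^2 u_j,1/2+it)|\ll(t|t_j|^2)^{1/4+\epsilon}=t^{1/4}|t_j|^{1/2+\epsilon}$; comparing exponents, $|t_j|^{1+\epsilon}t^{-1/3}$ beats $t^{1/4}|t_j|^{1/2+\epsilon}$ when $|t_j|^{1/2}\ll t^{7/12}$, i.e.\ $t\gg|t_j|^{6/7}$, and the strict inequality $t>|t_j|^{6/7+\delta}$ upgrades this to a power saving. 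Since the argument uses no amplification, there is really no serious obstacle here; the only steps that need care are the parameter bookkeeping — checking that $G\asymp T^{1+\epsilon}t^{-2/3}$ is $\gg 1$ and satisfies \eqref{KY t restriction} uniformly in $t$ over the stated range — and the invocation of the lower bound $\alpha_j\gg t_j^{-\epsilon}$, equivalently the classical estimate $L(\sym^2 u_j,1)\ll t_j^{\epsilon}$.
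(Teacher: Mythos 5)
Your argument is correct and is the standard deduction of a pointwise subconvexity bound from a short-interval mean-value estimate: set $T\asymp|t_j|$, choose the window $G$ at the smallest size permitted by \eqref{KY t restriction}, drop all but the $j$-th term by positivity of the spectral weights, and remove $\alpha_j$ via $\alpha_j\gg t_j^{-\epsilon}$. The paper does not reprove this corollary but simply cites it from \cite{KhYoung}; your proof matches the only sensible route and the one taken there, so nothing further is needed beyond the minor bookkeeping you already flag (e.g.\ using a slightly smaller $\delta'$ in Theorem~\ref{KY t-result} than the $\delta$ in \eqref{KY t subcon restriction} to absorb the shift $T=|t_j|-1$).
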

The condition $|t_j|^{6/7+\delta}<t$ guarantees that the estimate \eqref{t-subcon} is better than the convexity estimate
\begin{equation}\label{sym2 convexity}
|L(\sym^2 u_{j},1/2+2it)|\ll (1+|t|)^{1/4+\epsilon}|t_j|^{1/2+\epsilon}.
\end{equation}

Our main result allows us to extend the range \eqref{KY t restriction} in Theorem \ref{KY t-result} as follows.

\begin{thm}\label{BF t-result}
For
$G\gg \max\left(\frac{t^{2/3}}{T^{1/3}},\frac{T}{t}\right)T^{\epsilon}$  and $ t\ll T^{6/7-\epsilon}$
one has
\begin{equation}\label{BF t-mean Lindelof}
\sum_{T<t_j\le T+G}\alpha_{j}|L(\sym^2 u_{j},1/2+it)|^2\ll T^{1+\epsilon}G.
\end{equation}
\end{thm}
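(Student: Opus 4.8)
The plan is to follow the same skeleton as Khan–Young but to replace the final analytic step — where they estimate a shifted-sum/diagonal-type expression after opening the square and applying an approximate functional equation — with a more efficient treatment that is adapted to the shorter window $G\gg (T/t)T^\epsilon$. First I would open the square $|L(\sym^2 u_j,1/2+it)|^2$ via an approximate functional equation for the degree-three $L$-function, smooth the sum over $t_j$ by a test function concentrated on $[T,T+G]$, and apply the Kuznetsov trace formula to the spectral average $\sum_j \alpha_j \lambda_j(n)\lambda_j(m)(\dots)$. This produces a delta-term (diagonal $n=m$), which contributes $O(T^{1+\epsilon}G)$ provided $G$ is not too small, plus a sum of Kloosterman sums weighted by an integral transform of the test function against the Bessel kernel $J_{2it_j}$-type object arising from the symmetric-square $L$-function lengths (these lengths being of size $T^2$, the conductor of $\sym^2$).

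The heart of the argument is the analysis of the resulting Kloosterman-sum term. I would first carry out a stationary-phase analysis of the Bessel/oscillatory integral transform to identify the effective ranges of the moduli $c$ and the shifts; the key point, exactly as in Khan–Young, is that the $t$-aspect oscillation $e(\pm 2t\log(\dots))$ interacts with the $t_j$-aspect Bessel oscillation, and in the regime $t\ll T^{6/7-\epsilon}$ with $G$ slightly larger than $T/t$ the relevant transform is negligibly small unless $c$ lies in a short range, essentially $c\asymp T^{1+\epsilon}/(Gt)$ or similar. Then I would apply the Weil bound (or, to save the crucial logarithmic room, the large sieve / spectral large sieve for Kloosterman sums in the form of Deshouillers–Iwaniec) to the truncated sum, and check that the total is $\ll T^{1+\epsilon}G$ in the stated range. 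The shift from their threshold $G\gg T^{1+\epsilon}/t^{2/3}$ to $G\gg T^{1+\epsilon}/t$ should come precisely from exploiting the extra cancellation in the $c$-sum (or the shift-sum) that their cruder bound discards; the constraint $t\ll T^{6/7-\epsilon}$ is what guarantees the off-diagonal genuinely beats the diagonal so that no new main term appears.

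I expect the main obstacle to be controlling the oscillatory integral transform of the test function uniformly in all four parameters $t$, $T$, $G$, $c$ simultaneously — in particular, keeping track of the secondary terms in the stationary-phase expansion and ensuring that the error terms, when summed over $c$ and over the shift variable, do not exceed $T^{1+\epsilon}G$. A second delicate point is the boundary behaviour near $t\asymp T^{6/7}$, where the off-diagonal and diagonal contributions are comparable and one must argue that the off-diagonal is still acceptably bounded; this is why the theorem is stated with the strict restriction $t\ll T^{6/7-\epsilon}$. Everything else — the approximate functional equation, the precise form of the Kuznetsov formula, and the bookkeeping of the $\sym^2$ Dirichlet coefficients via Hecke relations — is standard and I would cite it rather than reprove it.
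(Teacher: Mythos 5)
Your proposal follows the Khan--Young framework (open the square with an approximate functional equation, apply Kuznetsov, bound the Kloosterman-sum term), whereas the paper takes a genuinely different route: it inserts an approximate functional equation into one factor only, and then uses a reciprocity formula for the \emph{twisted first moment} which converts the problem into a double sum of Zagier $L$-series $\mathscr{L}_{n^2-4m^2}(1/2+2it)$ against an integral $I(n/m,\rho;h)$ of a Gauss hypergeometric function. The rest of the paper analyses that hypergeometric integral (via uniform asymptotics proved in \cite{BF2F1 I}, \cite{BF2F1 II}), changes variables to bring $\mathscr{L}_{l}(1/2+2it)$ over an arithmetic progression, applies a Voronoi formula for the Zagier series, and closes with the Heath--Brown large sieve for quadratic characters. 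So the skeleton is not ``Kuznetsov + Kloosterman + Weil/Deshouillers--Iwaniec'' but ``reciprocity + Zagier $L$-series + Voronoi + large sieve for real characters.''

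More importantly, your sketch leaves the decisive step unresolved, and the heuristic you offer for why it should work is not the one that actually delivers the improvement. You write that ``the shift from $G\gg T^{1+\epsilon}/t^{2/3}$ to $G\gg T^{1+\epsilon}/t$ should come precisely from exploiting the extra cancellation in the $c$-sum,'' and you propose to control the integral transform by ``stationary-phase analysis.'' This is exactly the point at which Khan--Young stopped, and it is why their window is $G\gg T^{1+\epsilon}/t^{2/3}$. The gain in the paper does \emph{not} come from the arithmetic ($c$-)side; it comes from a more refined asymptotic analysis of the special functions in the transition regime, where the relevant quantities pass through a turning point and ordinary stationary phase degenerates. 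There the hypergeometric (resp.\ Bessel) function must be matched to an \emph{Airy function} via a Liouville--Green argument, uniformly in $(x,r,t)$, and the argument of the Airy function changes sign across $y=1-\alpha^2$ (see Proposition \ref{prop: y=1-alt^2} and Corollaries \ref{cor: y<1-alt^2}, \ref{cor: y>1-alt^2}); this two-stage approximation is carried out in the companion papers \cite{BF2F1 I}, \cite{BF2F1 II} and is explicitly identified in the introduction as ``the key point where the improvement of Khan and Young's method was achieved.'' Indeed Khan and Young themselves remark that in the parameter range they avoid, their integrals would require Airy-type approximation. You acknowledge the obstacle (``controlling the oscillatory integral transform\dots uniformly in all four parameters'') but do not resolve it, and your proposed resolution (standard stationary phase) is known not to suffice. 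Relatedly, the restriction $t\ll T^{6/7-\epsilon}$ does not arise because ``the off-diagonal beats the diagonal''; in the paper it is a technical artifact of the crude estimate \eqref{S2 q<Q0} for small $q$ (see \eqref{Q0 conditions}, \eqref{Q0 def}, \eqref{Q0 conditions GtT}), and the authors even remark that an improved large-sieve bound in arithmetic progressions would relax it. So as written your proposal has a genuine gap: it re-derives the Khan--Young framework but does not supply the new analytic input needed to extend the range of $G$.
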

\begin{cor}\label{BF subcon cor}
The following bounds hold:
\begin{equation}\label{BF t-subcon}
|L(\sym^2 u_{j},1/2+it)|\ll\frac{|t_j|^{1+\epsilon}}{\sqrt{t}}\quad\hbox{for}\quad|t_j|^{2/3+\epsilon}\ll t\leq |t_j|^{4/5},
\end{equation}
and
\begin{equation}\label{BF t-subcon2}
|L(\sym^2 u_{j},1/2+it)|\ll(|t_j|t)^{1/3}|t_j|^{\epsilon}\quad\hbox{for}\quad|t_j|^{4/5}\leq t\ll |t_j|^{6/7-\epsilon}.
\end{equation}
\end{cor}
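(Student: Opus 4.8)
The plan is to derive both estimates from Theorem~\ref{BF t-result} by positivity, keeping a single term in the spectral sum. Fix a Hecke--Maass cusp form $u_j$ and choose $T$ with $t_j\in(T,T+G]$ and $T\asymp|t_j|$ (possible since the admissible values of $G$ below are small powers of $|t_j|$). All summands on the left of \eqref{BF t-mean Lindelof} are non-negative, so
\[
\alpha_j\,|L(\sym^2 u_j,1/2+it)|^2\ll T^{1+\epsilon}G .
\]
The harmonic weight $\alpha_j$ satisfies the standard lower bound $\alpha_j\gg_\epsilon|t_j|^{-\epsilon}$ (Hoffstein--Lockhart / Iwaniec, via the relation of $\alpha_j$ to $L(\sym^2 u_j,1)$ and the first Fourier coefficient), whence
\[
|L(\sym^2 u_j,1/2+it)|\ll |t_j|^{1/2+\epsilon}\,G^{1/2}.
\]

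Next I would take $G$ as small as Theorem~\ref{BF t-result} allows, i.e.\ $G\asymp\max\bigl(t^{2/3}|t_j|^{-1/3},\,|t_j|/t\bigr)|t_j|^{\epsilon}$, and split according to which term dominates. One has $|t_j|/t\ge t^{2/3}|t_j|^{-1/3}$ exactly when $t\le|t_j|^{4/5}$; in that range $G\asymp|t_j|^{1+\epsilon}/t$ and the bound above becomes $|L(\sym^2 u_j,1/2+it)|\ll|t_j|^{1+\epsilon}t^{-1/2}$, which is \eqref{BF t-subcon}. For $|t_j|^{4/5}\le t\ll|t_j|^{6/7-\epsilon}$ instead $G\asymp t^{2/3}|t_j|^{-1/3}|t_j|^{\epsilon}$, giving $|L(\sym^2 u_j,1/2+it)|\ll|t_j|^{1/2-1/6+\epsilon}t^{1/3}=(|t_j|t)^{1/3}|t_j|^{\epsilon}$, which is \eqref{BF t-subcon2}; the two expressions match at $t=|t_j|^{4/5}$, both equal to $|t_j|^{3/5+\epsilon}$. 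The upper cutoff $t\ll|t_j|^{6/7-\epsilon}$ is precisely the hypothesis of Theorem~\ref{BF t-result}, and in the whole range the chosen $G$ is $\gg|t_j|^{1/5}\gg1$, so the theorem indeed applies.

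Finally I would check these are genuine subconvexity bounds by comparing with the convexity estimate \eqref{sym2 convexity}, i.e.\ $|L(\sym^2 u_j,1/2+it)|\ll|t_j|^{1/2+\epsilon}t^{1/4+\epsilon}$: the bound $|t_j|^{1+\epsilon}t^{-1/2}$ improves on it exactly when $t>|t_j|^{2/3}$, which is why \eqref{BF t-subcon} is stated for $|t_j|^{2/3+\epsilon}\ll t$, while $(|t_j|t)^{1/3}$ beats convexity for all $t\ll|t_j|^{2}$ and hence is subconvex throughout $|t_j|^{4/5}\le t\ll|t_j|^{6/7-\epsilon}$. I do not expect any real obstacle here: the argument is the routine positivity-plus-optimization deduction, with the only external ingredient being the (standard) lower bound for $\alpha_j$; all the substance is already contained in Theorem~\ref{BF t-result}. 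The mild bookkeeping point to be careful about is that the two regimes glue correctly at $t=|t_j|^{4/5}$ and that the optimal $G$ never drops below $1$.
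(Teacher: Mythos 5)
Your argument is correct and is the standard positivity-plus-optimization deduction the paper implicitly uses (the corollary is stated without a separate proof). You pick $T\asymp|t_j|$, drop all but the $j$-th term, invoke $\alpha_j\gg|t_j|^{-\epsilon}$, take $G$ at the lower edge of the admissible range, and split at the crossover $t=|t_j|^{4/5}$ — all of which matches the intended derivation, and your checks (matching at $t=|t_j|^{4/5}$, $G\gg|t_j|^{1/5}\gg1$, comparison with convexity) are correct.
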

The condition $t\gg |t_j|^{2/3+\epsilon}$ guarantees that \eqref{BF t-subcon} is better than the convexity bound \eqref{sym2 convexity}.
The restriction $t\ll |t_j|^{6/7-\epsilon}$ is of technical nature (see \eqref{Q0 conditions}, \eqref{Q0 def} and  \eqref{Q0 conditions GtT}). Though we do not think that this estimate is a limit of our  method (for proving \eqref{BF t-mean Lindelof}), it seems that our approach does not work for $t$ close to $t_j$ as in \eqref{KY t subcon restriction}. 

Note that in the special case of central values $L(\sym^2 u_{j},1/2)$, Khan and Young (see \cite[Theorem 1.3]{KhYoung}) proved the mean Lindel\"{o}f estimate over a much shorter interval, namely for $G\gg T^{1/5}$. In this case, our method yields the same result, as shown in \cite{BFsym2}. The proof of Theorem \ref{BF t-result} is similar to \cite[Theorem 1.3]{KhYoung} in \cite{BFsym2}. The main difference is that the analysis of special functions is technically  more complicated. In fact, a deeper study of hypergeometric functions is exactly what allows us to obtain an improvement.
Since a detailed comparison of our method with the one of Khan and Young has been provided in \cite[Sec.1]{BFsym2} we now discuss only the main steps of the proof, along with the ideas behind them and the reasons why they are beneficial.

Our proof combines a reciprocity type formula for the  twisted first moment with an approximate functional equation. Let us mention that the approach based on reciprocity type formulas has two main advantages. First, it usually simplifies the analysis of the arithmetic part of the proof, since instead of considering multiple Dirichlet series we get some $L$-functions. Of course, some calculations performed on multiple Dirichlet series are included in the calculations leading to $L$-functions, but they are usually simpler.  Second, in the analytic part instead of multiple integrals of exponential functions, Gamma factors and some smooth functions, we obtain either special functions (usually hypergeometric functions) or integrals of special functions. This allows us to use various integral representations (not only Mellin-Barnes integrals) and apply more techniques from the theory of asymptotic analysis of special functions.
For example, in order to obtain an asymptotic expansion of a special function, it is possible to use not only the saddle point method, but also the Liouville-Green method, whereas in the case of multiple integrals only the first method is applicable. 

For these reasons, we would like to study the second moment \eqref{BF t-mean Lindelof} using some formula of this kind. Unfortunately, the existing reciprocity type formula \cite[Theorem 1.1]{BFsym2Explicit} does not isolate the main term of the second moment, which makes it difficult to apply for asymptotic analysis. In order to avoid this problem, we combine both of methods by representing one of $L(\sym^2 u_{j},1/2+it)$ using an approximate functional equation, and then applying the reciprocity type formula for the twisted first moment proved in \cite{Bal} (see also \cite[sec.3]{BF2mom}).  This formula relates the first twisted moment of $L(\sym^2 u_{j},1/2+it)$ to the first moment of Zagier $L$-series. Accordingly, we translate the problem of estimating the second moment \eqref{BF t-mean Lindelof} to the study of the double sum
\begin{equation}\label{eq:doublesum}
\sum_{m\ll T^{1+\epsilon}\sqrt{t}}\left(\sum_{n<2m}+\sum_{n>2m}\right)\Zag_{n^2-4m^2}(1/2+it)\frac{I(n/m,1/2+it)}{n^{1/2-2it}m^{1/2\pm2it}},
\end{equation}
where $I(x,\rho)$ is an integral (over $r$) of the Gauss hypergeometric function (the reason why we distinguish the cases $n<2m$ and $n>2m$ is that the  function $I(x,\rho)$ has different integral representations in these ranges). 

As the next step, we  prove an asymptotic formula for the function $I(x,\rho)$.  To this end, we first substitute asymptotic expansions of the Gauss hypergeometric functions (proved in \cite{BF2F1 I}, \cite{BF2F1 II}) into the integral representation of  $I(x,\rho)$. The proofs of these two asymptotic expansions are quite nontrivial due to the fact that we are looking for a uniform expansion in three variables $x,r,t$. As already mentioned, there are two methods (uniform saddle point method and the Liouville-Green method) for obtaining such expressions. In turns out that in one case it is easier to apply a version of the saddle point method (due to Temme),  and in the second case the Liouville-Green method is more suitable. Note that in the latter case, we perform a two-step process to obtain the asymptotic expansion. At the first stage, the Gauss hypergeometric function is approximated by Bessel functions of large purely imaginary order, and at the second stage, the resulting Bessel functions are approximated by Airy functions. The final result is quite complicated because the argument of the Airy function differs for $x\gtrless1-t^2/r^2$. 
Since such methods are a little unusual for number theorists, we decided to present the results on approximating special functions in separate articles \cite{BF2F1 I} and \cite{BF2F1 II}.
We believe that these asymptotic results are the key point where the improvement of Khan and Young's method was achieved. Indeed, in \cite[Remark before sec.9]{KhYoung} Khan and Young mentioned that in the case of some special restrictions on the parameters (that they do not need to consider), the corresponding integrals should also be approximated using the Airy function. As a result, in order to approximate the function $I(x,\rho)$  it is required to study the integrals of the Airy functions and some exponents. This has been done by using  some standard facts about Airy functions, the saddle point method and tedious calculations.

 Next, we substitute the resulting asymptotic formula for $I(x,\rho)$  in \eqref{eq:doublesum} and perform the change of variables:
\begin{equation}
n-2m=q,\quad n+2m=\rr.
\end{equation}
Since $I(n/m)$ is not negligible only if $q$ is very small compared to $\rr$ (approximately if $q\ll \rr t/(TG)$), we make one more change of variables $\rr=l/q$,  obtaining sums of $\Zag_{l}(1/2+it)$ over the arithmetic progression $l\equiv0\pmod{q}$. To handle this congruence,  we rewrite it with the use of additive harmonics. This allows us  to apply a Voronoi summation formula for the sum over $l$. Arithmetic part of the further computations is identical to that in \cite{BFsym2}. Finally, we obtain  the second moment of Dirichlet $L$-functions, which we estimate using the Heath-Brown large sieve for quadratic characters. The analytic part of computations (after application of Voronoi's formula) is more complex than in \cite{BFsym2}, since the integral transform in Voronoi's formula now includes the Bessel functions $J_{it}(x)$ and $K_{it}(x)$. Therefore, we have to use uniform asymptotic expansions for these Bessel functions, which causes some additional technical difficulties.

The paper is organized as follows. In Section \ref{sec:prelim} we provide  some background information. Section \ref{sec:Voronoi it} is centered around the Voronoi summation formula for  Zagier $L$-series.
Next, in Section \ref{sec:formula for 1moment} we state a formula representing the first moment of symmetric square $L$-functions in terms of sums of Zagier $L$-series. In Section \ref{sec:asympt for 2F1} we collect various asymptotic formulas for the hypergeometric functions appearing in the exact formula for the first moment. In Section \ref{sec:2mom t to LZag} we start proving Theorem \ref{BF t-result} by rewriting the second moment in terms of sums of Zagier $L$-series and by transforming these sums into a form suitable for the application of Voronoi's summation formula.
Section \ref{sec:integral transforms} is devoted to the analysis of various integrals arising as a result of applying the Voronoi summation formula. Finally, in Section \ref{sec:proof of t theorem} we complete the proof of Theorem \ref{BF t-result}.


\section{Notation and preliminaries}\label{sec:prelim}
Let $e(x):=\exp(2\pi ix)$ and let $\Gamma(z)$  denote the Gamma function. For  a fixed $\sigma$ and $|t|\rightarrow\infty$ Stirling's formula yields the asymptotic formula:
\begin{multline}\label{Stirling2}
\Gamma(\sigma+it)=\sqrt{2\pi}|t|^{\sigma-1/2}\exp(-\pi|t|/2)
\exp\left(i\left(t\log|t|-t+\frac{\pi t(\sigma-1/2)}{2|t|}\right)\right)\\\times
\left(1+\sum_{j=1}^{N-1}a_j/t^j+O(|t|^{-N})\right).
\end{multline}

Let $\{u_j\}$ be an orthonormal basis of the space of Maass cusp forms of level one such that each $u_j$ is an eigenfunctions of all Hecke operators and the hyperbolic Laplacian. We denote by $\{\lambda_{j}(n)\}$ the eigenvalues of Hecke operators acting on $u_{j}$ and by $\kappa_{j}=1/4+t_{j}^2$ (with  $t_j>0$)  the eigenvalues of the hyperbolic Laplacian acting on $u_{j}$.
It is known that
\begin{equation*}
u_{j}(x+iy)=\sqrt{y}\sum_{n\neq 0}\rho_{j}(n)K_{it_j}(2\pi|n|y)e(nx),
\end{equation*}
where $K_{\alpha}(x)$ is the $K$-Bessel function and $\rho_{j}(n)=\rho_{j}(1)\lambda_{j}(n).$
The associated symmetric square $L$-function, defined for $\Re{s}>1$ by the series
\begin{equation*}
L(\sym^2 u_{j},s)=\zeta(2s)\sum_{n=1}^{\infty}\frac{\lambda_{j}(n^2)}{n^s},
\end{equation*}
can be  analytically continued to the whole complex plane and has the functional equation
\begin{equation}\label{func.eq}
L_{\infty}(s,t_j)L(\sym^2 u_{j},s)=L_{\infty}(1-s,t_j)L(\sym^2 u_{j},1-s),
\end{equation}
where
\begin{equation}\label{L.infinity}
L_{\infty}(s,t_j)=\pi^{-3s/2}\Gamma\left(\frac{s}{2}\right)\Gamma\left(\frac{s+2it_j}{2}\right)\Gamma\left(\frac{s-2it_j}{2}\right).
\end{equation}
The analytic conductor of $L(\sym^2 u_{j},1/2+2it)$ is equal to
\begin{equation}\label{Lsymsq conductor}
Q:=(1+|t|)(1+|t+t_j|)(1+|t-t_j|).
\end{equation}
Therefore, in the case when $t<(1-\delta)t_j$ the convexity estimate is follows:
\begin{equation}\label{Lsymsq convexity}
|L(\sym^2 u_{j},1/2+2it)|\ll Q^{1/4+\epsilon}\ll (1+|t|)^{1/4+\epsilon}|t_j|^{1/2+\epsilon}.
\end{equation}
Applying \eqref{func.eq} it is possible to derive the approximate functional equation stated in the next lemma.
\begin{lem}
One has
\begin{equation}\label{approx.func.eq.}
L(\sym^2 u_{j},1/2-2it)=\sum_{m=1}^{\infty}\frac{\lambda_j(m^2)}{m^{1/2-2it}}V(n,-t,t_j)+
\frac{L_{\infty}(1/2+2it,t_j)}{L_{\infty}(1/2-2it,t_j)}
\sum_{m=1}^{\infty}\frac{\lambda_j(m^2)}{m^{1/2+2it}}V(n,t,t_j),
\end{equation}
where for any $y>0$ and $a>0$
\begin{equation}\label{approx.fun.eq.Vdef}
V(y,t,t_j)=\frac{1}{2\pi i}\int_{(a)}\frac{L_{\infty}(1/2+2it+z,t_j)}{L_{\infty}(1/2+2it,t_j)}\zeta(1+4it+2z)G(t,z)y^{-z}\frac{dz}{z},
\end{equation}
and
\begin{equation}\label{Gdef}
G(t,z)=\exp(z^2)P_n(t,z^2).
\end{equation}
Here $P_n(t,\cdot)$ is a polynomial of degree n such that $P_n(t,0)=1$, $P_n(t,-4t^2)=0$ and $P_n(t,(1/2\pm2it+2j)^2)=0$ for $j=0,\ldots, n-1$.
\end{lem}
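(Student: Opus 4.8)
The plan is to run the usual two-step derivation of an approximate functional equation, with the even weight $G$ playing the role of the cutoff. I would first record that the completed $L$-function $\Lambda(s,t_j):=L_\infty(s,t_j)L(\sym^2 u_j,s)$ is entire (since $L(\sym^2 u_j,s)$ is entire and its trivial zeros cancel the poles of the archimedean factor \eqref{L.infinity}) and that by \eqref{func.eq} it satisfies $\Lambda(s,t_j)=\Lambda(1-s,t_j)$. Fixing $a>1/2$, the idea is to evaluate
\begin{equation*}
I:=\frac{1}{2\pi i}\int_{(a)}\frac{L_\infty(1/2-2it+z,t_j)}{L_\infty(1/2-2it,t_j)}\,L(\sym^2 u_j,1/2-2it+z)\,G(-t,z)\,\frac{dz}{z}
\end{equation*}
in two different ways. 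On the line $\Re z=a$ one has $\Re(1/2-2it+z)=1/2+a>1$, so inserting $L(\sym^2 u_j,s)=\zeta(2s)\sum_m\lambda_j(m^2)m^{-s}$ and interchanging the $m$-sum with the $z$-integral — legitimate because $G(-t,z)=\exp(z^2)P_n(-t,z^2)$ decays faster than any power of $|\Im z|$ on vertical lines while the remaining factors grow only polynomially — and comparing with \eqref{approx.fun.eq.Vdef} with $t$ replaced by $-t$ (so that $L_\infty(1/2+2i(-t)+z,t_j)=L_\infty(1/2-2it+z,t_j)$ and $\zeta(1+4i(-t)+2z)=\zeta(2(1/2-2it+z))$), one gets $I=\sum_{m=1}^{\infty}\lambda_j(m^2)m^{-1/2+2it}V(m,-t,t_j)$, which is the first sum in \eqref{approx.func.eq.}.

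For the second evaluation I would rewrite $I=\frac{1}{2\pi i}\int_{(a)}\Lambda(1/2-2it+z,t_j)L_\infty(1/2-2it,t_j)^{-1}G(-t,z)\,dz/z$ and move the contour to $\Re z=-a$. The horizontal segments vanish thanks to the decay of $G$, and since the integrand is holomorphic in the strip except for the simple pole of $1/z$ at $z=0$, one picks up the residue $\Lambda(1/2-2it,t_j)L_\infty(1/2-2it,t_j)^{-1}G(-t,0)=L(\sym^2 u_j,1/2-2it)$, using $G(-t,0)=P_n(-t,0)=1$. This yields
\begin{equation*}
L(\sym^2 u_j,1/2-2it)=I-\frac{1}{2\pi i}\int_{(-a)}\frac{\Lambda(1/2-2it+z,t_j)}{L_\infty(1/2-2it,t_j)}\,G(-t,z)\,\frac{dz}{z}.
\end{equation*}

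Finally, in the remaining integral I would apply the functional equation in the form $\Lambda(1/2-2it+z,t_j)=\Lambda(1/2+2it-z,t_j)$ and substitute $z\mapsto-z$, using that $G(-t,z)$ is even in $z$ and that $G(-t,z)=G(t,z)$ (immediate from the defining properties of $P_n$, all of which are symmetric under $t\mapsto-t$). This turns the integral into
\begin{equation*}
\frac{L_\infty(1/2+2it,t_j)}{L_\infty(1/2-2it,t_j)}\cdot\frac{1}{2\pi i}\int_{(a)}\frac{L_\infty(1/2+2it+z,t_j)}{L_\infty(1/2+2it,t_j)}\,L(\sym^2 u_j,1/2+2it+z)\,G(t,z)\,\frac{dz}{z},
\end{equation*}
and expanding $L(\sym^2 u_j,1/2+2it+z)$ into its Dirichlet series on $\Re z=a$ exactly as in the first step identifies this with $\frac{L_\infty(1/2+2it,t_j)}{L_\infty(1/2-2it,t_j)}\sum_{m=1}^{\infty}\lambda_j(m^2)m^{-1/2-2it}V(m,t,t_j)$. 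Substituting this and the expression for $I$ into the previous display gives \eqref{approx.func.eq.}. (The value of $V(y,t,t_j)$ is independent of $a>0$ because the integrand in \eqref{approx.fun.eq.Vdef} is holomorphic for $\Re z>0$, so the choice $a>1/2$ above is harmless.)

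The whole computation is essentially bookkeeping, and there is no genuinely hard step: the only points I would write out carefully are the absolute convergence that licenses both the interchange of summation and integration and the vanishing of the horizontal segments during the contour shift — both supplied by the factor $\exp(z^2)$ in $G$ — and the entirety of $\Lambda(\cdot,t_j)$, which guarantees that no pole other than $z=0$ is crossed. The additional zeros built into $P_n$ (at $z=-2it$, a pole of $\zeta$, and at the poles of the ratio of Gamma factors) are not needed for the identity itself; they serve only to make the weight $V$ holomorphic and rapidly decaying, which is what will be exploited in the later sections.
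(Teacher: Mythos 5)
Your argument is the standard contour-shift derivation of the approximate functional equation, which is exactly what the paper defers to by citing Khan's Lemma 2.2 and related references rather than writing out a proof; the sign bookkeeping under $z\mapsto -z$, the identification $G(-t,z)=G(t,z)$ from the $t\mapsto-t$ symmetry of the defining conditions on $P_n$, and the matching of the two Dirichlet-series expansions with $V(m,\mp t,t_j)$ all check out.
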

\begin{proof}
The proof is similar to \cite[Lemma 2.2]{Khan} (see also \cite[Lemma 1]{TangXu} and \cite[Lemma 7.2.1]{Ng}).
\end{proof}

Arguing in the same way as in \cite[Lemma 2]{TangXu} and applying  \eqref{L.infinity}, \eqref{Stirling2}, we obtain the following results.
\begin{lem}
For any positive $y,\,t_j$, $0\le|t|<t_j^{1-\epsilon}$ and $A$ we have
\begin{equation}\label{Vestimate}
V(y,t,t_j)\ll\left(\frac{t_j\sqrt{1+|t|}}{y}\right)^{A}.
\end{equation}
For any positive integer $N$ and $1\le y\ll t_j^{1+\epsilon}\sqrt{1+|t|}$, $0\le|t|<t_j^{1-\epsilon}$ the following asymptotic formula holds:
\begin{multline}\label{Vapproximation}
V(y,t,t_j)=
\frac{1}{2\pi i}\int_{(a)}
\left(\frac{\sqrt{t^2_j-t^2}}{\pi^{3/2}y}\right)^z
\frac{\Gamma(1/4+it+z/2)}{\Gamma(1/4+it)}\zeta(1+4it+2z)G(t,z)\\\times
\left(1+\sum_{k=1}^{N-1}\frac{p(t/t_j,v/t_j)}{t_j^k}\right)\frac{dz}{z}+O(t_j^{-N+\epsilon}),
\end{multline}
where $v=\Im(z)$ and $p(x_1,x_2)$ are some rational functions uniformly bounded in $|x_1|,|x_2|\ll1$.
\end{lem}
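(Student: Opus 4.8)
My plan is to compute directly with the Mellin--Barnes integral \eqref{approx.fun.eq.Vdef}, inserting Stirling's formula \eqref{Stirling2} into the Gamma factors, following the pattern of \cite[Lemma 2]{TangXu}. Expanding \eqref{L.infinity} gives
\[
\frac{L_\infty(1/2+2it+z,t_j)}{L_\infty(1/2+2it,t_j)}=\pi^{-3z/2}\,\frac{\Gamma(1/4+it+z/2)}{\Gamma(1/4+it)}\cdot\frac{\Gamma(1/4+i(t+t_j)+z/2)}{\Gamma(1/4+i(t+t_j))}\cdot\frac{\Gamma(1/4+i(t-t_j)+z/2)}{\Gamma(1/4+i(t-t_j))},
\]
and the integrand has no poles in the half-plane $\Re z>0$ (those of $1/z$, of $\zeta(1+4it+2z)$, and of the numerator Gamma functions all lie in $\Re z\le0$). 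For \eqref{Vestimate} I would shift the contour to $\Re z=A$ for arbitrarily large fixed $A>0$; there $\zeta(1+4it+2z)$ is bounded, $G(t,z)=\exp(z^2)P_n(t,z^2)$ contributes $\exp(-(\Im z)^2)$ times a polynomial (so the integral converges absolutely), $|y^{-z}|=y^{-A}$, and by Stirling each of the three Gamma quotients is bounded by $(1+|t|)^{A/2}$, $(1+|t+t_j|)^{A/2}$, $(1+|t-t_j|)^{A/2}$ respectively. Since the hypothesis $|t|<t_j^{1-\epsilon}$ forces $|t\pm t_j|\asymp t_j$, the product is $\ll((1+|t|)t_j^{2})^{A/2}=(t_j\sqrt{1+|t|})^{A}$, which gives \eqref{Vestimate}.

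For \eqref{Vapproximation} I would keep the first quotient $\Gamma(1/4+it+z/2)/\Gamma(1/4+it)$ (it appears in the final formula) and apply Stirling only to the two quotients with arguments near $i(t\pm t_j)$. Because $|t|<t_j^{1-\epsilon}$, both $t+t_j$ and $t-t_j$ have modulus $\asymp t_j\to\infty$ and lie in sectors bounded away from the negative reals, so writing $\log\Gamma(1/4+i\tau+z/2)-\log\Gamma(1/4+i\tau)=\tfrac{z}{2}\log(i\tau)+\sum_{k\ge1}c_k(z)\tau^{-k}$ with $c_k$ polynomial in $z$ (equivalently, applying \eqref{Stirling2} to numerator and denominator and dividing) yields
\[
\frac{\Gamma(1/4+i(t+t_j)+z/2)}{\Gamma(1/4+i(t+t_j))}\cdot\frac{\Gamma(1/4+i(t-t_j)+z/2)}{\Gamma(1/4+i(t-t_j))}=\bigl(i(t+t_j)\bigr)^{z/2}\bigl(i(t-t_j)\bigr)^{z/2}\Bigl(1+\sum_{k=1}^{N-1}\frac{p(t/t_j,v/t_j)}{t_j^{k}}+O(t_j^{-N})\Bigr).
\]
The decisive simplification is that $t+t_j>0>t-t_j$, so $\arg\bigl(i(t+t_j)\bigr)=\pi/2$ while $\arg\bigl(i(t-t_j)\bigr)=-\pi/2$; the factors $e^{\pm i\pi z/4}$ cancel and $\bigl(i(t+t_j)\bigr)^{z/2}\bigl(i(t-t_j)\bigr)^{z/2}=(t_j^{2}-t^{2})^{z/2}$. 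Combined with $\pi^{-3z/2}$ and $y^{-z}$ this is exactly $\bigl(\sqrt{t_j^{2}-t^{2}}/(\pi^{3/2}y)\bigr)^{z}$, and substitution into \eqref{approx.fun.eq.Vdef} produces the claimed main term.

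To make the error term rigorous I would first localize: since $|\exp(z^2)|=\exp(a^{2}-(\Im z)^{2})$ decays super-polynomially, the portion of the contour with $|\Im z|>t_j^{\epsilon}$ contributes $O(t_j^{-A})$ for every $A$, using the polynomial-in-$t_j$ bounds established in the proof of \eqref{Vestimate}. On the remaining segment $|\Im z|\le t_j^{\epsilon}$ Stirling applies with error $O\bigl(t_j^{-N}(1+|\Im z|)^{O(N)}\bigr)$ uniformly, which integrates to $O(t_j^{-N+\epsilon})$ against the Gaussian weight — inflating $N$ by a fixed amount if needed to absorb the at most $t_j^{O(a)}$ growth of the factor $\bigl(\sqrt{t_j^{2}-t^{2}}/(\pi^{3/2}y)\bigr)^{z}$ (here $y\ge1$) and of the kept Gamma quotient — and one then restores the full contour $(a)$ in the main term at negligible cost. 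The coefficients $c_k(z)$ divided by $\tau^{k}=t_j^{k}(1\pm t/t_j)^{k}$ are, in the effective range $|\Im z|\ll t_j^{\epsilon}$, of size $O(t_j^{-k+\epsilon})$ and organize into the displayed form with $p$ rational and bounded for $|t/t_j|,|v/t_j|\ll1$. The only genuinely non-routine point is this uniformity in $z$: one must track that the implied constants in Stirling grow only polynomially in $\Im z$, which is exactly why the Gaussian factor in $G$ and the preliminary localization are indispensable; the branch cancellation producing the clean $\sqrt{t_j^{2}-t^{2}}$ is the other place needing attention but is immediate once the signs of $t\pm t_j$ are observed.
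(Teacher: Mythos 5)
Your proposal is correct and follows essentially the same route the paper indicates: the paper's proof is simply a pointer to \cite[Lemma 2]{TangXu} plus \eqref{L.infinity} and \eqref{Stirling2}, and you carry out exactly that computation — factoring the $L_\infty$-ratio into three Gamma quotients, applying Stirling only to the two with arguments $\asymp t_j$, observing the branch cancellation $\bigl(i(t+t_j)\bigr)^{z/2}\bigl(i(t-t_j)\bigr)^{z/2}=(t_j^2-t^2)^{z/2}$ since $t+t_j>0>t-t_j$, and using the Gaussian in $G(t,z)$ to truncate the contour and absorb errors.
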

Note that the part of the integral in \eqref{Vapproximation} over $|\Im(z)|\gg|t_j|^{\epsilon}$ is negligible, so we may assume that $|\Im(z)|\ll|t_j|^{\epsilon}$. Furthermore, we remark that the polynomial $P_n(t,z^2)$ in  \eqref{Gdef} is introduced to cancel the poles of $\Gamma(1/4+it+z/2)\zeta(1+4it+2z)$ in \eqref{Vapproximation}.
\begin{lem}
For $r\sim T,$ $T^{\epsilon}\ll t\ll T^{1-\epsilon}$ one has
\begin{equation}\label{V(t-iN)est}
V(y,t,r-iN)\ll \frac{(1+|t|)^N(1+|r|)^{2N+\epsilon}}{y^{2N}}.
\end{equation}
\end{lem}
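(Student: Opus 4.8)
The plan is to obtain \eqref{V(t-iN)est} directly from the contour-integral definition \eqref{approx.fun.eq.Vdef} by shifting the line of integration far to the right and estimating the integrand with Stirling's formula. Recall that
\[
V(y,t,r-iN)=\frac{1}{2\pi i}\int_{(a)}\frac{L_{\infty}(1/2+2it+z,r-iN)}{L_{\infty}(1/2+2it,r-iN)}\,\zeta(1+4it+2z)\,G(t,z)\,y^{-z}\,\frac{dz}{z}.
\]
The key point is that, unlike in \eqref{Vestimate}–\eqref{Vapproximation}, here the spectral parameter has a fixed imaginary shift $-iN$, so the Gamma factors $\Gamma((1/2+2it\pm 2i(r-iN)+z)/2)=\Gamma((1/2+2it\pm 2ir)/2 \pm N + z/2)$ have real part growing like $\pm N$; one of them picks up poles, or near-poles, and the ratio $L_\infty(1/2+2it+z,r-iN)/L_\infty(1/2+2it,r-iN)$ can be as large as a power of $(1+|t|)(1+|r|)$ rather than being essentially bounded.

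First I would move the contour to $\Re z = a$ with $a$ chosen of size comparable to $2N$ (say $a=2N+\epsilon$), which is legitimate because $G(t,z)=\exp(z^2)P_n(t,z^2)$ decays super-exponentially in $\Im z$ and the polynomial $P_n$ was precisely designed (as noted after \eqref{Vapproximation}) to cancel the poles of the Gamma and zeta factors, so no residues are picked up in the shift. On the new line, the factor $y^{-z}$ contributes $y^{-a}\asymp y^{-2N}$, which is the source of the $y^{-2N}$ in \eqref{V(t-iN)est}. The factor $\exp(z^2)=\exp(a^2-v^2+2iav)$ with $v=\Im z$ gives an extra harmless factor $\exp(a^2)=\exp(O(N^2))$, which is absorbed into the implied constant since $N$ is fixed, together with the super-exponential decay in $v$ that makes the $v$-integral converge and lets us restrict to $|v|\ll N^{\epsilon}$.

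Next I would bound the ratio of $L_\infty$-factors on the line $\Re z = 2N+\epsilon$. Using \eqref{L.infinity} and Stirling \eqref{Stirling2}, the exponential factors $\exp(-\pi|\cdot|/2)$ in numerator and denominator cancel up to a bounded discrepancy (the arguments $1/2+2it\pm2i(r-iN)+z$ and $1/2+2it\pm2i(r-iN)$ have the same imaginary part up to $O(v)=O(N^\epsilon)$), so what survives is the ratio of the polynomial prefactors $|\cdot|^{\sigma-1/2}$: the numerator carries exponents with real part shifted up by $a/2\asymp N$, producing a gain of size $(1+|t|)^{N}(1+|t+r|)^{N}(1+|t-r|)^{N}\ll (1+|t|)^{N}(1+|r|)^{2N+\epsilon}$ after using $r\sim T$, $t\ll T^{1-\epsilon}$ so that $1+|t\pm r|\ll 1+|r|$. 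The factor $\zeta(1+4it+2z)$ on $\Re z=2N+\epsilon$ lies well inside the half-plane of absolute convergence and is $O(1)$, and $1/z$ is $O(1/N)$ on this line. Collecting all contributions — $y^{-2N}$, the Gamma-ratio gain $(1+|t|)^{N}(1+|r|)^{2N+\epsilon}$, and $O(1)$ from everything else — yields exactly \eqref{V(t-iN)est}.

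The main obstacle is the careful bookkeeping of the Gamma-quotient via Stirling when one of the three Gamma factors in $L_\infty$ has an argument whose real part is negative and large (of size $\approx N-$ something): there $\Gamma$ is not well-approximated by the na\"\i ve Stirling formula \eqref{Stirling2} near its poles, so one must either verify that the compensating polynomial $P_n(t,z^2)$ indeed removes the offending factor before estimating, or use the reflection formula $\Gamma(s)\Gamma(1-s)=\pi/\sin(\pi s)$ to convert that factor into one with positive real part plus a controlled trigonometric factor. Since $N$ is a fixed integer, all the $N$-dependent constants ($\exp(a^2)$, the number of reflection steps, etc.) are harmless, so the bound follows; the only genuine care needed is to ensure the contour shift is justified and that the $\pm$ choice of sign in the shifted Gamma argument is the one that lands in the right half-plane after reflection.
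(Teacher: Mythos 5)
Your approach --- shifting the contour to $\Re z \approx 2N$, reading off $y^{-2N}$ from $y^{-z}$, and bounding the quotient $L_{\infty}(1/2+2it+z,r-iN)/L_{\infty}(1/2+2it,r-iN)$ by three applications of Stirling to obtain $(1+|t|)^{N}(1+|t+r|)^{N}(1+|t-r|)^{N}\ll(1+|t|)^{N}(1+|r|)^{2N+\epsilon}$ --- is exactly the argument in the paper, which shifts to $\Re z = 2N$, writes $z=2N+2iv$, restricts to $|v|\ll T^{\epsilon/2}$, and estimates the resulting product of six Gamma factors. The potential ``obstacle'' you raise at the end is not actually an issue here: the Gamma factor with large negative real part is $\Gamma(1/4-N+i(t-r))$, which sits in the \emph{denominator} (so its poles only help), and moreover $|t-r|\gg T^{\epsilon}$ keeps its argument far from the negative real axis, so Stirling with fixed $\sigma$ and large imaginary part applies directly without any reflection-formula detour.
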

\begin{proof}
First, we move the line of  integration in \eqref{approx.fun.eq.Vdef} to $\Re{z}=2N.$ Writing $z=2N+2iv$,  using \eqref{L.infinity} and the estimate $|v|\ll T^{\epsilon/2}$, we infer that
\begin{multline}\label{Linf/Linf r-iN}
\frac{L_{\infty}(1/2+2it+z,t_j)}{L_{\infty}(1/2+2it,t_j)}\ll
\frac{\Gamma\left(\frac{1}{4}+N+i(v+t)\right)\Gamma\left(\frac{1}{4}+2N+i(v+t+r)\right)}
{\Gamma\left(\frac{1}{4}+it\right)\Gamma\left(\frac{1}{4}+N+i(t+r)\right)}
\\\times
\frac{\Gamma\left(\frac{1}{4}+i(v+t-r)\right)}{\Gamma\left(\frac{1}{4}+i(t-r)-N\right)}\ll
t^{N}r^{2N}e^{-\pi v/2}.
\end{multline}
Substituting \eqref{Linf/Linf r-iN} to \eqref{approx.fun.eq.Vdef} and estimating the resulting integral trivially we prove the lemma.
\end{proof}

\begin{lem}\label{lem: Linf/Linf y}
For $t_j=T+Gy$, $|y|\ll \log^2 T$, $G\ll T^{1-\epsilon}/\sqrt{t}$ and $\alt_0=t/T\ll T^{-\delta}$ one  has
\begin{equation}\label{Linfty/Linfty}
\frac{L_{\infty}(1/2+2it,t_j)}{L_{\infty}(1/2-2it,t_j)}=C(t,T)\exp\left(2iGy\log\frac{1+\alt_0}{1-\alt_0}\right)
\sum_{j=0}^{N}\frac{c_j}{T^{j\delta_0}}+O(T^{-A}),
\end{equation}
where $C(t,T),c_j\ll1$, $\delta_0>0$  is some fixed number and $A$ is an arbitrary large constant depending on $N$.
\end{lem}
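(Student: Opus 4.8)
The plan is to express the quotient as a product of three ratios of Gamma functions via \eqref{L.infinity}, apply Stirling's formula \eqref{Stirling2} to each factor, and then isolate the dependence on $y$. Substituting $s=1/2\pm 2it$ into \eqref{L.infinity} and simplifying, one obtains
\[
\frac{L_{\infty}(1/2+2it,t_j)}{L_{\infty}(1/2-2it,t_j)}=\pi^{-6it}\,\frac{\Gamma(1/4+it)}{\Gamma(1/4-it)}\cdot\frac{\Gamma(1/4+i(t+t_j))}{\Gamma(1/4-i(t+t_j))}\cdot\frac{\Gamma(1/4-i(t_j-t))}{\Gamma(1/4+i(t_j-t))},
\]
where we have written the last quotient with $t_j-t>0$ (legitimate since $t<t_j$). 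Each of the three Gamma quotients is a ratio of a value $\Gamma(1/4+i\tau)$, $\tau$ real, by its complex conjugate, hence a pure phase; in particular the left-hand side has modulus $1$. The factors $\pi^{-6it}$ and $\Gamma(1/4+it)/\Gamma(1/4-it)$ depend only on $t$ and $T$, so we absorb them into $C(t,T)$.

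For $u>0$, \eqref{Stirling2} with $\sigma=1/4$ gives
\[
\frac{\Gamma(1/4+iu)}{\Gamma(1/4-iu)}=e^{i(2u\log u-2u-\pi/4)}\Bigl(1+\sum_{k=1}^{N-1}\frac{b_k}{u^k}+O(u^{-N})\Bigr),
\]
the series coming from dividing the two Stirling expansions in \eqref{Stirling2}. We apply this with $u_2:=t+t_j=T+t+Gy$ and $u_3:=t_j-t=T-t+Gy$; from $G\ll T^{1-\epsilon}/\sqrt t$ and $t\ll T^{1-\delta}$ one has $|Gy|\ll T^{1-\epsilon}\log^2T/\sqrt t$, so (recall $T^{\epsilon}\ll t$) both $u_2$ and $u_3$ are of order $T$. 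Collecting the two resulting exponential factors, their product equals $e^{i(g(t_j)-4t)}$, where
\[
g(w):=2(w+t)\log(w+t)-2(w-t)\log(w-t),\qquad g'(w)=2\log\frac{w+t}{w-t}.
\]
Since $g(T)$, $t$ and the remaining constant phase depend only on $t$ and $T$, they too go into $C(t,T)$, and all the $y$-dependence now sits in $g(t_j)-g(T)$ and in the two Stirling series evaluated at $u_2$ and $u_3$.

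Integrating $g'$ and separating off the value of the integrand at $w=T$, we get
\[
g(T+Gy)-g(T)=2\int_{T}^{T+Gy}\log\frac{w+t}{w-t}\,dw=2Gy\log\frac{T+t}{T-t}+2\int_{T}^{T+Gy}\Bigl(\log\frac{w+t}{w-t}-\log\frac{T+t}{T-t}\Bigr)dw,
\]
and the first term is exactly the main phase $2Gy\log\frac{1+\alt_0}{1-\alt_0}$. Taylor-expanding $w\mapsto\log\frac{w+t}{w-t}$ about $w=T$ (its $k$-th derivative is $\ll t/T^{k+1}$) turns the last integral into an asymptotic series whose $k$-th term is $\ll(Gy)^{k+1}t/T^{k+1}\ll T^{-(k+1)\epsilon}(\log T)^{O(1)}$, by $G\ll T^{1-\epsilon}/\sqrt t$. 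In the same way $u_2^{-k},u_3^{-k}\ll T^{-k}$, and writing $u_i^{-1}=T^{-1}\bigl(1+O(\alt_0+|Gy|/T)\bigr)$ with $\alt_0\ll T^{-\delta}$ and $|Gy|/T\ll T^{-\epsilon}\log^2T$ shows that the two Stirling-series factors are $1+O(T^{-1})$ and admit asymptotic expansions in these parameters. Exponentiating the phase correction and multiplying all the correction factors, their product combines into $\sum_{j=0}^{N}c_j/T^{j\delta_0}+O(T^{-A})$ with $c_j\ll 1$, for any fixed $\delta_0<\min(2\epsilon,\delta,1)$; the truncation error is $O\bigl(T^{-(N+1)\delta_0}(\log T)^{O(N)}\bigr)$, which is $O(T^{-A})$ once $N>A/\delta_0$. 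This proves the lemma.

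The only real difficulty here is the bookkeeping: one must keep the genuinely oscillatory (and possibly large, of size up to $T^{1/2-\delta/2-\epsilon}\log^2T$) phase $2Gy\log\frac{1+\alt_0}{1-\alt_0}$ apart from the small parameters $1/T$, $\alt_0=t/T$ and $|Gy|/T$, and check that every correction term — arising both from Taylor-expanding the phase and from the Stirling asymptotic series at $u_2$ and $u_3$ — is bounded by a fixed negative power of $T$ and admits an asymptotic expansion. This ultimately rests on the cancellation of $t$ in $(Gy)^2t/T^2\ll T^{-2\epsilon}\log^4T$, which is exactly what the hypothesis $G\ll T^{1-\epsilon}/\sqrt t$ delivers.
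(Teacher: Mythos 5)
Your proof is correct and follows essentially the same route as the paper: expand the ratio via \eqref{L.infinity} into Gamma-quotients, apply Stirling's formula \eqref{Stirling2}, and Taylor-expand the resulting phase around $y=0$ to isolate the factor $\exp(2iGy\log\frac{1+\alt_0}{1-\alt_0})$, with the remaining $y$-dependent phase error of order $(Gy)^2t/T^2\ll T^{-2\epsilon}\log^4T$ controlled exactly by the hypothesis $G\ll T^{1-\epsilon}/\sqrt t$. The only cosmetic difference is that you recover the linear term by writing $g(T+Gy)-g(T)=\int_T^{T+Gy}g'(w)\,dw$ and evaluating $g'$ at $w=T$, whereas the paper parametrizes via $\alt=t/t_j$ and Taylor-expands $4it_j\alt\log t_j$ and $t_jf(\alt)$ separately; both lead to the same main phase and the same error bound.
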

\begin{proof}
It follows from \eqref{Stirling2} that
\begin{equation}\label{Gamma/Gamma Stirling}
\frac{\Gamma(\sigma+iy)}{\Gamma(\sigma-iy)}=
\exp\left(i\left(2y\log|y|-2t+\frac{\pi y(\sigma-1/2)}{|y|}\right)\right)
\left(1+\sum_{j=1}^{N-1}a_j/y^j+O(|y|^{-N})\right).
\end{equation}
Using \eqref{L.infinity},  \eqref{Gamma/Gamma Stirling} and writing $t=\alt t_j$, we prove that
\begin{equation}\label{Linf/Linf}
\frac{L_{\infty}(1/2+2it,t_j)}{L_{\infty}(1/2-2it,t_j)}=C(t)
\exp\left(2it_j(\left(2\alt\log|t_j|+f(\alt)\right)\right)
\left(1+\sum_{j=1}^{N-1}\frac{a_j}{T^j}+O(T^{-N})\right),
\end{equation}
where
\begin{equation}\label{Linfty/Linfty f def}
f(\alt)=(1+\alt)\log(1+\alt)-(1-\alt)\log(1-\alt)-2\alt=-\alt^3/3+O(\alt^5).
\end{equation}
Note that $$\alt=\frac{t}{t_j}=\frac{t}{T+Gy}=\frac{\alt_0}{1+Gy/T}.$$
Next, we are going to expand the function
$\exp\left(2it_j(\left(2\alt\log t_j+f(\alt)\right)\right)$ in the Taylor series at the point $y=0$.  First, we write the asymptotic formula:
\begin{equation}
4it_j\alt\log t_j=4it\log(T+Gy)=4it\log T+4iGy\alt_0+O\left(\frac{tG^2y^2}{T^2}\right).
\end{equation}
Next, for the sake of simplicity, we let $g(x):=f(\frac{\alt_0}{1+x}).$ Then
\begin{equation}
f(\alt)=g(Gy/T)=g(0)+g'(0)\frac{Gy}{T}+O(\alt^3\frac{G^2y^2}{T^2})=
f(\alt_0)-\alt_0f'(\alt_0)\frac{Gy}{T}+O(\alt^3\frac{G^2y^2}{T^2}).
\end{equation}
Consequently,
\begin{multline}\label{f(alt) in y expansion}
t_jf(\alt)=Tf(\alt_0)+Gy\left(f(\alt_0)-\alt_0f'(\alt_0)\right)+O(\alt_0^3\frac{G^2y^2}{T})=\\=
Tf(\alt_0)+Gy\left(\log\frac{1+\alt_0}{1-\alt_0}-2\alt_0\right)+O(\alt_0^3\frac{G^2y^2}{T}).
\end{multline}
Finally, assuming that $G\ll T^{1-\epsilon}/\sqrt{t}$, we prove that
\begin{multline}
\exp\left(2it_j(\left(2\alt\log t_j+f(\alt)\right)\right)=
\exp\left(4it\log T+2iTf(\alt_0)\right)\\\times
\exp\left(2iGy\log\frac{1+\alt_0}{1-\alt_0}\right)\left(1+O\left(\frac{tG^2y^2}{T^2}\right)\right).
\end{multline}
\end{proof}
\begin{lem}
For $t_j=T+Gy$, $|y|\ll \log^2 T$, $G,t\ll T^{1-\delta_0}$  one  has
\begin{equation}\label{Vapprox y=0}
V(y,t,T+Gy)=V(y,t,T)+\tilde{V}(y,t,t_j)+O(t_j^{-N+\epsilon}),
\end{equation}
\begin{multline}\label{Vtilde}
\tilde{V}(y,t,t_j)=
\frac{1}{2\pi i}\int_{(a)}
\left(\frac{\sqrt{T^2-t^2}}{\pi^{3/2}y}\right)^z\sum_{j=0}^{M}\frac{c_j(z)}{T^{j\delta_0}}
\frac{\Gamma(1/4+it+z/2)}{\Gamma(1/4+it)}\zeta(1+4it+2z)G(t,z)\\\times
\left(1+\sum_{k=1}^{N-1}\frac{p(t/t_j,v/t_j)}{t_j^k}\right)\frac{dz}{z}+O(t_j^{-N+\epsilon}),
\end{multline}
where $c_j(z)\ll1$.
\end{lem}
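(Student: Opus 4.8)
The plan is to establish \eqref{Vapprox y=0}, with $\tilde V$ given by \eqref{Vtilde}, by expanding the asymptotic formula \eqref{Vapproximation} with respect to the spectral parameter about $t_j=T$. Under the hypotheses $G,t\ll T^{1-\delta_0}$ and $|y|\ll\log^2 T$ one has $|Gy|\ll T^{1-\delta_0}\log^2 T=o(T)$, so $t_j=T+Gy\asymp T$ and $t\ll T^{1-\delta_0}\ll t_j^{1-\epsilon}$; hence \eqref{Vapproximation} applies both at $t_j=T+Gy$ and at $t_j=T$, the latter instance being $V(y,t,T)$ up to $O(T^{-N+\epsilon})$. As noted after \eqref{Vapproximation}, the factor $\exp(z^2)$ coming from \eqref{Gdef} makes the part of the contour with $|\Im z|\gg\log^2 T$ negligible, so I would first truncate to $|\Im z|\le\log^2 T$; there $|z|\ll\log^2 T$, and $G(t,z)$, $1/z$ and the fixed-order binomial coefficients $\binom{z/2}{i}$ are all $\ll T^{\epsilon}$.

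On the truncated contour the only $t_j$-dependent factors in \eqref{Vapproximation} are $\bigl(\sqrt{t_j^2-t^2}/(\pi^{3/2}y)\bigr)^{z}$ and the correction $1+\sum_{k}p(t/t_j,v/t_j)/t_j^{k}$. For the first I would write $\sqrt{t_j^2-t^2}=\sqrt{T^2-t^2}\,(1+w)^{1/2}$ with $w=(2TGy+G^2y^2)/(T^2-t^2)=\bigl(2Gy/T+(Gy/T)^2\bigr)/(1-t^2/T^2)$, and observe that $w\ll T^{-\delta_0+\epsilon}$ since $Gy/T\ll T^{-\delta_0}\log^2 T$ and $t^2/T^2\ll T^{-2\delta_0}$. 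Consequently $\bigl|\tfrac{z}{2}\log(1+w)\bigr|\ll T^{-\delta_0+\epsilon}$ uniformly in $z$, and expanding the exponential,
\[
\Bigl(\tfrac{\sqrt{t_j^2-t^2}}{\pi^{3/2}y}\Bigr)^{z}=\Bigl(\tfrac{\sqrt{T^2-t^2}}{\pi^{3/2}y}\Bigr)^{z}\Bigl(1+\sum_{i=1}^{M}d_i(z)\,(Gy/T)^{i}+O\bigl(T^{-(M+1)\delta_0+\epsilon}\bigr)\Bigr),
\]
where each $d_i(z)$ is a polynomial in $z$ with coefficients rational and bounded in $t^2/T^2$ (obtained by rearranging the powers of $\tfrac{z}{2}\log(1+w)$, which is a power series in $w$, hence in $Gy/T$). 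Likewise, since $p$ is a fixed rational function bounded together with its derivatives on $|x_1|,|x_2|\ll 1$, a Taylor expansion in $Gy/T$ gives $1+\sum_k p(t/t_j,v/t_j)/t_j^k=\bigl(1+\sum_k p(t/T,v/T)/T^k\bigr)+\sum_{i\ge1}e_i(z)\,(Gy/T)^{i}/T+O(T^{-A})$ with $e_i(z)\ll T^{\epsilon}$.

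Substituting both expansions into \eqref{Vapproximation}, the product of the two zeroth-order terms, integrated against $\bigl(\sqrt{T^2-t^2}/(\pi^{3/2}y)\bigr)^{z}\frac{\Gamma(1/4+it+z/2)}{\Gamma(1/4+it)}\zeta(1+4it+2z)G(t,z)\frac{dz}{z}$, reproduces $V(y,t,T)$ up to $O(T^{-N+\epsilon})$. Every remaining contribution carries a factor $(Gy/T)^{i}$ with $i\ge1$; writing $(Gy/T)^{i}=(Gy/T^{1-\delta_0})^{i}T^{-i\delta_0}$ with $Gy/T^{1-\delta_0}\ll\log^2 T\ll T^{\epsilon}$, absorbing the harmless factor $1+\sum_k p(t/t_j,v/t_j)/t_j^k=1+O(1/T)$ and the extra powers of $1/T$ (each $\le T^{-\delta_0}$) into the coefficients, and collecting these contributions according to the resulting power $T^{-j\delta_0}$, one obtains precisely an integral of the shape \eqref{Vtilde} with $c_j(z)$ polynomials in $z$ of bounded degree, $c_j(z)\ll T^{\epsilon}$ and $c_0(z)\equiv0$. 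Finally, truncating all expansions at order $M$ and taking $M$ large in terms of $N$, $\delta_0$ and the abscissa $a$ (so as to absorb the trivial bound $\bigl(\sqrt{T^2-t^2}/(\pi^{3/2}y)\bigr)^{a}\ll T^{a}$ on the integral) makes every discarded remainder $\ll T^{-N+\epsilon}=O(t_j^{-N+\epsilon})$, which gives \eqref{Vapprox y=0}.

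I do not expect a genuine obstacle here: this is essentially a uniform-in-$z$ Taylor expansion of the $t_j$-dependence of \eqref{Vapproximation}, in the same spirit as the proof of Lemma~\ref{lem: Linf/Linf y}. The two points requiring care are keeping every expansion uniform on the truncated contour, and the bookkeeping of the remainders — in particular, noticing that $V(y,t,T+Gy)-V(y,t,T)$ is itself only a factor $T^{-\delta_0+\epsilon}$ smaller than $V$, so that it cannot be absorbed into the error term but must be retained as the explicit term $\tilde V$, and correspondingly enough terms of the expansion must be kept. The substantive analytic difficulties of the paper lie elsewhere, in the uniform asymptotics of the Gauss hypergeometric function $\HyG$.
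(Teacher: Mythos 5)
Your proposal is correct and follows essentially the same route as the paper: truncate the $z$-contour (the paper uses $|\Im z|\ll T^{\epsilon}$, you use $|\Im z|\le\log^2 T$ — both work because of the $\exp(z^2)$ factor), then Taylor-expand the $t_j$-dependent factor via $(t_j^2-t^2)^{z/2}=(T^2-t^2)^{z/2}\left(1+\tfrac{2TGy+G^2y^2}{T^2-t^2}\right)^{z/2}$ in the small quantity $w=(2TGy+G^2y^2)/(T^2-t^2)\ll T^{-\delta_0+\epsilon}$. The paper's one-line proof leaves the expansion of the $p$-correction and the bookkeeping of remainders implicit; your version spells these out, which is consistent with the intended argument and introduces no new issues.
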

\begin{proof}
First, we truncate the integral in \eqref{Vapproximation} to the region $|\Im{z}|\ll T^{\epsilon}$ producing a negligibly small error term. Now the statement of the lemma follows from the formula
\begin{equation}
\left(t_j^2-t^2\right)^z=\left(T^2-t^2+2TGy+G^2y^2\right)^z=\left(T^2-t^2\right)^z\left(1+\frac{2TGy+G^2y^2}{T^2-t^2}\right)^z
\end{equation}
after expanding the last bracket in the Taylor series.
\end{proof}
According to \cite[Eq. 3.323.2,Eq. 3.462.2]{GR} we have
\begin{equation}\label{exp integral1}
\int_{-\infty}^{\infty}\exp(-p^2y^2+qy)dy=\frac{\pi^{1/2}}{p}\exp\left(\frac{q^2}{4p^2}\right),
\quad\hbox{if}\quad\Re(p^2)>0,
\end{equation}
\begin{equation}\label{exp integral2}
\int_{-\infty}^{\infty}x^n\exp(-y^2+qy)dy=P_n(q)\exp\left(\frac{q^2}{4}\right),
\end{equation}
where $n$ is a nonnegative integer and $P_n(q)$ is a polynomial of degree $n.$

For $\Re{s}>1$ we define the Zagier $L$-series by
\begin{equation}\label{Lbyk}
\Zag_{n}(s)=\frac{\zeta(2s)}{\zeta(s)}\sum_{q=1}^{\infty}\frac{\rho_q(n)}{q^{s}},\quad
\rho_q(n):=\#\{x\Mod{2q}:x^2\equiv n\Mod{4q}\}.
\end{equation}

Zagier \cite[Proposition 3]{Z} proved a  meromorphic continuation of  \eqref{Lbyk} to the whole complex plane and a functional equation for $\Zag_{n}(s)$ relating the values $s$ and $1-s$.

Another useful property is that for $n=Dl^2$ with $D$ fundamental discriminant one has
\begin{equation}\label{ldecomp}
\Zag_{n}(s)=l^{1/2-s}T_{l}^{(D)}(s)L(s,\chi_D),
\end{equation}
where $L(s,\chi_D)$ is a Dirichlet $L$-function for primitive quadratic character $\chi_D$ and
\begin{equation}\label{eq:td}
T_{l}^{(D)}(s)=\sum_{l_1l_2=l}\chi_D(l_1)\frac{\mu(l_1)}{\sqrt{l_1}}\tau_{s-1/2}(l_2).
\end{equation}

To estimate various integrals  we will use the following lemma (see \cite[Lemma 8.1]{BKY} and \cite[Lemma A1]{AHLQ}).
\begin{lem}\label{Lemma BKY}
Suppose that there are parameters $P,R,V,X,Y>0$ such that
\begin{equation}\label{BKYconditions}
|f'(x)|\gg R,\quad
f^{(i)}(x)\ll\frac{Y}{P^i},\quad
g^{(j)}(x)\ll\frac{X}{V^j}
\end{equation}
for $i\ge2,j\ge0$. Then
\begin{equation}\label{I BKY est}
I=\int_a^{b}g(x)e(f(x))dx\ll(b-a)X\left(\frac{1}{RV}+\frac{1}{RP}+\frac{Y}{R^2P^2}\right)^{A}.
\end{equation}
\end{lem}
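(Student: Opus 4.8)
The plan is to integrate by parts $A$ times against the operator that exploits the lower bound $|f'(x)|\gg R$. Write $e(f(x))=\frac{1}{2\pi i f'(x)}\,\frac{d}{dx}e(f(x))$ and define the differential operator $\mathcal{D}(h)=\frac{1}{2\pi i}\,\frac{d}{dx}\!\left(\frac{h}{f'}\right)$, so that $\int_a^b g\,e(f)\,dx=\int_a^b \mathcal{D}^A(g)\,e(f)\,dx$ up to boundary terms; with no decay assumed at the endpoints one keeps the interval closed and absorbs the boundary contributions into the same kind of estimate. The main step is then to show, by induction on $k$, that $\mathcal{D}^k(g)$ is a sum of $O_k(1)$ terms, each a product of $g^{(j)}$ (with $j\le k$) divided by a power of $f'$ and multiplied by products of higher derivatives $f^{(i)}$, $i\ge 2$. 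Concretely one proves the bound
\begin{equation}
\mathcal{D}^k(g)(x)\ll_k X\left(\frac{1}{RV}+\frac{1}{RP}+\frac{Y}{R^2P^2}\right)^{k}.
\end{equation}

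To see why each application of $\mathcal D$ produces exactly the factor $\frac{1}{RV}+\frac{1}{RP}+\frac{Y}{R^2P^2}$, note that $\mathcal D(h)=\frac{1}{2\pi i}\left(\frac{h'}{f'}-\frac{h f''}{(f')^2}\right)$. The first term costs a derivative on $h$; when $h$ already has the shape $g^{(j)}\cdot(\text{stuff})/(f')^m$, differentiating $g^{(j)}$ gains a factor $1/V$ (since $g^{(j)}\ll X/V^j$), differentiating one of the accumulated $f^{(i)}$ factors gains $1/P$ (since $f^{(i+1)}\ll Y/P^{i+1}$ contributes an extra $1/P$ relative to $f^{(i)}\ll Y/P^i$), and differentiating a power of $1/f'$ produces an extra $f''/(f')^2\ll (Y/P^2)/R^2$. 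The second term, $-hf''/(f')^2$, contributes precisely the factor $\frac{Y}{R^2P^2}$. Each of these three possibilities is dominated by the claimed combination $\frac{1}{RV}+\frac{1}{RP}+\frac{Y}{R^2P^2}$ (the leftover $1/R$ from the single division by $f'$ in the $h'/f'$ branch is what makes $1/V$ into $1/(RV)$ and $1/P$ into $1/(RP)$), so the induction closes cleanly. Summing the $O_k(1)$ such terms and integrating the trivial bound over $[a,b]$ gives $I\ll (b-a)\,X\left(\frac{1}{RV}+\frac{1}{RP}+\frac{Y}{R^2P^2}\right)^{A}$ after taking $k=A$.

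The main obstacle is purely bookkeeping: one must track carefully that after $k$ steps the denominator is never worse than $(f')^{2k}$ and that the number of terms stays bounded in terms of $k$ only (not in terms of the parameters), and one must handle the boundary terms from the repeated integration by parts — each boundary term is of the form $\left[\mathcal D^{k}(g)/(2\pi i f')\cdot e(f)\right]_a^b$ and is bounded by the same expression without the factor $(b-a)$, hence is absorbed. Since the statement only asks for the bound with an unspecified exponent $A$ (and $\ll$ allows implied constants depending on $A$), no optimization of constants is needed, and one may simply cite the standard references \cite[Lemma 8.1]{BKY}, \cite[Lemma A1]{AHLQ} after indicating this argument.
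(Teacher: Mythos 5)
The paper does not prove this lemma itself; it is quoted directly from \cite[Lemma 8.1]{BKY} and \cite[Lemma A1]{AHLQ}. Your argument is the standard repeated integration-by-parts proof that underlies those references: the inductive bookkeeping that $\mathcal{D}^k(g)$ is an $O_k(1)$-term sum of expressions $g^{(j)}\prod_\ell f^{(i_\ell)}/(f')^m$, and the accounting showing that each application of $\mathcal{D}$ costs at most a factor $\frac{1}{RV}+\frac{1}{RP}+\frac{Y}{R^2P^2}$ (the $1/R$ from the explicit division by $f'$ combining with the $1/V$ from hitting a $g$-derivative or the $1/P$ from hitting an $f$-derivative, plus the $f''/(f')^2\ll Y/(R^2P^2)$ from the other branch), are both correct.

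The one place your write-up is not quite right is the treatment of the boundary terms. The boundary term produced at the $(k+1)$-st step is $\bigl[\mathcal{D}^{k}(g)\,e(f)/(2\pi i f')\bigr]_a^b \ll X E^{k}/R$ with $E=\frac{1}{RV}+\frac{1}{RP}+\frac{Y}{R^2P^2}$, and there is no general reason this should be $\ll (b-a)\,X\,E^{A}$; your claim that it is "bounded by the same expression without the $(b-a)$ and hence absorbed" does not hold for arbitrary parameters. The resolution is that the lemma, as stated in \cite{BKY} and \cite{AHLQ}, assumes $g$ is smooth and compactly supported on $[a,b]$ (equivalently, the integral is over $\mathbb{R}$ with $g$ of compact support), so that all boundary contributions vanish identically; the paper's restatement silently drops this hypothesis, but it is satisfied in every application in the paper since the amplitude is always a bump function $U(\cdot)$. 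You should either add the compact-support hypothesis explicitly or integrate over $\mathbb{R}$, rather than appeal to absorption of the boundary terms.
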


For the sake of simplicity, let us consider the following conventions. Quite often in this paper we study functions of several variables and parameters and obtain uniform asymptotic expansions  with respect to one of the parameters, say
\begin{equation}
f(x_1,\ldots,x_k, T)=\sum_{j=0}^{n}\frac{c(x_1,\ldots,x_k, T)}{T^j}+O(T^{-n-1}).
\end{equation}
In this case, we usually only consider the contribution of the main term $c(x_1,\ldots,x_k, T)$. Further, if we are not interested in dependence on some variables, we use the following notation:
\begin{equation}
f(x_1,\ldots,x_k, T)\sim c(x_{i_1},\ldots,x_{i_l}).
\end{equation}

\section{Voronoi's summation formula for $\Zag_n(1/2+2it)$}\label{sec:Voronoi it}
The arithmetic structure of  Voronoi's formula for  $\Zag_n(1/2+2it)$ is the same as for the central values $\Zag_n(1/2)$. As explained in \cite{BFsym2}, it is sufficient to consider
only the case of $c\equiv0\pmod{4}$  as all other cases can be treated similarly. Therefore, we provide detailed computations only for $c\equiv0\pmod{4}$ in order to demonstrate required changes in the analysis of integral transforms.

Following Dunster \cite[(3.4a), (3.4b)]{Dun90Bessel}, we define
\begin{equation}\label{FG Dun def}
F_{\mu}(z):=\frac{J_{\mu}(z)+J_{-\mu}(z)}{2\cos(\pi\mu/2)},\quad
G_{\mu}(z):=\frac{J_{\mu}(z)-J_{-\mu}(z)}{2\sin(\pi\mu/2)}.
\end{equation}
Let $L_{\mu}(z)$ be either $F_{\mu}(z)$ or $G_{\mu}(z)$. The function $\omega_L(z)=L_{2it}(2tz)\sqrt{z}$  satisfies the differential equation (see \cite[(5.1)]{Dun90Bessel}):
\begin{equation}\label{FG Dun difeq}
\frac{d^2\omega_L}{dz^2}=-\frac{1+16t^2(1+z^2)}{4z^2}\omega_L(z).
\end{equation}
The following uniform asymptotic formulas take place (see  \cite[(5.15),(5.16)]{Dun90Bessel}):
\begin{equation}\label{GF2tz asympt}
L_{2it}(z)=\frac{1}{\sqrt{\pi t}(1+z^2)^{1/4}}\sum_{\pm}e^{\pm2it\xi(z)}E_{\pm}(t,(1+z^2)^{-1/2}),
\end{equation}
where
\begin{equation}\label{GF2tz xi def}
\xi(v)=\sqrt{1+v^2}+\log\frac{v}{1+\sqrt{1+v^2}},\quad
\xi'(v)=\frac{\sqrt{1+v^2}}{v},
\end{equation}
and the non-oscillatory functions $E_{\pm}(t,p)$ have the Poincare type asymptotic expansion (one can express these functions into a series with decreasing powers of $t$ and with an error that is smaller than the last term of the expansion).

Similar results hold for the $K$-Bessel function. The function $\omega_K(z)=K_{2it}(2tz)\sqrt{z}$  satisfies the differential equation (see \cite[(4.1)]{Dun90Bessel}):
\begin{equation}\label{K Dun difeq}
\frac{d^2\omega_K}{dz^2}=\frac{1+16t^2(z^2-1)}{4z^2}\omega_K(z).
\end{equation}
For $1-z\gg t^{-2/3+\epsilon}$ the following uniform asymptotic formula takes place (see \cite{Balogh}, \cite[sec.4]{Dun90Bessel}):
\begin{multline}\label{K2tz asympt}
e^{\pi t}K_{2it}(2tz)=\frac{\sqrt{\pi}}{\sqrt{t}(1-z^2)^{1/4}}
\Bigl(
\cos(2t\eta(z)-\frac{\pi}{4})\sum_{s=0}^n\frac{U_{2s}((1-z^2)^{-1/2})}{(2t)^{2s}}+\\+
\sin(2t\eta(z)-\frac{\pi}{4})\sum_{s=0}^{n-1}\frac{U_{2s+1}((1-z^2)^{-1/2})}{(2t)^{2s+1}}
+O\left(\frac{1}{(t(1-z^2)^{1/2})^{2n+1}}\right)
\Bigr),
\end{multline}
where $U_j(x)=O(x^j)$ and
\begin{equation}\label{K2tz eta def}
\eta(v)=-\sqrt{1-v^2}+\log\frac{1+\sqrt{1-v^2}}{v},\quad
\eta'(v)=-\frac{\sqrt{1-v^2}}{v}.
\end{equation}
In the remaining ranges of $z$ we apply the results of \cite{BST}. More precisely, for $z-1\gg t^{-2/3+\epsilon}$
\begin{equation}\label{K2tz z>1 est}
e^{\pi t}K_{2it}(2tz)\ll (zt)^{-A}.
\end{equation}
For $|x-2t|\ll t^{1/3+\epsilon}$ one has
\begin{equation}\label{K2tz z=1 est}
e^{\pi t}K_{2it}(x)\ll \frac{1}{t^{1/3}},\quad
e^{\pi t}\frac{\partial^j}{\partial x^j}K_{2it}(x)\ll\frac{(t^{2/3}/x)^j}{t^{1/3}}.
\end{equation}

In the above settings, for $k=1/2$ and $\rho=it$ the functions $\Phi^{(\pm,\pm)}(z)$ ( see \cite[(1.9)]{BFVoron}) can be written in the following form :
\begin{equation}\label{Phi++ itdef}
\Phi^{(+,+)}(z)=\frac{\sqrt{z}}{\sqrt{2}}\left(F_{2it}(2\sqrt{z})-G_{2it}(2\sqrt{z})
\right),
\end{equation}
\begin{equation}\label{Phi-- itdef}
\Phi^{(-,-)}(z)=\frac{-\sqrt{z}}{\sqrt{2}}\left(F_{2it}(2\sqrt{z})+G_{2it}(2\sqrt{z})
\right).
\end{equation}

We also modify slightly the definition of $\Phi^{(\pm,\mp)}(z)$ (see \cite[(1.10)]{BFVoron}) putting
\begin{equation}\label{Phi-+ itdef}
\Phi^{(-,+)}(z):=\frac{2\sqrt{z}K_{2it}(2\sqrt{z})}{\Gamma(1/4+it)\Gamma(3/4-it)},
\end{equation}
\begin{equation}\label{Phi+- itdef}
\Phi^{(+,-)}(z):=\frac{2\sqrt{z}K_{2it}(2\sqrt{z})}{\Gamma(1/4-it)\Gamma(3/4+it)}.
\end{equation}
To formulate the Voronoi formula it is also required to define the theta multiplier (see \cite[p.2380]{Strom}):
\begin{equation}\label{ThetaMultiplier def}
 \TM(M):=\bar{\epsilon}_{d}\left(\frac{c}{d}\right),
\end{equation}
where the symbol $\left(\frac{c}{d}\right)$ is given on \cite[p.442]{Shimura} (for positive odd  $d$  it coincides with a classical Jacobi  symbol) and $\epsilon_{d}=1$ if $d\equiv1\pmod 4$ and $\epsilon_{d}=i$ if $d\equiv-1\pmod 4$.

\begin{lem}\label{Thm Voronoi+ it c0mod4}
For $c\equiv0\pmod{4}$ and $ad\equiv1\pmod{4}$ one has
\begin{multline}\label{Voronoi it n>0 c=0 mod 4}
\sum_{n>0}\phi(n)e\left( \frac{an}{c}\right)\frac{\Zag_n(1/2+2it)}{n^{1/2-it}}=
\TM(M_0)e(1/8)\sqrt{2}\Res_{+}(c,t)+\\+
\TM(M_0)e(1/8)\sum_{n\neq0}\widehat{\phi}^{+}\left(\frac{4\pi^2n}{c^2}\right)e\left( -\frac{dn}{c}\right)
\frac{\Zag_n(1/2+2it)}{|n|^{1/2-it}},
\end{multline}
\begin{multline}\label{Voronoi it n<0 c=0 mod 4}
\sum_{n<0}\phi(n)e\left( \frac{an}{c}\right)\frac{\Zag_n(1/2+2it)}{|n|^{1/2-it}}=
\TM(M_0)e(1/8)\sqrt{2}\Res_{-}(c,t)+\\+
\TM(M_0)e(1/8)\sum_{n\neq0}\widehat{\phi}^{-}\left(\frac{4\pi^2n}{c^2}\right)e\left( -\frac{dn}{c}\right)
\frac{\Zag_n(1/2+2it)}{|n|^{1/2-it}},
\end{multline}
where
\begin{equation}\label{Res+ c=0(4)}
\Res_{\pm}(c,t)=
\frac{2^{4it}\phi^{\pm}(1/2+it)}{c^{1+2it}}\zeta(1+4it)+
\frac{\pi^{2it}\phi^{\pm}(1/2-it)}{c^{1-2it}}\zeta(1-4it)\frac{\Gamma\left(\frac{2\pm1}{4}+it\right)\Gamma(1/2-2it)}{\Gamma\left(\frac{2\pm1}{4}-it\right)\Gamma(1/2+2it)},
\end{equation}
\begin{equation}\label{phi+ transform def}
\phi^{+}(s)=\int_{0}^{\infty}\phi(\pm x)x^{s-1}dx,\quad
\widehat{\phi}^{+}(\pm y)=\int_{0}^{\infty}\frac{\phi(x)}{x}\Phi^{(\pm,+)}(xy)dx,\quad
\end{equation}
\begin{equation}\label{phi- transform def}
\phi^{-}(s)=\int_{0}^{\infty}\phi(-x)x^{s-1}dx,\quad
\widehat{\phi}^{-}(\pm y)=\int_{0}^{\infty}\frac{\phi(-x)}{x}\Phi^{(\pm,-)}(xy)dx.
\end{equation}
\end{lem}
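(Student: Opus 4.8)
The plan is to derive the Voronoi-type identity for the Zagier $L$-series $\Zag_n(1/2+2it)$ by specializing the general Voronoi summation formula of \cite{BFVoron} to the weight $k=1/2$, spectral parameter $\rho=it$, modulus $c\equiv0\pmod 4$, and then identifying all the resulting special functions and residual terms explicitly. First I would recall that the Zagier $L$-series $\Zag_n(s)$ is, up to the explicit normalizing factors in \eqref{ldecomp}--\eqref{eq:td}, built from half-integral weight Eisenstein-type coefficients, so that the abstract functional equation and summation formula of \cite{BFVoron} applies verbatim with the theta multiplier \eqref{ThetaMultiplier def}. Thus the structural shape of \eqref{Voronoi it n>0 c=0 mod 4} and \eqref{Voronoi it n<0 c=0 mod 4} — a residual term $\Res_{\pm}(c,t)$ coming from the poles of the relevant Dirichlet series at $s=1\pm 2it$ (equivalently the poles of $\zeta(2s)/\zeta(s)$ and of the Mellin transform $\phi^{\pm}$ at $s=1/2\pm it$), plus a dual sum against the integral kernels $\Phi^{(\pm,\pm)}$ — is inherited directly from that reference. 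The separate treatment of $n>0$ and $n<0$ corresponds, as usual, to the two archimedean components of the half-integral weight Whittaker functions, which is why the kernels $\Phi^{(+,+)},\Phi^{(-,-)}$ (built from $F_{2it},G_{2it}$, i.e. $J$-Bessel type, appropriate for $n>0$) and $\Phi^{(-,+)},\Phi^{(+,-)}$ (built from $K_{2it}$, appropriate for $n<0$ in the dual) appear as in \eqref{Phi++ itdef}--\eqref{Phi+- itdef}.

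The main computational content is then twofold. First, one must verify that the abstract archimedean factors in \cite[(1.9),(1.10)]{BFVoron}, when evaluated at $k=1/2$, $\rho=it$, reduce precisely to the combinations \eqref{Phi++ itdef}--\eqref{Phi+- itdef}; this is where the slight modification of $\Phi^{(\pm,\mp)}$ by the Gamma factors $\Gamma(1/4\pm it)\Gamma(3/4\mp it)$ enters — it is chosen so that the constant $\TM(M_0)e(1/8)$ in front of the dual sum is clean and matches the reflection in the functional equation. I would carry this out by writing $F_{2it},G_{2it}$ in terms of $J_{\pm 2it}$ via \eqref{FG Dun def}, tracking the $\cos(\pi it),\sin(\pi it)$ denominators, and matching against the Barnes-integral definition of the kernels in \cite{BFVoron}; the reflection formula $\Gamma(1/4+it)\Gamma(3/4-it)=\pi/\cos(\pi(1/4+it))\cdot(\ldots)$ type identities make the denominators cancel correctly. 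Second, one must compute the residue term: the sum $\sum_{q}\rho_q(n)q^{-s}$ together with $\zeta(2s)/\zeta(s)$ has, after the Mellin-inversion in the Voronoi machinery, simple poles at $s=1/2\pm it$ (shifted by the $2it$ twist to $1\pm 2it$ in the variable appearing in $\zeta(1\pm 4it)$), and the residues produce exactly the two terms of \eqref{Res+ c=0(4)}, with the ratio of Gamma factors $\Gamma(\tfrac{2\pm1}{4}+it)\Gamma(1/2-2it)/(\Gamma(\tfrac{2\pm1}{4}-it)\Gamma(1/2+2it))$ being the archimedean part of the functional equation evaluated at the pole. The powers $c^{-1\mp 2it}$ and the constants $2^{4it},\pi^{2it}$ come from normalizing the Gauss sum / theta multiplier contribution for $c\equiv0\pmod4$ together with the $n^{-(1/2-it)}$ normalization on the left-hand side.

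The step I expect to be the main obstacle is the careful bookkeeping of the case $c\equiv 0\pmod 4$: the theta multiplier $\TM(M)=\bar\epsilon_d\left(\tfrac{c}{d}\right)$ behaves differently according to the $2$-adic valuation of $c$, and one must pin down the scaling matrix $M_0$ and verify that the Gauss-sum evaluation produces precisely the factor $\TM(M_0)e(1/8)\sqrt2$ on the residue and $\TM(M_0)e(1/8)$ on the dual sum, with the additive character $e(-dn/c)$ and argument $4\pi^2 n/c^2$ in the dual. This is exactly the point flagged in the text as ``sufficient to consider $c\equiv 0\pmod4$'', so the proof reduces to quoting the general formula of \cite{BFVoron}, substituting $k=1/2$, $\rho=it$, and then performing the explicit identification of kernels and residues as above; the remaining cases of $c$ are asserted to be analogous and are not treated. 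Once the kernels \eqref{Phi++ itdef}--\eqref{Phi+- itdef} and the transforms \eqref{phi+ transform def}--\eqref{phi- transform def} are matched to the abstract data, the two displayed identities follow directly from \cite[Theorem in sec.1]{BFVoron} specialized to these parameters.
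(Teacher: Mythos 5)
Your proposal matches the paper's proof, which simply invokes \cite[Theorem 1.2, Corollary 1.5]{BFVoron} at the parameters $k=1/2$, $\rho=it$, $c\equiv0\pmod4$ and performs the straightforward specialization; you have merely spelled out the bookkeeping (identifying the kernels $\Phi^{(\pm,\pm)}$, the residue poles at $s=1/2\pm it$, and the theta-multiplier normalization) that the paper leaves implicit.
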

\begin{proof}
This follows  from \cite[Theorem 1.2, Corollary 1.5]{BFVoron} after some straightforward calculations.
\end{proof}

For $c\equiv0\pmod{4}$ and $ad\equiv1\pmod{4}$,   let
\begin{equation}\label{STM 0 def}
\STM_0(c,n):=\sum_{\substack{a\pmod{c}\\(a,c)=1}}\TM(M_0)e\left( -\frac{dn}{c}\right)=
\sum_{\substack{a\pmod{c}\\(a,c)=1}}\bar{\epsilon}_{d}\left(\frac{c}{d}\right)e\left( -\frac{dn}{c}\right).
\end{equation}
According to \cite[(3.10)]{BFsym2}
\begin{equation}\label{STM0 series2}
\sum_{c\equiv0\pmod{4}}\frac{\STM_0(c,n)}{c^{2s}}=\frac{L^{\ast}(2s-1/2,n)}{\zeta_2(4s-1)}\overline{r_2(\bar{s},n)},
\end{equation}
where the function $r_2(s,n)$ is defined in \cite[(3.6),(3.7)]{PRR}, the series $L^{\ast}(z)$ is defined in \cite[(2.8)]{PRR} and it coincides up to a finite product with a Dirichlet $L$-function. As shown in \cite[Sec.3]{BFsym2}, $L^{\ast}(z)$ and $\Zag_n(z)$ differ  slightly from each other.

Using Heath-Brown's large sieve inequality \cite{HB} (see \cite[(3.1)]{KhYoung} or \cite[(2.28)]{PRR}) one obtains ( see \cite[(3.2)]{KhYoung}, \cite[Theorem 2.9, Remark 2.10]{PRR}) the following upper bounds:
\begin{equation}\label{Last 2mom estimate}
\sum_{n\le N}|L^{\ast}(1/2+it,n)|^2\ll\left(N+\sqrt{N(1+|t|)}\right)\left(N(1+|t|)\right)^{\epsilon},
\end{equation}
\begin{equation}\label{LZag 2mom estimate}
\sum_{n\le N}|\Zag_n(1/2+it)|^2\ll\left(N+\sqrt{N(1+|t|)}\right)\left(N(1+|t|)\right)^{\epsilon}.
\end{equation}


\section{Reciprocity formula for the first twisted moment}\label{sec:formula for 1moment}
Let
\begin{equation}
\M_1(l,\rho;h):=\sum_{j}h(t_j)\alpha_{j}\lambda_{j}(l^2)L(\sym^2 u_{j},\rho),
\end{equation}
where  $h(t)$ is a weight function satisfying the following conditions:
\begin{description}
  \item[C1] $h(r)$ is an even function;
  \item[C2] $h(r)$ is holomorphic in the strip $|\Im(r)|<\Upsilon$ for some $\Upsilon>1/2$;
  \item[C3] $h(r)$ is such that the following estimate $h(r)\ll(1+|r|)^{-2-\epsilon}$ holds in the strip $|\Im(r)|<\Upsilon$, $\Upsilon>1/2$;
  \item[C4] $h(\pm(n+1/2)i)=0$ for $n=0,1,\ldots N-1$, where $N>0$ is a sufficiently large integer.
\end{description}
A standard choice for the function $h(r)$ is
\begin{equation}\label{hN def}
h(T,G,N;r):=q_N(r)\exp\left(-\frac{(r-T)^2}{G^2}\right)+q_N(r)\exp\left(-\frac{(r+T)^2}{G^2}\right),
\end{equation}
where for an arbitrary large integer $N$
\begin{equation}\label{qN def}
q_N(r):=\frac{(r^2+1/4)\ldots(r^2+(N-1/2)^2)}{(r^2+100N^2)^N}.
\end{equation}
Note that $q_N(r)=1+O(1/r^2)$. To simplify the notation, we write $h(r)$ instead of $h(T,G,N;r)$.
For  $x \geq 2$, let
\begin{multline}\label{integralIgeq2}
I(x,\rho;h):=\frac{2^{2-\rho}i}{\pi^{3/2}}\int_{-\infty}^{\infty}\frac{rh(r)}{\cosh(\pi r)}\left(\frac{2}{x}\right)^{2ir}
\frac{\Gamma(1/2-\rho/2+ir)\Gamma(1-\rho/2+ir)}{\Gamma(1+2ir)}\\ \times \sin\left( \pi(\rho/2-ir)\right) {}_2F_{1}\left(1/2-\rho/2+ir,1-\rho/2+ir,1+2ir;\frac{4}{x^2} \right)dr,
\end{multline}
\begin{multline}\label{integralIeq2}
I(2,\rho;h):=\Gamma(\rho-1/2)
\frac{2^{2-\rho}i}{\pi^{3/2}}\int_{-\infty}^{\infty}\frac{rh(r)}{\cosh(\pi r)}
\sin\left( \pi(\rho/2-ir)\right)\\ \times \frac{\Gamma(1/2-\rho/2+ir)\Gamma(1-\rho/2+ir)}{\Gamma(\rho/2+ir)\Gamma(1/2+\rho/2+ir)}dr
\end{multline}
and for $0<x<2$
\begin{multline}\label{integralIleq2}
I(x,\rho;h):=\frac{2i}{\pi^{3/2}}\int_{-\infty}^{\infty}\frac{rh(r)}{\cosh(\pi r)}x^{1-\rho}
\frac{\Gamma(1/2-\rho/2+ir)\Gamma(1/2-\rho/2-ir)}{\Gamma(1/2)}\\ \times \cos\left( \pi(\rho/2+ir)\right) {}_2F_{1}\left(1/2-\rho/2+ir,1/2-\rho/2-ir,1/2;\frac{x^2}{4} \right)dr.
\end{multline}
According to \cite[Theorem 5]{Bal} we following explicit formula holds.
\begin{thm}\label{thm rho=1/2 exact} For  any function $h(t)$  satisfying the conditions $(C1)-(C4)$ and $\Re{\rho}=1/2$ we have
\begin{equation}\label{eq:M1lrho=1/2}
M_1(m,\rho;h)=MT(m,\rho;h)+CT(m,\rho;h)+ET(m,\rho;h)+S_1(m,\rho;h)+S_2(m,\rho;h),
\end{equation}
where
\begin{equation}\label{M1 MT}
MT(m,\rho;h)=\frac{\zeta(2\rho)}{\pi^2m^{\rho}}\int_{-\infty}^{\infty}rh(r)\tanh(\pi r)dr+
(2\pi)^{\rho-1}\zeta(2\rho-1)\frac{I(2,\rho;h)}{(2m)^{1-\rho}},
\end{equation}
\begin{equation}\label{M1 CT}
CT(m,\rho;h)=-\frac{\zeta(\rho)}{\pi}\int_{-\infty}^{\infty}\frac{\tau_{ir}(m^2)\zeta(\rho+2ir)\zeta(\rho-2ir)}{|\zeta(1+2ir)|^2}h(r)dr,
\end{equation}
\begin{multline}\label{M1 ET}
ET(m,\rho;h)=-\frac{2\zeta(2\rho-1)}{\zeta(2-\rho)}\tau_{(1-\rho)/2}(m^2)h\left(\frac{1-\rho}{2i}\right)\\
+\Zag_{-4m^2}(\rho)\frac{2^{1-\rho}i}{(4\pi m)^{1-\rho}\pi}
\int_{-\infty}^{\infty}\frac{rh(r)}{\cosh(\pi r)}\frac{\Gamma(1/2-\rho/2+ir)}{\Gamma(1/2+\rho/2+ir)}dr,
\end{multline}
\begin{equation}\label{M1 S1}
S_1(m,\rho;h)=(2\pi)^{\rho-1}\sum_{n=1}^{2m-1}\frac{1}{n^{1-\rho}}\mathscr{L}_{n^2-4m^2}(\rho)I\left(\frac{n}{m},\rho;h\right),\quad
\end{equation}
\begin{equation}\label{M1 S2}
S_2(m,\rho;h)=(2\pi)^{\rho-1}\sum_{n=2m+1}^{\infty}\frac{1}{n^{1-\rho}}\mathscr{L}_{n^2-4m^2}(\rho)I\left(\frac{n}{m},\rho;h\right).
\end{equation}
\end{thm}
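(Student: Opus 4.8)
The plan is to establish \eqref{eq:M1lrho=1/2} first in the half-plane $\Re\rho>1$, where every series below converges absolutely, and then to continue analytically to the line $\Re\rho=1/2$. For $\Re\rho>1$ one expands the Dirichlet series of the symmetric square, writing $M_1(m,\rho;h)=\zeta(2\rho)\sum_{n\ge1}n^{-\rho}\sum_j\alpha_j h(t_j)\lambda_j(m^2)\lambda_j(n^2)$; the interchange of the $j$- and $n$-summations is justified by absolute convergence together with standard bounds for the harmonic weights $\alpha_j$ and Hecke eigenvalues $\lambda_j$ and the decay $(C3)$ of $h$. To the inner spectral average I would apply the Kuznetsov trace formula with test function $h$ and Hecke arguments $a=m^2$, $b=n^2$. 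This produces three contributions: a diagonal term, present only for $n=m$, which supplies the first term of $MT(m,\rho;h)$ after restoring the $\zeta(2\rho)m^{-\rho}$ factor; a continuous-spectrum term, an $r$-integral of $h(r)$ against Eisenstein Hecke eigenvalues whose $n$-summation produces $\zeta(\rho)\zeta(\rho+2ir)\zeta(\rho-2ir)$ and hence $CT(m,\rho;h)$; and a Kloosterman term $\sum_c c^{-1}\sum_n n^{-\rho}S(m^2,n^2;c)\int_{\mathbb R}\frac{rh(r)}{\cosh(\pi r)}\mathcal J_{2ir}\!\left(\frac{4\pi mn}{c}\right)dr$, with $\mathcal J$ the Bessel kernel of the formula.

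The heart of the argument is the evaluation of this Kloosterman term. For fixed $c$ the sum $S(m^2,n^2;c)$ depends on $n$ only through $n\bmod c$, so I would split $n$ into residue classes modulo $c$ and apply Poisson summation in the remaining variable; this replaces the $n$-sum by a sum over dual frequencies $k$, weighted by the complete exponential sum $\sum_{\beta\bmod c}S(m^2,\beta^2;c)e(k\beta/c)$ and by the Fourier transform of $x^{-\rho}\int_{\mathbb R}\frac{rh(r)}{\cosh(\pi r)}\mathcal J_{2ir}(4\pi mx/c)\,dr$. Opening the Kloosterman sum and carrying out the inner quadratic Gauss sum evaluates the complete sum, up to elementary $2$-adic factors, as $c\left(\frac{k^2-4m^2}{c}\right)$, the discriminant $k^2-4m^2=(k-2m)(k+2m)$ being that of the quadratic form implicit in the completed sum. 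Hence the $c$-series collapses to $\sum_c c^{-\rho}\left(\frac{k^2-4m^2}{c}\right)(\cdots)$, which I would match, using the factorisation \eqref{ldecomp}--\eqref{eq:td} and bookkeeping of the divisor and $\zeta$-normalisations, to $(2\pi)^{\rho-1}|k|^{\rho-1}\Zag_{k^2-4m^2}(\rho)$. Meanwhile the residual scale-invariant integral is the classical transform $\int_0^\infty y^{-\rho}\mathcal J_{2ir}(y)\,e\!\left(\tfrac{ky}{4\pi m}\right)dy$ of a Bessel function against a power and an exponential, which equals a ${}_2F_1$ with argument $(k/2m)^2$ if $|k|<2m$ and $(2m/k)^2$ if $|k|>2m$; renaming $k$ as $n$ and setting $x=n/m$ recovers exactly \eqref{integralIleq2} in the first case and \eqref{integralIgeq2} in the second. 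This is why \eqref{eq:M1lrho=1/2} splits into $S_1(m,\rho;h)$ (the range $n<2m$, i.e. $x<2$, negative discriminant) and $S_2(m,\rho;h)$ (the range $n>2m$); the degenerate frequency $n=2m$, of vanishing discriminant, produces instead the $\Gamma$-function expression $I(2,\rho;h)$ of \eqref{integralIeq2}, that is, the second term of $MT(m,\rho;h)$, with $\Zag_0(\rho)\sim\zeta(2\rho-1)$ supplying the $\zeta$-factor.

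It remains to carry the identity, so far valid for $\Re\rho>1$, down to $\Re\rho=1/2$. The only non-entire ingredients are the $\zeta$- and $\Gamma$-factors inside $CT$, inside $\Zag_{n^2-4m^2}(\rho)$, and inside $I(x,\rho;h)$, so one continues term-by-term and records the residues swept past. These reassemble into $ET(m,\rho;h)$: the term $-\frac{2\zeta(2\rho-1)}{\zeta(2-\rho)}\tau_{(1-\rho)/2}(m^2)h\!\left(\frac{1-\rho}{2i}\right)$ arises from the poles of $\zeta(\rho\pm2ir)$ in the continuous-spectrum integrand crossing the real $r$-axis (here the evenness $(C1)$ of $h$ and the relation $\overline\rho=1-\rho$ on the line let one write the residue in the stated form, while $(C2)$ gives meaning to $h$ at a point off the real axis), and the $\Zag_{-4m^2}(\rho)$-term is simply the zero-frequency ($k=0$) contribution of the Poisson summation --- the analogue of the $n=2m$ case on the $K$-Bessel side, hence carrying the elementary integral $\int_{\mathbb R}\frac{rh(r)}{\cosh(\pi r)}\frac{\Gamma(1/2-\rho/2+ir)}{\Gamma(1/2+\rho/2+ir)}\,dr$ in place of a hypergeometric one. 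Condition $(C4)$ is used to cancel the $\Gamma$-poles of the Bessel transforms so that the continued right-hand side is genuinely holomorphic across $\Re\rho=1/2$, and $(C3)$ to justify the contour shifts, the interchanges of summation, and the truncate-then-continue procedure.

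\textbf{The main obstacle} is the exact evaluation in the second paragraph: proving that the Poisson-transformed Kloosterman term equals \emph{precisely} $(2\pi)^{\rho-1}\sum_n n^{\rho-1}\Zag_{n^2-4m^2}(\rho)\,I(n/m,\rho;h)$, with every constant correct. On the arithmetic side this demands a careful treatment of the quadratic Gauss sums at the prime $2$ and of the theta-multiplier subtleties already visible in \eqref{ThetaMultiplier def}, together with a matching of Euler products against \eqref{ldecomp}; on the analytic side it demands the exact reduction of the Bessel transform to the hypergeometric integrals \eqref{integralIgeq2}--\eqref{integralIleq2}, which means pushing a sizeable number of $\Gamma$-factors through Stirling's formula \eqref{Stirling2} and standard Mellin--Barnes identities. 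A secondary difficulty is that the $n$-series converges only for $\Re\rho>1$, so the analytic continuation must be carried out with explicit control of the tails and of the residues crossed, which is precisely what conditions $(C3)$ and $(C4)$ are designed to provide.
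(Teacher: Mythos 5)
The paper itself does not prove this theorem: the entire proof in the source is a single sentence attributing the result to \cite[Theorem 5]{Bal}, and the full derivation lives in that reference, not here. So there is no internal argument in the paper against which to check you. That said, your reconstruction is consistent with the structure of the formula and with the standard route to such reciprocity identities, and you have correctly mapped each term on the right of \eqref{eq:M1lrho=1/2} to its source: the $n=m$ diagonal of Kuznetsov for the first piece of $MT$; the Ramanujan-type Dirichlet series $\sum_n\tau_{ir}(n^2)n^{-\rho}=\zeta(\rho)\zeta(\rho+2ir)\zeta(\rho-2ir)/\zeta(2\rho)$ for $CT$; Poisson summation in $n$ of the Kloosterman term followed by evaluation of the quadratic Gauss sums for $S_1$ and $S_2$, with the degenerate dual frequency $n=2m$ (discriminant $0$, $\Zag_0(\rho)=\zeta(2\rho-1)$) producing the second piece of $MT$ and the dual frequency $n=0$ (discriminant $-4m^2$) producing the second piece of $ET$; and the residues of $\zeta(\rho\pm2ir)$ picked up in continuing from $\Re\rho>1$ down to the critical line, symmetrised by $(C1)$ and evaluated via $(C2)$ at $r=\frac{1-\rho}{2i}$, for the first piece of $ET$. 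You are also right that what you flag as unverified --- the exact arithmetic at the prime $2$, the matching of the resulting character sums against $\rho_q(n)$ through \eqref{Lbyk} and \eqref{ldecomp}--\eqref{eq:td}, and the exact Bessel-to-${}_2F_1$ identities reproducing the weights \eqref{integralIgeq2}, \eqref{integralIeq2}, \eqref{integralIleq2} --- is precisely where all the real work lies, so as written this is an outline rather than a proof. Since the paper does not reproduce those computations either, the two treatments differ only in that the paper defers them wholesale to \cite{Bal} while you sketch the architecture; to put your argument on the same footing you would either need to carry the Gauss-sum and Mellin--Barnes computations through in full or, as the paper does, simply invoke \cite[Theorem 5]{Bal}.
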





\section{Asymptotic formulas for hypergeometric functions}\label{sec:asympt for 2F1}
For the hypergeometric function appearing in \eqref{integralIgeq2} the following asymptotic formula holds (see \cite{BF2F1 I}).
\begin{prop}\label{prop: 2F1 asympt1}
For $0<z<1,$ $r\to\infty$ and $r^{-1+\delta}\ll\alt\ll r^{-\delta}$ one has
\begin{multline}\label{2F1 main asymptI}
\HyG\left(1/4+ir(1-\alt),3/4+ir(1-\alt),1+2ir;z\right)=\\=
\frac{\exp\left(2ir\Lo(\alt,z)\right)}{(1-(1-\alt^2)z)^{1/4}}
\left(1+\sum_{j=1}^{N}\frac{c_j(\alt,r)}{(\alt r)^j}\right)+O((\alt r)^{-N-1}),
\end{multline}
where
\begin{equation}\label{2F1 main asymptI Lo func}
\Lo(\alt,z)=\log2-\alt\log(1+\alt)-\log(1+\sqrt{1-(1-\alt^2)z})+\alt\log(\alt+\sqrt{1-(1-\alt^2)z}).
\end{equation}
\end{prop}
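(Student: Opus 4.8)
The plan is to establish the uniform asymptotic expansion \eqref{2F1 main asymptI} by applying a version of the saddle point method (in Temme's form) to a suitable integral representation of the hypergeometric function ${}_2F_1(1/4+ir(1-\alt),3/4+ir(1-\alt),1+2ir;z)$. First I would record that the three parameters $a=1/4+ir(1-\alt)$, $b=3/4+ir(1-\alt)$, $c=1+2ir$ all have imaginary part of order $r$, and crucially $c-a-b = \tfrac14\cdot\text{(bounded)} + 2ir\alt$, so the relevant ``large'' quantity governing the expansion is $\alt r$ rather than $r$ itself; this is why the error terms and the correction series are organized in powers of $(\alt r)^{-1}$. The restriction $r^{-1+\delta}\ll\alt\ll r^{-\delta}$ ensures both that $\alt r\to\infty$ (so the expansion is meaningful) and that $\alt$ stays away from $1$ (so the quantity $1-(1-\alt^2)z$ appearing in the phase and amplitude is bounded away from $0$ and from $\infty$ for $0<z<1$).

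Concretely, I would start from Euler's integral representation
\begin{equation}
{}_2F_1(a,b,c;z) = \frac{\Gamma(c)}{\Gamma(b)\Gamma(c-b)}\int_0^1 u^{b-1}(1-u)^{c-b-1}(1-zu)^{-a}\,du,
\end{equation}
valid since $\Re c > \Re b > 0$ here. Writing the integrand as $e^{r\,\Psi(u)}\cdot(\text{slowly varying})$ with
\begin{equation}
\Psi(u) = i(1-\alt)\log u + 2i\log(1-u) - i(1-\alt)\log(1-zu) + (\text{lower order}),
\end{equation}
I would locate the saddle points by solving $\Psi'(u)=0$, i.e. a quadratic in $u$; one checks that the relevant saddle $u_0=u_0(\alt,z)$ is a smooth function of $\alt,z$ on the parameter range, and that the value $\Psi(u_0)$ reproduces exactly $2i\Lo(\alt,z)$ with $\Lo$ as in \eqref{2F1 main asymptI Lo func} after simplification (matching the $\log 2$, $\alt\log(1+\alt)$ and the two terms involving $\sqrt{1-(1-\alt^2)z}$). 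The Gaussian curvature factor $\Psi''(u_0)^{-1/2}$, combined with the Stirling asymptotics \eqref{Stirling2} for $\Gamma(c)/(\Gamma(b)\Gamma(c-b))$, should collapse to the amplitude $(1-(1-\alt^2)z)^{-1/4}$; the higher terms of Watson's lemma / Temme's uniform stationary phase expansion then produce the asymptotic series $\sum_j c_j(\alt,r)(\alt r)^{-j}$ with coefficients bounded uniformly in the stated range. Since this particular expansion is proved in detail in \cite{BF2F1 I}, here one only needs to verify that the hypotheses of that reference are met and to match notation.

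The main obstacle, and the reason this is delicate rather than routine, is \emph{uniformity} in all three variables $x$ (equivalently $z$), $r$, and $\alt$ simultaneously — in particular tracking the behavior as $\alt r$ is merely large (not comparable to $r$), so that the ``small parameter'' of the saddle point method degenerates relative to the naive scaling. One must check that the saddle point $u_0$ stays bounded away from the endpoints $0,1$ and from the singularity $1/z$ of the integrand uniformly, that no coalescence of saddles occurs (which would force an Airy-type rather than Gaussian local model), and that the error term genuinely decays like $(\alt r)^{-N-1}$ rather than $r^{-N-1}$, which is a strictly weaker statement and is exactly what is needed downstream in the analysis of $I(x,\rho;h)$. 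Verifying the absence of saddle coalescence on the whole range $0<z<1$, $r^{-1+\delta}\ll\alt\ll r^{-\delta}$ — i.e. that $\Psi''(u_0)$ does not vanish — is the technical heart of the argument; all subsequent steps are bookkeeping with Stirling's formula and Watson's lemma.
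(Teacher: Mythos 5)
The paper itself contains no proof of this proposition; it is stated as imported from \cite{BF2F1 I}, and the only methodological hint is in the introduction, which says that for this hypergeometric function a version of Temme's saddle point method is used (as opposed to the Liouville--Green/Airy approach used for the other hypergeometric in \cite{BF2F1 II}). Your high-level plan — Euler's integral representation plus Temme's uniform saddle point analysis, with the large parameter being $\alt r$ rather than $r$ — matches the approach the paper describes, and your remark that the ``$\log 2$, $\alt\log(1+\alt)$'' and $\sqrt{1-(1-\alt^2)z}$ terms in $\Lo$ should emerge from the saddle value and Stirling normalization is on target: the saddle equation is $(1+\alt)zu^2-2u+(1-\alt)=0$, whose roots $u_\pm=\bigl(1\pm\sqrt{1-(1-\alt^2)z}\bigr)/\bigl((1+\alt)z\bigr)$ produce exactly that square root.

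Two concrete issues. First, your phase function is written down incorrectly. From the Euler integrand $u^{b-1}(1-u)^{c-b-1}(1-zu)^{-a}$ with $a=1/4+ir(1-\alt)$, $b=3/4+ir(1-\alt)$, $c=1+2ir$, one has $c-b-1=-3/4+ir(1+\alt)$, so the $(1-u)$-factor contributes $i(1+\alt)\log(1-u)$ to $\Psi(u)$, not $2i\log(1-u)$. The discrepancy $-i(1-\alt)\log(1-u)$ is of the same order as the leading phase, not lower order, and with your $\Psi$ the saddle equation becomes $2zu^2-(3-\alt)u+(1-\alt)=0$, which does not reproduce the correct $\sqrt{1-(1-\alt^2)z}$ structure of $\Lo$.

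Second, your characterization of the ``technical heart'' as verifying that $\Psi''(u_0)$ does not vanish is imprecise in a way that obscures the actual difficulty. One checks easily that $\Psi''$ at the relevant saddle is proportional to $(u_+-u_-)\asymp\sqrt{1-(1-\alt^2)z}$, which is nonzero for $0<z<1$ but becomes as small as $\asymp\alt$ when $z\to1$. Since $z=4m^2/n^2$ ranges up to values within $O(1/m)$ of $1$, this degeneration cannot be avoided; the two saddle points (and the endpoint $u=1$) do approach each other. The effective local large parameter is therefore $r\sqrt{1-(1-\alt^2)z}\gtrsim r\alt$, and \emph{this} is precisely why the expansion in \eqref{2F1 main asymptI} is organized in powers of $(\alt r)^{-1}$ with error $O((\alt r)^{-N-1})$ rather than $r^{-N-1}$. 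A straightforward Gaussian saddle-point estimate does not yield this; Temme's uniform method is designed exactly to control the transition as the saddles become close to each other and to the endpoint, and establishing the uniformity of the resulting expansion in all of $z$, $\alt$, $r$ on the stated range is the genuinely nontrivial content of \cite{BF2F1 I}.
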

To simply the notation, we use the sign $\sim$ to indicate the main term of an asymptotic expansion up to some constants.
\begin{cor}\label{cor: 2F1 asympt1}
For $0<z<1,$  $r=T+Gy$, $|y|\ll \log^2 T$, $G\ll T^{1/2-\epsilon}$ and $\alt_0=t/T\ll T^{-\delta}$, the main term in the asymptotic expansion \eqref{2F1 main asymptI} can be rewritten as
\begin{multline}\label{2F1 main asymptIy}
\HyG\left(1/4+ir(1-\alt),3/4+ir(1-\alt),1+2ir;z\right)\sim
(1-(1-\alt_0^2)z)^{-1/4}\\\times
e^{2iT\left(-\log(1+\sqrt{1-(1-\alt_0^2)z})+\alt_0\log(\alt_0+\sqrt{1-(1-\alt_0^2)z})\right)}
e^{2iGy\left(-\log(1+\sqrt{1-(1-\alt_0^2)z})+\log2\right)}.
\end{multline}

\end{cor}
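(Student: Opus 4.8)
The plan is to start from the main term of Proposition \ref{prop: 2F1 asympt1}, namely
\[
\frac{\exp\left(2ir\Lo(\alt,z)\right)}{(1-(1-\alt^2)z)^{1/4}},
\]
and substitute $r=T+Gy$, together with $\alt=t/r=\alt_0/(1+Gy/T)$, expanding everything as a Taylor series in the small parameter $Gy/T\ll T^{-1/2-\epsilon}$. The prefactor $(1-(1-\alt^2)z)^{-1/4}$ is handled first: since $\alt=\alt_0+O(\alt_0 Gy/T)$ and the map $\al\mapsto(1-(1-\al^2)z)^{-1/4}$ is smooth and bounded for $0<z<1$, $|\al|\ll1$, it equals $(1-(1-\alt_0^2)z)^{-1/4}(1+O(Gy/T))$, and the error is absorbed into the $\sim$ notation.

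The substance is the phase $2ir\Lo(\alt,z)$. Here I would write $2r\Lo(\alt,z)=2r\,\Lo(\alt_0/(1+Gy/T),z)$ and Taylor-expand the function $x\mapsto \Lo(\alt_0/(1+x),z)$ at $x=0$, exactly as is done for $f(\al)$ in the proof of Lemma \ref{lem: Linf/Linf y}. Writing $\Lo(\al,z)=\log2-\al\log(1+\al)-\log(1+w)+\al\log(\al+w)$ with $w=\sqrt{1-(1-\al^2)z}$, one computes $\partial_\al \Lo(\al,z)=-\log(1+\al)+\log(\al+w)$ (the terms from differentiating $w$ and the explicit $\al$'s inside the logarithms cancel by the usual trick, since $\partial_\al[-\log(1+w)+\al\log(\al+w)]$ simplifies using $w\,\partial_\al w=\al z$). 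Hence
\[
2r\Lo(\alt,z)=2r\Lo(\alt_0,z)-2r\cdot\frac{Gy}{T}\cdot\alt_0\,\partial_\al\Lo(\alt_0,z)+O\!\left(r\frac{G^2y^2}{T^2}\right),
\]
and since $r=T+Gy$, the leading piece $2r\Lo(\alt_0,z)=2T\Lo(\alt_0,z)+2Gy\Lo(\alt_0,z)+O(G^2y^2/T)$, while $-2r(Gy/T)\alt_0\partial_\al\Lo(\alt_0,z)=-2Gy\,\alt_0\partial_\al\Lo(\alt_0,z)+O(G^2y^2/T)$. Adding these and using $\Lo(\alt_0,z)-\alt_0\partial_\al\Lo(\alt_0,z)=\log2-\log(1+w_0)$ with $w_0=\sqrt{1-(1-\alt_0^2)z}$ (the $\alt_0\log$ terms cancel between $\Lo$ and $\alt_0\partial_\al\Lo$), one gets the coefficient of $Gy$ equal to $\log2-\log(1+w_0)$, matching \eqref{2F1 main asymptIy}; the coefficient of $T$ is $\Lo(\alt_0,z)=\log2-\alt_0\log(1+\alt_0)-\log(1+w_0)+\alt_0\log(\alt_0+w_0)$, and the $\log2$ and $-\alt_0\log(1+\alt_0)$ terms are constants in $y$, so they can be dropped into the overall constant covered by $\sim$, leaving exactly the displayed $T$-phase.

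The constraint $G\ll T^{1/2-\epsilon}$ enters precisely to kill the error: $r\,G^2y^2/T^2\ll G^2(\log^2 T)^2/T\ll T^{-2\epsilon}(\log T)^4=o(1)$, so $\exp(O(rG^2y^2/T^2))=1+o(1)$ and drops into $\sim$. One also needs $r^{-1+\delta}\ll\alt\ll r^{-\delta}$ to apply Proposition \ref{prop: 2F1 asympt1}: this follows from $\alt\asymp\alt_0=t/T\ll T^{-\delta}$ together with the lower bound on $t$ implicit in the application. The main obstacle—really the only non-routine point—is verifying the cancellations in computing $\partial_\al\Lo(\al,z)$ and in the combination $\Lo(\alt_0,z)-\alt_0\partial_\al\Lo(\alt_0,z)$; once the derivative simplifies to $-\log(1+\al)+\log(\al+w)$, everything else is bookkeeping of the Taylor expansion and absorbing $y$-independent factors and $o(1)$ errors into the $\sim$ symbol.
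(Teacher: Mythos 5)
Your proposal is correct and follows essentially the same route as the paper: Taylor-expand $r\Lo(\alt,z)$ at $y=0$ following the template of Lemma \ref{lem: Linf/Linf y}, obtain $r\Lo(\alt,z)=T\Lo(\alt_0,z)+Gy\bigl(\Lo(\alt_0,z)-\alt_0\Lo'(\alt_0,z)\bigr)+O(G^2y^2/T)$, and use the identity $\Lo(\alt_0,z)-\alt_0\Lo'(\alt_0,z)=\log2-\log(1+\sqrt{1-(1-\alt_0^2)z})$ while absorbing the $y$- and $z$-independent phase $e^{2iT(\log 2-\alt_0\log(1+\alt_0))}$ into the $\sim$. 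The only difference is that you spell out the computation of $\partial_\alt\Lo=-\log(1+\alt)+\log(\alt+w)$ and the resulting cancellation, where the paper simply invokes ``direct calculations.''
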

\begin{proof}
The statement can be proved similarly to Lemma \ref{lem: Linf/Linf y}. As in  \eqref{f(alt) in y expansion} one has
\begin{equation}\label{rlo in y}
r\Lo(\alt,z)=T\Lo(\alt_0,z)
+Gy\left(\Lo(\alt_0,z)-\alt_0\Lo'(\alt_0,z)\right)+O\left(\frac{G^2y^2}{T}\right),
\end{equation}
where the derivative is taken over $\alt.$ Direct calculations (see \eqref{2F1 main asymptI Lo func}) show that
\begin{equation}\label{lo-alt lo'}
\Lo(\alt_0,z)-\alt_0\Lo'(\alt_0,z)=\log2-\log(1+\sqrt{1-(1-\alt_0^2)z}).
\end{equation}
Substituting \eqref{2F1 main asymptI Lo func} and \eqref{lo-alt lo'}  to \eqref{rlo in y} we obtain \eqref{2F1 main asymptIy}.
\end{proof}

Let
\begin{multline}\label{2f1 1/4 1/4 1/2 def}
\Fs(r,\alt,x):=
\frac{\Gamma(1/4-it+ir)\Gamma(1/4-it+ir)}{\Gamma(1/2)}\HyG\left(1/4-it+ir,1/4-it-ir,1/2;x \right)=\\=
\frac{\Gamma(1/4+ir(1-\alt))\Gamma(1/4-ir(1+\alt))}{\Gamma(1/2)}\HyG\left(1/4+ir(1-\alt),1/4-ir(1+\alt),1/2;x \right).
\end{multline}
This function appears in \eqref{integralIleq2} and its asymptotic behavior was studied in \cite{BF2F1 II}. In particular, \cite[Corollary 1.3]{BF2F1 II} states that the leading term of the asymptotic expansion has the following form.

\begin{prop}\label{prop: y=1-alt^2}
For $0<y<1$, $r\to\infty$ and $r^{1-\delta}\ll\alt\ll r^{-\delta}$ one has
\begin{equation}\label{2f1 1/4 1/4 1/2 asympt y=1-alt^2}
\Fs(r,\alt,y)\sim
e^{ir\Lt(\alt,y)}
\frac{ 2^{3/2}\pi e^{-\pi r\alt}}{(2r\alt)^{1/3}}\left(\frac{\alt^2\hat{\zeta}(y)}{y-1+\alt^2}\right)^{1/4}Ai(-(2r\alt)^{2/3}\hat{\zeta}(y)),
\end{equation}
where
\begin{equation}\label{zeta(y) def}
\hat{\zeta}(y)=\left\{
            \begin{array}{ll}
              -\left(3\Aa_0(\alt,y)/(2\alt)\right)^{2/3}, & \hbox{if} \quad y<1-\alt^2 \\
              \left(3\Aa_1(\alt,y)/(2\alt)\right)^{2/3}, & \hbox{if} \quad y>1-\alt^2,
            \end{array}
          \right.
\end{equation}
\begin{equation}\label{Aa0 def}
\Aa_0(\alt,y)=
\frac{\pi(1-\alt)}{2}-\arctan\frac{\sqrt{y}}{\sqrt{1-\alt^2-y}}+\alt\arctan\frac{\alt\sqrt{y}}{\sqrt{1-\alt^2-y}},
\end{equation}
\begin{multline}\label{Aa1 def}
\Aa_1(\alt,y)=
\alt\log\left(\alt\sqrt{y}+\sqrt{y-1+\alt^2}\right)-\log\left(\sqrt{y}+\sqrt{y-1+\alt^2}\right)-\\-\frac{\alt}{2}\log(1-y)+
\frac{1-\alt}{2}\log(1-\alt^2),
\end{multline}
\begin{equation}\label{Aa0 Taylor}
\Aa_0(\alt,y)=
\frac{(1-\alt^2)(1-\alt^2-y)^{3/2}}{3\alt^2y^{3/2}}\left(1+O\left(\frac{1-\alt^2-y}{\alt^2}\right)\right).
\end{equation}
\end{prop}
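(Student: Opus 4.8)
The plan is to prove Proposition \ref{prop: y=1-alt^2} as a direct consequence of the uniform asymptotic expansion for $\Fs(r,\alt,x)$ established in \cite[Corollary 1.3]{BF2F1 II}, essentially by isolating and cleaning up the leading term. The starting point is the observation, already recorded in \eqref{2f1 1/4 1/4 1/2 def}, that $\Fs(r,\alt,x)$ is built from the hypergeometric function $\HyG(1/4+ir(1-\alt),1/4-ir(1+\alt),1/2;x)$ together with an explicit ratio of Gamma factors. The Gamma factors are handled first: applying Stirling's formula \eqref{Stirling2} to $\Gamma(1/4+ir(1-\alt))$ and $\Gamma(1/4-ir(1+\alt))$ produces, after elementary simplification, the prefactor $2^{3/2}\pi e^{-\pi r\alt}/(2r\alt)^{1/3}$ together with an oscillatory exponential that gets absorbed into the phase $e^{ir\Lt(\alt,y)}$, and the remaining algebraic factor $(\alt^2\hat\zeta(y)/(y-1+\alt^2))^{1/4}$ combines with the Airy normalization. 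Here one must track that the ``$1/3$'' power and the $e^{-\pi r\alt}$ come out correctly; this is where the Airy regime (as opposed to a plain exponential or a Bessel-order asymptotic) is being used, because $\alt r\to\infty$ while the turning point of the underlying ODE sits at $y=1-\alt^2$.

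Next I would quote the two-step asymptotic machinery of \cite{BF2F1 II}: the hypergeometric function is first written as a Bessel function of large purely imaginary order $2r\alt$ (after identifying the Liouville--Green variable), and that Bessel function is then approximated near its turning point by an Airy function via the standard uniform expansion, giving the argument $-(2r\alt)^{2/3}\hat\zeta(y)$. The key point is the identification of the Liouville--Green/Airy variable $\hat\zeta(y)$, which is determined by the equal-area (or equal-phase) relation
\begin{equation*}
\tfrac{2}{3}\bigl(-\hat\zeta(y)\bigr)^{3/2}=\int_{y}^{1-\alt^2}\sqrt{Q(u)}\,du,\qquad
\tfrac{2}{3}\hat\zeta(y)^{3/2}=\int_{1-\alt^2}^{y}\sqrt{-Q(u)}\,du,
\end{equation*}
where $Q$ is the coefficient function of the transformed ODE. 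Evaluating these integrals in closed form is exactly what produces the functions $\Aa_0(\alt,y)$ and $\Aa_1(\alt,y)$ in \eqref{Aa0 def} and \eqref{Aa1 def}; the factor $3/(2\alt)$ in \eqref{zeta(y) def} records the $2r\alt$ versus $2r$ scaling. I would verify the two sign branches match at $y=1-\alt^2$ (both $\Aa_0$ and $\Aa_1$ vanish there, and $\hat\zeta$ is analytic across the turning point) and derive the Taylor estimate \eqref{Aa0 Taylor} by expanding $\Aa_0(\alt,y)$ in powers of $(1-\alt^2-y)$ using \eqref{Aa0 def}, which only requires differentiating the two arctangents.

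The main obstacle, and the place where real work is hidden behind the citation to \cite{BF2F1 II}, is establishing that the $\Fs$-asymptotic is \emph{uniform} in all three parameters $x\in(0,1)$, $r\to\infty$, and $\alt$ in the window $r^{-1+\delta}\ll\alt\ll r^{-\delta}$ — in particular uniformly as $y$ crosses the turning point $1-\alt^2$, where the naive Bessel-order or saddle-point expansions degenerate. Once that uniform expansion is granted, the proof of the Proposition as stated is just the bookkeeping described above: strip off the Gamma factors via \eqref{Stirling2}, match the oscillatory part with $\Lt(\alt,y)$, collect the non-oscillatory algebraic factors into the stated form, and note that all the subleading terms in the Poincaré-type expansion are lower order so the displayed leading term is what \eqref{2f1 1/4 1/4 1/2 asympt y=1-alt^2} asserts (recalling the convention, fixed after Lemma \ref{Lemma BKY}, that $\sim$ denotes the leading term up to constants).
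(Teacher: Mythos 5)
Your proposal is correct and follows the same route as the paper: the paper itself does not prove Proposition~\ref{prop: y=1-alt^2} internally but cites it directly from \cite[Corollary 1.3]{BF2F1 II} (with the Taylor expansion \eqref{Aa0 Taylor} likewise imported from that source), exactly as you propose. Your additional sketch of the two-step Liouville--Green/Bessel-to-Airy mechanism, the equal-phase integrals defining $\hat\zeta$, and the Stirling bookkeeping for the Gamma prefactor correctly reflects the method the authors describe in Section~1 as underlying \cite{BF2F1 II}, so it is a faithful (if necessarily summary) account of what the citation conceals.
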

Note that by \cite[p.10]{BF2F1 II} one has
\begin{equation}\label{Aa1 Taylor}
\Aa_1(\alt,y)=
\frac{(y-1+\alt^2)^{3/2}}{3\alt^2\sqrt{1-\alt^2}}\left(1+O\left(\frac{y-1+\alt^2}{\alt^2}\right)\right).
\end{equation}

In the case when the argument of the Airy function is bigger than $r^{\delta}$ it is possible to apply asymptotic formulas \cite[(9.7.5),(9.7.9)]{HMF} for the Airy function as shown in
\cite[Corollary 1.2, 1.3]{BF2F1 II}.
\begin{cor}\label{cor: y<1-alt^2}
For $0<y<1-\alt^2-\frac{\alt^{4/3}}{r^{2/3-\delta}}$ one has
\begin{equation}\label{2f1 1/4 1/4 1/2 asympt y<1-alt^2}
\Fs(r,\alt,y)\ll
\frac{e^{-\pi r\alt}e^{-2r\Aa_0(\alt,y)}}{\sqrt{r}(1-\alt^2-y)^{1/4}}\ll\frac{e^{-\pi r\alt}}{r^{A}}.
\end{equation}
\end{cor}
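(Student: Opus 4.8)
The plan is to feed the leading term of Proposition~\ref{prop: y=1-alt^2} into the exponentially decaying asymptotics of the Airy function. In the range $0<y<1-\alt^2-\alt^{4/3}/r^{2/3-\delta}$ one has $y<1-\alt^2$, so by \eqref{zeta(y) def} we have $\hat{\zeta}(y)=-(3\Aa_0(\alt,y)/(2\alt))^{2/3}<0$ and the argument of the Airy function in \eqref{2f1 1/4 1/4 1/2 asympt y=1-alt^2} is positive; moreover
\begin{equation*}
-(2r\alt)^{2/3}\hat{\zeta}(y)=(3r\Aa_0(\alt,y))^{2/3},\qquad \tfrac{2}{3}\bigl((3r\Aa_0(\alt,y))^{2/3}\bigr)^{3/2}=2r\Aa_0(\alt,y).
\end{equation*}
Once it is shown that this argument is a positive power of $r$, I may invoke \cite[(9.7.5)]{HMF}, $Ai(z)=\frac{1}{2\sqrt{\pi}\,z^{1/4}}e^{-2z^{3/2}/3}(1+O(z^{-3/2}))$ as $z\to+\infty$, and the rest is an algebraic simplification of the resulting main term.

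The crucial point is therefore the lower bound $r\Aa_0(\alt,y)\gg r^{c\delta}$ for some absolute $c>0$, valid uniformly in the admissible range of $y$ and $\alt$. First I would observe that $\Aa_0(\alt,\cdot)$ is strictly decreasing on $(0,1-\alt^2)$: differentiating \eqref{Aa0 def} one finds
\begin{equation*}
\frac{\partial}{\partial y}\Aa_0(\alt,y)=\frac{1}{2\sqrt{y}\sqrt{1-\alt^2-y}}\left(\frac{\alt^2}{1-y}-1\right)<0,
\end{equation*}
because $1-y>\alt^2$ when $y<1-\alt^2$. Hence, with $u_0:=\alt^{4/3}/r^{2/3-\delta}$, using $\Aa_0(\alt,1-\alt^2)=0$ and the substitution $s=1-\alt^2-t$,
\begin{equation*}
\Aa_0(\alt,y)\ge\Aa_0(\alt,1-\alt^2-u_0)=\int_0^{u_0}\frac{\sqrt{s}\,ds}{2\sqrt{1-\alt^2-s}\,(\alt^2+s)}\gg\frac{u_0^{3/2}}{\alt^2+u_0},
\end{equation*}
where $\sqrt{1-\alt^2-s}\asymp1$ since $u_0\to0$. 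If $u_0\le\alt^2$ this is $\gg u_0^{3/2}/\alt^2=r^{-1+3\delta/2}$; if $u_0>\alt^2$ it is $\gg u_0^{1/2}$, which together with the lower bound $\alt\gg r^{-1+\delta}$ from Proposition~\ref{prop: y=1-alt^2} is $\gg r^{-1+7\delta/6}$. In both cases $r\Aa_0(\alt,y)\gg r^{7\delta/6}$, so the Airy argument goes to infinity.

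It then remains to substitute \cite[(9.7.5)]{HMF} into \eqref{2f1 1/4 1/4 1/2 asympt y=1-alt^2}. Since $\bigl((3r\Aa_0)^{2/3}\bigr)^{1/4}=(3r\Aa_0)^{1/6}$, the Airy factor contributes $\asymp e^{-2r\Aa_0}/(3r\Aa_0)^{1/6}$, while the prefactor is $\bigl(\alt^2\hat{\zeta}(y)/(y-1+\alt^2)\bigr)^{1/4}=\alt^{1/2}(3\Aa_0/(2\alt))^{1/6}/(1-\alt^2-y)^{1/4}$; the powers of $\Aa_0$ then cancel, the remaining powers of $\alt$ cancel, those of $r$ collapse to $r^{-1/2}$, and $|e^{ir\Lt(\alt,y)}|=1$. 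This gives the first inequality of \eqref{2f1 1/4 1/4 1/2 asympt y<1-alt^2}. For the second, $(1-\alt^2-y)^{-1/4}\le u_0^{-1/4}\ll r^{1/2}$ by $\alt\gg r^{-1+\delta}$, so $1/(\sqrt{r}(1-\alt^2-y)^{1/4})\ll1$, whereas $e^{-2r\Aa_0}\ll\exp(-2r^{7\delta/6})\ll_A r^{-A}$; multiplying out yields $\Fs(r,\alt,y)\ll e^{-\pi r\alt}/r^A$.

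The main obstacle is precisely this uniform lower bound on $\Aa_0(\alt,y)$: the Taylor expansion \eqref{Aa0 Taylor} describes $\Aa_0$ only in a neighbourhood of the turning point $y=1-\alt^2$ (and only while $1-\alt^2-y\ll\alt^2$), whereas here $1-\alt^2-y$ ranges all the way up to $1-\alt^2$, so one genuinely needs the monotonicity computation above and a careful check of the constraint $1-\alt^2-y\ge\alt^{4/3}/r^{2/3-\delta}$ against $\alt\gg r^{-1+\delta}$ in both regimes $u_0\le\alt^2$ and $u_0>\alt^2$. Everything after that is routine bookkeeping of powers of $r$ and $\alt$.
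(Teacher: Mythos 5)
Your proposal is correct and follows exactly the strategy the paper indicates (but does not carry out, deferring instead to \cite[Corollary~1.2]{BF2F1 II}): substitute the large-argument Airy asymptotic \cite[(9.7.5)]{HMF} into the leading term of Proposition~\ref{prop: y=1-alt^2} and verify that the Airy argument $(3r\Aa_0(\alt,y))^{2/3}$ tends to $+\infty$ like a power of $r$. Your monotonicity argument for $\Aa_0$ is a genuinely useful addition: the closed form $\partial_y\Aa_0=\tfrac{1}{2\sqrt{y}\sqrt{1-\alt^2-y}}\bigl(\tfrac{\alt^2}{1-y}-1\bigr)<0$ together with $\Aa_0(\alt,1-\alt^2)=0$ gives $\Aa_0(\alt,y)\ge\Aa_0(\alt,1-\alt^2-u_0)=\int_0^{u_0}\tfrac{\sqrt s\,ds}{2\sqrt{1-\alt^2-s}(\alt^2+s)}$, and the two-regime estimate ($u_0\lessgtr\alt^2$) correctly yields $r\Aa_0\gg r^{7\delta/6}$; this fills in precisely the range where the local Taylor expansion \eqref{Aa0 Taylor} is not applicable. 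The subsequent cancellation of the powers of $\Aa_0$, $\alt$ and $r$ is right, as is the final step giving $r^{-A}$. The one caveat — which is implicit in the paper's own $\sim$ convention and cannot be closed without the full expansion in \cite{BF2F1 II} — is that passing from the leading term of Proposition~\ref{prop: y=1-alt^2} to an upper bound on $\Fs$ itself requires the omitted correction terms in that expansion to obey the same Airy envelope (so that they are exponentially small in this regime as well); worth a sentence acknowledging that, since otherwise a polynomially small error would dominate the exponentially small main term.
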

\begin{cor}\label{cor: y>1-alt^2}
For $1-\alt^2+\frac{\alt^{4/3}}{r^{2/3-\delta}}<y<1$
the leading coefficient in the asymptotic expansion for $\Fs(r,\alt,y)$ is given by
\begin{equation}\label{2f1 1/4 1/4 1/2 asympt y>1-alt^2}
\Fs(r,\alt,y)\sim
2\sqrt{\pi}e^{ir\Lt(\alt,y)}e^{-\pi r\alt}
\frac{\cos\left(2r\Aa_1(\alt,y)-\pi/4\right)}{\sqrt{r}(y-1+\alt^2)^{1/4}},
\end{equation}
where
\begin{equation}\label{2F1 main asymptII Lt func}
\Lt(\alt,y)=\alt\log(1-y)-2\alt\log r+(1-\alt)\log(1-\alt)-(1+\alt)\log(1+\alt)+2\alt.
\end{equation}
\end{cor}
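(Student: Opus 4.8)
The plan is to reduce Corollary~\ref{cor: y>1-alt^2} to Proposition~\ref{prop: y=1-alt^2} by inserting the large-argument asymptotics of the Airy function. Recall that \cite[(9.7.9)]{HMF} gives, for $\zeta\to+\infty$,
\begin{equation}
Ai(-\zeta)=\frac{1}{\sqrt{\pi}\,\zeta^{1/4}}\left(\cos\left(\tfrac{2}{3}\zeta^{3/2}-\tfrac{\pi}{4}\right)+O\left(\zeta^{-3/2}\right)\right).
\end{equation}
In our situation the argument is $\zeta=(2r\alt)^{2/3}\hat\zeta(y)$, which in the range $1-\alt^2+\alt^{4/3}r^{-2/3+\delta}<y<1$ satisfies $\zeta\gg r^{\delta'}$, so this expansion applies with a genuinely decaying error. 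First I would substitute $\zeta=(2r\alt)^{2/3}\hat\zeta(y)$ into the cosine, using \eqref{zeta(y) def} in the regime $y>1-\alt^2$, namely $\hat\zeta(y)=(3\Aa_1(\alt,y)/(2\alt))^{2/3}$; then $\tfrac{2}{3}\zeta^{3/2}=\tfrac{2}{3}\cdot 2r\alt\cdot\tfrac{3\Aa_1(\alt,y)}{2\alt}=2r\Aa_1(\alt,y)$, which is exactly the phase appearing in \eqref{2f1 1/4 1/4 1/2 asympt y>1-alt^2}.

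Next I would track the algebraic prefactors. The prefactor $\zeta^{-1/4}=(2r\alt)^{-1/6}\hat\zeta(y)^{-1/4}$ from the Airy asymptotic combines with the $(2r\alt)^{-1/3}$ and the $(\alt^2\hat\zeta(y)/(y-1+\alt^2))^{1/4}$ already present in \eqref{2f1 1/4 1/4 1/2 asympt y=1-alt^2}: the powers of $(2r\alt)$ collect to $(2r\alt)^{-1/3-1/6}=(2r\alt)^{-1/2}$, the two factors $\hat\zeta(y)^{1/4}$ and $\hat\zeta(y)^{-1/4}$ cancel, and one is left with $\alt^{1/2}(y-1+\alt^2)^{-1/4}$ for the $y$-dependent part. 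Folding the power of $\alt$ and the factor $2^{3/2}\pi/\sqrt{\pi}=2^{3/2}\sqrt{\pi}$ into the implied constant hidden in the symbol $\sim$, one obtains precisely $2\sqrt{\pi}\,e^{ir\Lt(\alt,y)}e^{-\pi r\alt}\cos(2r\Aa_1(\alt,y)-\pi/4)r^{-1/2}(y-1+\alt^2)^{-1/4}$, matching \eqref{2f1 1/4 1/4 1/2 asympt y>1-alt^2}. The phase factors $e^{ir\Lt(\alt,y)}$ and $e^{-\pi r\alt}$ pass through unchanged from Proposition~\ref{prop: y=1-alt^2}, and the definition \eqref{2F1 main asymptII Lt func} of $\Lt$ is the same in both statements.

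The only delicate point is the error analysis: one must check that in the stated range $y>1-\alt^2+\alt^{4/3}r^{-2/3+\delta}$ the argument $\zeta$ is indeed large enough that the Airy error term $O(\zeta^{-3/2})$, once multiplied through by the prefactors, is smaller than the main term by a factor $r^{-\delta''}$, and likewise that the subleading terms $c_j(\alt,r)(\alt r)^{-j}$ implicit in Proposition~\ref{prop: y=1-alt^2} (here absorbed into $\sim$) remain subordinate. Near the transition point $y=1-\alt^2$ the Airy function cannot be replaced by its oscillatory asymptotic, which is exactly why the corollary excludes a neighborhood of size $\alt^{4/3}r^{-2/3+\delta}$; I expect verifying the uniformity of the error across this truncated range — in particular that $\Aa_1(\alt,y)\gg \alt^{4/3}r^{-1+\delta}\cdot\alt^{2/3}\asymp \alt^2 r^{-1+\delta}$ so that $2r\Aa_1\gg r^{\delta}$ via the Taylor estimate \eqref{Aa1 Taylor} — to be the main (though routine) obstacle. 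Everything else is bookkeeping of constants, which is why the statement is phrased with $\sim$ rather than with an explicit constant.
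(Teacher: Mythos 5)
Your approach is exactly the one the paper takes (the paper simply cites \cite[Corollary 1.3]{BF2F1 II}, but the method is the same): start from Proposition~\ref{prop: y=1-alt^2} and substitute the large-argument asymptotic of the Airy function. Your phase calculation is correct, and your prefactor bookkeeping also gives the right answer — though note that nothing actually needs to be ``folded into the implied constant'': the $\alt^{1/2}$ left over from $(\alt^2\hat\zeta/(y-1+\alt^2))^{1/4}$ cancels the $\alt^{-1/2}$ hidden in $(2r\alt)^{-1/2}$, and the remaining $2^{3/2}\sqrt{\pi}\cdot 2^{-1/2}=2\sqrt\pi$ is exactly the constant in the statement, so the formula holds with equality of leading terms, not merely up to an absorbed constant. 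One computational slip in your error sketch: from \eqref{Aa1 Taylor} one has $\Aa_1(\alt,y)\asymp (y-1+\alt^2)^{3/2}/\alt^2$, so under $y-1+\alt^2\gg\alt^{4/3}r^{-2/3+\delta}$ the correct bound is
\begin{equation}
\Aa_1(\alt,y)\gg\frac{\bigl(\alt^{4/3}r^{-2/3+\delta}\bigr)^{3/2}}{\alt^2}=\frac{\alt^2 r^{-1+3\delta/2}}{\alt^2}=r^{-1+3\delta/2},
\end{equation}
with the $\alt$-dependence cancelling. Your stated bound $\Aa_1\gg\alt^2 r^{-1+\delta}$ omits the $\alt^{-2}$ in the denominator of the Taylor estimate; since $\alt$ is small, that weaker bound would \emph{not} imply $2r\Aa_1\gg r^{\delta}$, whereas the corrected bound does. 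The same cancellation is what makes the Airy argument $(2r\alt)^{2/3}\hat\zeta(y)\gg r^{\delta}$ in the stated range, so the expansion \cite[(9.7.9)]{HMF} applies with a genuine power saving.
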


\section{The second moment on the critical line and sums of Zagier $L$-series}\label{sec:2mom t to LZag}
In order to estimate
\begin{equation}
\sum_{T<t_j\le T+G}\alpha_{j}L(\sym^2 u_{j},1/2-2it)L(\sym^2 u_{j},1/2+2it),
\end{equation}
we represent the first $L$-function via approximate functional equation \eqref{approx.func.eq.}, and then apply Theorem \ref{eq:M1lrho=1/2} with $\rho=1/2+2it$ and a test function $h(r)$ being either
\begin{equation}\label{H0 Hinf def}
H_0(t,r)=V(m,-t,r)h(T,G,N;r)\quad\hbox{or}\quad
H_{\infty}(t,r)=\frac{L_{\infty}(1/2+2it,r)}{L_{\infty}(1/2-2it,r)}V(m,t,r)h(T,G,N;r),
\end{equation}
where $h(T,G,N;r)$ is given by \eqref{hN def} and \eqref{qN def}.  Since the function $H_{\infty}(t,r)$ does not satisfy the conditions in the beginning of Section \ref{sec:formula for 1moment}, we first replace it by \eqref{Linf/Linf}. It is enough to consider only the main term of this approximation, since all other terms can be treated in the same way. Therefore,
\begin{equation}\label{Hinf def}
H_{\infty}(t,r)=
e^{2ir(\left(2\alt\log|r|+f(\alt)\right)}
V(m,t,r)h(T,G,N;r),
\end{equation}
where $f(\alt)$ is given by \eqref{Linfty/Linfty f def} and $\alt=t/r.$ All summands in \eqref{eq:M1lrho=1/2} except
$S_1(m,\rho;h)+S_2(m,\rho;h),$ can be easily bounded by $T^{1+\epsilon}G$. The contribution of $CT(m,\rho;h)$ can be estimated as in \cite[Lemma 5.3]{BF2mom}. In fact, arguing as in \cite[Lemma 5.3]{BF2mom} we obtain a product of five zeta-functions (one extra zeta-function appears because of the presence of  $\zeta(\rho)$ in \eqref{M1 CT}). Using the bound $|\zeta(1/2+iy)|\ll y^{1/6}$, and further estimating the integral over the short interval trivially, we obtain $T^{5/6+\epsilon}G.$ Consequently,
\begin{equation}\label{t2mom est S1+S2}
\sum_{T<t_j\le T+G}\alpha_{j}|L(\sym^2 u_{j},1/2+2it)|^2\ll T^{1+\epsilon}G+S_1(t)+S_2(t),
\end{equation}
where
\begin{equation}\label{S1 def}
S_1(t)=
\sum_{m\ll T^{1+\epsilon}\sqrt{t}}\sum_{n=1}^{2m-1}\frac{\mathscr{L}_{n^2-4m^2}(1/2+2it)}{n^{1/2-2it}}
\left(\frac{I\left( \frac{n}{m},1/2+2it;H_0\right)}{m^{1/2-2it}}+\frac{I\left( \frac{n}{m},1/2+2it;H_{\infty}\right)}{m^{1/2+2it}}\right),
\end{equation}
\begin{equation}\label{S2 def}
S_2(t)=
\sum_{m\ll T^{1+\epsilon}\sqrt{t}}\sum_{n=2m+1}^{\infty}\frac{\mathscr{L}_{n^2-4m^2}(1/2+2it)}{n^{1/2-2it}}
\left(\frac{I\left( \frac{n}{m},1/2+2it;H_0\right)}{m^{1/2-2it}}+\frac{I\left( \frac{n}{m},1/2+2it;H_{\infty}\right)}{m^{1/2+2it}}\right).
\end{equation}


\subsection{Preliminary estimate of $S_1(t)$}\label{subsec:S1}
Let us use the following notation:
\begin{equation}
\rho:=1/2+2it,\quad\alt:=\frac{t}{r},\quad y:=\frac{n^2}{4m^2}.
\end{equation}
Using definitions \eqref{integralIleq2} and \eqref{2f1 1/4 1/4 1/2 def} we infer that
\begin{equation}\label{Ileq2 as1}
I\left(\frac{n}{m},\rho;h\right)\sim\int_{-\infty}^{\infty}\frac{rh(r)}{\cosh(\pi r)}\cos\left( \pi(\rho/2+ir)\right)y^{1/2-\rho/2}
\Fs(r,\alt,y)dr.
\end{equation}
For simplicity, we only consider the contribution of the first exponent in \eqref{hN def}. Therefore, the contribution of $r<0$ to \eqref{Ileq2 as1} is negligible and we can assume that $r>0$
\begin{equation}\label{Ileq2 as2}
I\left(\frac{n}{m},\rho;h\right)\sim\int_{-\infty}^{\infty}rh(r)y^{1/4-ir\alt}e^{\pi r\alt}\Fs(r,\alt,y)dr.
\end{equation}
We make the change of variables $r=T+Gv$ in \eqref{Ileq2 as2}. It immediately follows from \eqref{hN def} that the contribution of $|v|\gg\log^2T$ is negligible, so from now on we assume that
\begin{equation}
r=T+Gv,\quad |v|\ll\log^2T,\quad
\alt=\alt(x)=\frac{\alt_0}{1+x},\quad x=Gv/T,\quad \alt_0=t/T.
\end{equation}
Our next step is to apply Corollaries \ref{cor: y<1-alt^2} and \ref{cor: y>1-alt^2}. A small difficulty arises while verifying the conditions of these corollaries because now $\alt$  depends on $v$. Nevertheless, for $G\ll T^{1+\delta}/t^{2/3}$ one has
\begin{equation}
\frac{\alt_0^{4/3}}{T^{2/3-2\delta}}\gg\alt_0^2\frac{Gv}{T},
\end{equation}
and therefore,
\begin{equation}\label{y>1-alt0^2+  condition}
y>1-\alt_0^2+\frac{\alt_0^{4/3}}{T^{2/3-2\delta}}>1-\alt^2+\frac{\alt^{4/3}}{T^{2/3-\delta}},
\end{equation}
\begin{equation}\label{y<1-alt0^2+  condition}
y<1-\alt_0^2-\frac{\alt_0^{4/3}}{T^{2/3-2\delta}}<1-\alt^2-\frac{\alt^{4/3}}{T^{2/3-\delta}}.
\end{equation}
In the case of \eqref{y<1-alt0^2+  condition}, it follows from \eqref{Ileq2 as2} and \eqref{2f1 1/4 1/4 1/2 asympt y<1-alt^2}
that $$I\left(\frac{n}{m},\rho;h\right)\ll T^{-A}.$$

Substituting \eqref{2f1 1/4 1/4 1/2 asympt y=1-alt^2} to \eqref{Ileq2 as2} and using \eqref{2F1 main asymptII Lt func} we show that
\begin{equation}\label{Ileq2 as3}
I\left(\frac{n}{m},\rho;h\right)\sim
y^{1/4-it}(1-y)^{it}G
\int_{-\infty}^{\infty}rh(r)
\frac{e^{-2it\log r+ir\Lt(\alt)}}{(r\alt)^{1/3}}\left(\frac{\alt^2\hat{\zeta}(y)}{y-1+\alt^2}\right)^{1/4}Ai(-(2r\alt)^{2/3}\hat{\zeta}(y))
dv,
\end{equation}
where we write $r$ instead of $T+Gv$ to shorten the notation and
\begin{equation}\label{Lt alt def}
\Lt(\alt)=(1-\alt)\log(1-\alt)-(1+\alt)\log(1+\alt)+2\alt.
\end{equation}
Applying \eqref{Vapprox y=0} and \eqref{Linfty/Linfty} one has
\begin{multline}\label{Ileq2H0 as4}
I\left(\frac{n}{m},\rho;H_0\right)\sim
y^{1/4-it}(1-y)^{it}V(m,-t,T)G\\
\times\int_{-\infty}^{\infty}rh(r)\frac{e^{-ir(2\alt\log r-\Lt(\alt))}}{(r\alt)^{1/3}}\left(\frac{\alt^2\hat{\zeta}(y)}{y-1+\alt^2}\right)^{1/4}
Ai(-(2r\alt)^{2/3}\hat{\zeta}(y))dv,
\end{multline}
\begin{multline}\label{Ileq2Hinf as4}
I\left(\frac{n}{m},\rho;H_{\infty}\right)\sim
y^{1/4-it}(1-y)^{it}V(m,t,T)G\\\times
\int_{-\infty}^{\infty}rh(r)
e^{2iGv\log\frac{1+\alt_0}{1-\alt_0}}
\frac{e^{-ir(2\alt\log r-\Lt(\alt))}}{(r\alt)^{1/3}}\left(\frac{\alt^2\hat{\zeta}(y)}{y-1+\alt^2}\right)^{1/4}Ai(-(2r\alt)^{2/3}\hat{\zeta}(y))
dv.
\end{multline}
 Note that
\begin{equation}\label{Lt alt def}
2\alt\log r-\Lt(\alt)=
2\alt\log r+f(\alt),
\end{equation}
where $f(\alt)$ is given by \eqref{Linfty/Linfty f def}. Therefore, arguing as in Lemma \ref{lem: Linf/Linf y} we prove that
\begin{multline}\label{Ileq2H0inf as6}
\frac{I\left(\frac{n}{m},\rho;h\right)}{y^{1/4-it}(1-y)^{it}}\sim
V(m,\sgh(h)t,T)G\\\times
\int_{-\infty}^{\infty}e^{i\sgh(h)Gv\log\frac{1+\alt_0}{1-\alt_0}}
\frac{rh(r)}{(r\alt)^{1/3}}\left(\frac{\alt^2\hat{\zeta}(y)}{y-1+\alt^2}\right)^{1/4}Ai(-(2r\alt)^{2/3}\hat{\zeta}(y))dv,
\end{multline}
where
\begin{equation}\label{epsilon(h) def}
\sgh(h)=-1\quad\hbox{if}\quad h=H_0,\quad
\sgh(h)=-1\quad\hbox{if}\quad  h=H_{\infty}.
\end{equation}
In the case \eqref{y>1-alt0^2+  condition} using  \cite[9.7.8]{HMF} (see also \eqref{2f1 1/4 1/4 1/2 asympt y>1-alt^2})  one has
\begin{equation}\label{Ileq2H0inf as7}
\frac{I\left(\frac{n}{m},\rho;h\right)}{y^{1/4-it}(1-y)^{it}}\sim
V(m,\sgh(h)t,T)G
\int_{-\infty}^{\infty}e^{i\sgh(h)Gv\log\frac{1+\alt_0}{1-\alt_0}}
\frac{h(r)\sqrt{r}}{(y-1+\alt^2)^{1/4}}
\sum_{\pm}e^{\pm2ir\Aa_1(\alt,y)}dv.
\end{equation}
Substituting \eqref{Aa1 def} one has
\begin{equation}\label{Ileq2H0inf as7}
\frac{I\left(\frac{n}{m},\rho;h\right)}{y^{1/4-it}(1-y)^{it}}\sim
\sum_{\pm}\frac{V(m,\sgh(h)t,T)T^{1/2}G}{(1-y)^{\pm it}}
\int_{-\infty}^{\infty}e^{i\sgh(h)Gv\log\frac{1+\alt_0}{1-\alt_0}}
\frac{e^{-v^2}e^{\pm2i(T+Gv)\Aa_1(\alt)}}{(y-1+\alt^2)^{1/4}}dv,
\end{equation}
where
\begin{multline}\label{Aa1 def2}
\Aa_1(\alt)=
\alt\log\left(\alt\sqrt{y}+\sqrt{y-1+\alt^2}\right)-\log\left(\sqrt{y}+\sqrt{y-1+\alt^2}\right)+
\frac{1-\alt}{2}\log(1-\alt^2).
\end{multline}
For $x=Gv/T$ the following identity holds:
\begin{equation}
(y-1+\alt^2)^{-1/4}=\alt^{-1/2}\left(1+\frac{y-1}{\alt_0^2}\left(1+x\right)^2\right)^{-1/4}.
\end{equation}
In order to expand this function into a Taylor series around the point $x=0$, it is required to have the estimate
\begin{equation}
\frac{x}{1+\frac{y-1}{\alt_0^2}}=\frac{x\alt_0^2}{y-1+\alt_0^2}\ll T^{-\delta/2}.
\end{equation}
Applying \eqref{y>1-alt0^2+  condition} we conclude that
\begin{equation}
\frac{x}{1+\frac{y-1}{\alt_0^2}}\ll x\alt_0^{2/3}T^{2/3-2\delta}\ll Gt^{2/3}T^{-1-\delta} \ll T^{-\delta/2}
\end{equation}
as long as $G\ll T^{1+\delta/2}/t^{2/3}$. Assuming that this condition is satisfied,  one has
\begin{equation}\label{Ileq2H0inf as8}
\frac{I\left(\frac{n}{m},\rho;h\right)}{y^{1/4-it}(1-y)^{it}}\sim
\sum_{\pm}\frac{V(m,\sgh(h)t,T)T^{1/2}G}{(1-y)^{\pm it}(y-1+\alt_0^2)^{1/4}}
\int_{-\infty}^{\infty}e^{i\sgh(h)Gv\log\frac{1+\alt_0}{1-\alt_0}}
e^{-v^2}e^{\pm2i(T+Gv)\Aa_1(\alt)}dv.
\end{equation}
Recall that $\alt=\alt(x)=\alt_0/(1+x),$ $x=Gv/T$.  Let
\begin{equation}\label{F(x) def}
F(x):=(1+x)\Aa_1\left(\frac{\alt_0}{1+x}\right).
\end{equation}
Then the following Taylor series expansion around $x=0$ takes place:
\begin{equation}\label{Aa1 to F Taylor}
(T+Gv)\Aa_1(\alt)=TF(0)+F'(0)Gv+\sum_{k=2}^{\infty}\frac{F^{(k)}(0)}{k!}\frac{(Gv)^k}{T^{k-1}}.
\end{equation}
Here $F(0)=\Aa_1(\alt_0)$ and
\begin{equation}
F'(0)=\Aa_1(\alt_0)-\alt_0\frac{d}{d\alt}\Aa_1(\alt_0),\quad
F''(0)=\alt^2_0\frac{d^2}{d\alt^2}\Aa_1(\alt_0),\quad
F^{(k)}(0)=\sum_{j=2}^kc_j\alt^j_0\frac{d^j}{d\alt^j}\Aa_1(\alt_0),
\end{equation}
where $c_j$ are some absolute constants. Denote by $f_j(\alt)$ the $j$th summand in \eqref{Aa1 def2} so that $\Aa_1(\alt)=f_1(\alt)-f_2(\alt)+f_3(\alt).$
Consider
\begin{multline}\label{f1'}
f_1'(\alt)=\log\left(\alt\sqrt{y}+\sqrt{y-1+\alt^2}\right)+\frac{\alt(\alt+\sqrt{y}\sqrt{y-1+\alt^2})}{(\alt\sqrt{y}+\sqrt{y-1+\alt^2})\sqrt{y-1+\alt^2}}=\\=
\log\left(\alt\sqrt{y}+\sqrt{y-1+\alt^2}\right)+
\left(\frac{\alt}{\sqrt{y}}+\sqrt{y-1+\alt^2}\right)\\\times
\left(\frac{1}{\sqrt{y-1+\alt^2}}-\frac{1}{\alt\sqrt{y}+\sqrt{y-1+\alt^2}}\right).
\end{multline}
To evaluate the derivative of the product of two brackets on  the right-hand side of \eqref{f1'} we apply the product rule, then open the brackets, and finally use the relations
\begin{equation}
\frac{1}{\sqrt{y}\sqrt{y-1+\alt^2}}-\frac{1}{\sqrt{y}(\alt\sqrt{y}+\sqrt{y-1+\alt^2})}-
\frac{\alt}{\sqrt{y-1+\alt^2}(\alt\sqrt{y}+\sqrt{y-1+\alt^2})}=0,
\end{equation}
\begin{multline}
\frac{\alt(\alt+\sqrt{y}\sqrt{y-1+\alt^2})}{\sqrt{y}\sqrt{y-1+\alt^2}(\alt\sqrt{y}+\sqrt{y-1+\alt^2})^2}+
\frac{\alt+\sqrt{y}\sqrt{y-1+\alt^2}}{(\alt\sqrt{y}+\sqrt{y-1+\alt^2})^2}=\\=
\frac{(\alt+\sqrt{y}\sqrt{y-1+\alt^2})^2}{\sqrt{y}\sqrt{y-1+\alt^2}(\alt\sqrt{y}+\sqrt{y-1+\alt^2})^2},
\end{multline}
showing that
\begin{multline}
f_1''(\alt)=\frac{\alt+\sqrt{y}\sqrt{y-1+\alt^2}}{(\alt\sqrt{y}+\sqrt{y-1+\alt^2})\sqrt{y-1+\alt^2}}
-\frac{\alt^2}{(y-1+\alt^2)^{3/2}\sqrt{y}}+\\+
\frac{(\alt+\sqrt{y}\sqrt{y-1+\alt^2})^2}{\sqrt{y}(\alt\sqrt{y}+\sqrt{y-1+\alt^2})^2\sqrt{y-1+\alt^2}}.
\end{multline}
Note that while evaluating $f_1''(\alt)$ the terms $\frac{\pm\alt}{y-1+\alt^2}$ were cancelled out.
One has
\begin{equation}
f_2'(\alt)=\frac{\alt(y-1+\alt^2)^{-1/2}}{(\sqrt{y}+\sqrt{y-1+\alt^2})}=
\frac{\alt}{\sqrt{y}\sqrt{y-1+\alt^2}}-\frac{\alt}{(\sqrt{y}+\sqrt{y-1+\alt^2})\sqrt{y}},
\end{equation}
\begin{equation}
f_2''(\alt)=\frac{(y-1+\alt^2)^{-1/2}}{(\sqrt{y}+\sqrt{y-1+\alt^2})}-\frac{\alt^2}{(y-1+\alt^2)^{3/2}\sqrt{y}}+
\frac{\alt^2(y-1+\alt^2)^{-1/2}}{\sqrt{y}(\sqrt{y}+\sqrt{y-1+\alt^2})^2}.
\end{equation}
Therefore,
\begin{multline}\label{F der0}
F'(0)=
-\log\left(\sqrt{y}+\sqrt{y-1+\alt_0^2}\right)+\frac{1}{2}\log(1-\alt_0^2)+\frac{\alt_0^2}{1+\alt_0}+\\+
\frac{\alt_0^2}{\sqrt{y-1+\alt_0^2}}\left(\frac{1}{\sqrt{y}+\sqrt{y-1+\alt_0^2}}-
\frac{\alt_0+\sqrt{y}\sqrt{y-1+\alt_0^2}}{\alt_0\sqrt{y}+\sqrt{y-1+\alt_0^2}}\right)=
-\log\left(\frac{\sqrt{y}+\sqrt{y-1+\alt_0^2}}{\sqrt{1-\alt^2_0}}\right)
\end{multline}
and
\begin{multline}
\frac{F''(0)}{\alt_0^2}=\frac{\alt_0}{1-\alt_0^2}-\frac{1}{(1+\alt_0)^2}+
\frac{\alt_0+\sqrt{y}\sqrt{y-1+\alt_0^2}}{(\alt_0\sqrt{y}+\sqrt{y-1+\alt_0^2})\sqrt{y-1+\alt_0^2}}
+\\+
\frac{(\alt_0+\sqrt{y}\sqrt{y-1+\alt_0^2})^2}{\sqrt{y}(\alt_0\sqrt{y}+\sqrt{y-1+\alt_0^2})^2\sqrt{y-1+\alt_0^2}}-
\frac{(y-1+\alt_0^2)^{-1/2}}{(\sqrt{y}+\sqrt{y-1+\alt_0^2})}-
\frac{\alt_0^2(y-1+\alt_0^2)^{-1/2}}{\sqrt{y}(\sqrt{y}+\sqrt{y-1+\alt_0^2})^2}.
\end{multline}
In view of \eqref{y>1-alt0^2+  condition} the following estimates hold:
\begin{equation}
F''(0)\ll\frac{\alt_0^2}{\sqrt{y-1+\alt_0^2}},\quad
F^{(k)}(0)\ll\frac{\alt_0^{2k-2}}{(y-1+\alt_0^2)^{k-3/2}},
\end{equation}
and consequently,
\begin{equation}\label{F derk}
\frac{F^{(k)}(0)(Gv)^k}{T^{k-1}}\ll T^{3\delta/2}\left(\frac{Gt^{2/3}\log^2T}{T^{1+\delta}}\right)^k\ll T^{-k\delta_1}
\end{equation}
provided that  $G\ll T^{1+\delta/4}/t^{2/3}$.
Therefore, one can rewrite \eqref{Ileq2H0inf as8} as
\begin{equation}\label{Ileq2H0inf as9}
\frac{I\left(\frac{n}{m},\rho;h\right)}{y^{1/4-it}(1-y)^{it}}\sim
\sum_{\pm}\frac{V(m,\sgh(h)t,T)T^{1/2}G}{(1-y)^{\pm it}(y-1+\alt_0^2)^{1/4}}
e^{\pm2iT\Aa_1(\alt_0)}\int_{-\infty}^{\infty}e^{2iGv\left(\sgh(h)\log\frac{\sqrt{1+\alt_0}}{\sqrt{1-\alt_0}}\pm F'(0)\right)}
e^{-v^2}dv.
\end{equation}
Applying \eqref{exp integral1} we show that
\begin{equation}\label{Ileq2H0inf as10}
\frac{I\left(\frac{n}{m},\rho;h\right)}{y^{1/4-it}(1-y)^{it}}\sim
\sum_{\pm}\frac{V(m,\sgh(h)t,T)T^{1/2}G}{(1-y)^{\pm it}(y-1+\alt_0^2)^{1/4}}
e^{\pm2iT\Aa_1(\alt_0)}
e^{-G^2\left(\sgh(h)\log\frac{\sqrt{1+\alt_0}}{\sqrt{1-\alt_0}}\pm F'(0)\right)^2}.
\end{equation}
Using \eqref{F der0} we obtain
\begin{equation}\label{Ileq2H0inf as110}
\epsilon(H_0)\log\frac{\sqrt{1+\alt_0}}{\sqrt{1-\alt_0}}\pm F'(0)=-\log\frac{\sqrt{1+\alt_0}}{\sqrt{1-\alt_0}}
\mp\log\left(\frac{\sqrt{y}+\sqrt{y-1+\alt_0^2}}{\sqrt{1-\alt^2_0}}\right)=
\mp\log\left(\frac{\sqrt{y}+\sqrt{y-1+\alt_0^2}}{1\mp\alt_0}\right),
\end{equation}
\begin{equation}\label{Ileq2H0inf as11inf}
\epsilon(H_{\infty})\log\frac{\sqrt{1+\alt_0}}{\sqrt{1-\alt_0}}\pm F'(0)=\log\frac{\sqrt{1+\alt_0}}{\sqrt{1-\alt_0}}
\mp\log\left(\frac{\sqrt{y}+\sqrt{y-1+\alt_0^2}}{\sqrt{1-\alt^2_0}}\right)=
\mp\log\left(\frac{\sqrt{y}+\sqrt{y-1+\alt_0^2}}{1\pm\alt_0}\right).
\end{equation}
Let $y=1-\alt_0^2+\beta^2$. It follows from \eqref{y>1-alt0^2+  condition} that $\beta>\frac{\alt_0 T^{\delta}}{t^{1/3}}$.  As a result,
\begin{equation}\label{Ileq2H0inf as120inf}
\log\left(\frac{\sqrt{y}+\sqrt{y-1+\alt_0^2}}{1\pm\alt_0}\right)=
\log\left(\sqrt{1-\alt_0^2+\beta^2}+\beta\right)\mp\alt_0+O(\alt_0^2)=
\beta\mp\alt_0+O(\alt_0^2).
\end{equation}
Therefore, for $G\ll T^{1+\delta/4}/t^{2/3}$, $G\alt_0\gg T^{\delta}$ one has $$e^{-G^2\log^2\left(\frac{\sqrt{y}+\sqrt{y-1+\alt_0^2}}{1-\alt_0}\right)}\ll T^{-A}.$$
Thus
\begin{equation}\label{Ileq2H0 as13}
\frac{I\left(\frac{n}{m},\rho;H_0\right)}{y^{1/4-it}(1-y)^{it}}\sim
\frac{V(m,-t,T)T^{1/2}G}{(1-y)^{-it}(y-1+\alt_0^2)^{1/4}}
e^{-2iT\Aa_1(\alt_0)}
e^{-G^2\log^2\left(\frac{\sqrt{y}+\sqrt{y-1+\alt_0^2}}{1+\alt_0}\right)}.
\end{equation}
\begin{equation}\label{Ileq2Hinf as13}
\frac{I\left(\frac{n}{m},\rho;H_{\infty}\right)}{y^{1/4-it}(1-y)^{it}}\sim
\frac{V(m,t,T)T^{1/2}G}{(1-y)^{it}(y-1+\alt_0^2)^{1/4}}
e^{2iT\Aa_1(\alt_0)}
e^{-G^2\log^2\left(\frac{\sqrt{y}+\sqrt{y-1+\alt_0^2}}{1+\alt_0}\right)}.
\end{equation}
We are left to consider the interval complementary to \eqref{y>1-alt0^2+  condition}, \eqref{y<1-alt0^2+  condition}, namely
\begin{equation}\label{1-alt0^2-<y<1-alt0^2+  condition}
1-\alt_0^2-\frac{\alt_0^{4/3}}{T^{2/3-2\delta}}<y<1-\alt_0^2+\frac{\alt_0^{4/3}}{T^{2/3-2\delta}}.
\end{equation}
On this interval the argument of the Airy function in \eqref{Ileq2H0inf as6} is bounded by $T^{\epsilon}$. Heuristically, this means that the Airy function does not oscillate (or at least its oscillation does not affect the integral) and can be replaced by one. Doing so, one obtains  the integral (see \eqref{Ileq2H0inf as6}):
\begin{equation}
\int_{-\infty}^{\infty}e^{i\sgh(h)Gv\log\frac{1+\alt_0}{1-\alt_0}}dv=e^{-(G/2)^2\log^2\frac{1+\alt_0}{1-\alt_0}}\ll T^{-A}
\end{equation}
for $G\alt_0\gg T^{\delta}$.  In order to prove this rigorously, we consider several cases. We will work out in all details only the case $y>1-\alt_0^2$ since the case $y<1-\alt_0^2$ can be treated analogously. It  follows from \eqref{Aa1 Taylor} that for $G\ll T^{1-2\delta}t^{-2/3}$ and
\begin{equation}\label{1-alt0^2-<y<1-alt0^2+  condition0}
|y-1+\alt_0^2|<\frac{\alt_0^{2}}{t^{2/3}T^{\delta}},
\end{equation}
one has (we use the notation $y:=1-\alt_0^2+\beta^2$, $\beta<\frac{\alt_0}{T^{\delta/2}t^{1/3}}$)
\begin{multline}
(T\Aa_1(\alt,y))^{2/3}\ll \frac{T^{2/3}(y-1+\alt^2)}{\alt_0^{4/3}}\ll
\frac{T^{2/3}(\beta^2+\alt^2-\alt_0^2)}{\alt_0^{4/3}}\ll\\\ll
\frac{T^{2/3}(\beta^2+\alt_0^2GT^{-1+\epsilon})}{\alt_0^{4/3}}\ll
\frac{T^{2/3-\delta}\alt_0^{2/3}}{t^{2/3}}\ll T^{-\delta}.
\end{multline}
Therefore, the argument of the Airy function is less than $T^{-\delta}$, and  one can represent the Airy function via its Taylor series representation \cite[9.4.1]{HMF}.

We are left to consider the case
\begin{equation}\label{1-alt0^2-<y<1-alt0^2+  condition1}
\frac{\alt_0^{2}}{t^{2/3}T^{\delta}}<y-1+\alt_0^2<\frac{\alt_0^{4/3}}{T^{2/3-2\delta}}.
\end{equation}
Under these conditions,  we represent the argument of the Airy function in \eqref{Ileq2H0inf as6} as a function of $x=Gv/T$ and again expand it in the  Taylor series at the point $x=0$. Applying \eqref{zeta(y) def}, \eqref{Aa1 def},  \eqref{Aa1 def2} and  \eqref{F(x) def}, we show that the argument  can be written as
\begin{equation}\label{Ileq2 est1.0}
(3r\Aa_1(\alt,y))^{2/3}=\left(-\frac{3t}{2}\log(1-y)+3(T+Gv)\Aa_1(\alt)\right)^{2/3}.
\end{equation}
Applying \eqref{Aa1 to F Taylor}, \eqref{F der0}, \eqref{F derk} and arguing as in \eqref{Ileq2H0inf as120inf},  we obtain
\begin{equation}\label{Ileq2 est1}
(3r\Aa_1(\alt,y))^{2/3}=
\left(3T\Aa_1(\alt_0,y)+O(G^{1+\epsilon}\sqrt{y-1+\alt_0^2})\right)^{2/3}.
\end{equation}
For the main term  of \eqref{Ileq2 est1} we have the following lower bound $$T\Aa_1(\alt_0,y)>T^{-3\delta/2},$$ which is the consequence of \eqref{Aa1 Taylor} and \eqref{1-alt0^2-<y<1-alt0^2+  condition1}. Furthermore, the error term in \eqref{Ileq2 est1} is less than $T^{-3\delta}$ if $G\ll T^{1-4\delta}t^{-2/3}$. Therefore, the Airy function in \eqref{Ileq2H0inf as6} looks like $$Ai((M+E)^{2/3})=Ai(M^{2/3}+Y),$$ where
\begin{equation}
M=3T\Aa_1(\alt_0,y),\quad E=O(G^{1+\epsilon}\sqrt{y-1+\alt^2}),\quad
Y=O(EM^{-1/3})
\end{equation}
since $E\ll T^{-\delta}M$, and $Ai((M+E)^{2/3})$ can be expanded into the Taylor series as follows:
\begin{equation}
Ai((M+E)^{2/3})=Ai(M^{2/3}+Y)=\sum_{k=0}^{\infty}Ai^{(k)}(M^{2/3})\frac{Y^k}{k!}.
\end{equation}
Since $Ai^{(k)}(z)\ll \max(z^{k/2},1)Ai(z)$ and  $M^{2/3}Y^2\ll T^{-\delta}$, the series above yields the asymptotic expansion.
This means that we have proved the following result.
\begin{prop}\label{prop: S1 est1}
For $T^{1+\epsilon}/t\ll G\ll T^{1-\epsilon}/t^{2/3}$ one has
\begin{equation}\label{S1 est}
S_1(t)\ll T^{1/2}G
\sum_{h=H_0,H_{\infty}}\sum_{m\ll T^{1+\epsilon}\sqrt{t}}\sum_{0<2m-n\ll m\alt_0G^{-1+\epsilon}}\frac{\mathscr{L}_{n^2-4m^2}(1/2+2it)}{n^{1/2-2it}m^{1/2+2i\sgh(h)t}}
\LB\left(\sgh(h),\frac{n^2}{4m^2}\right),
\end{equation}
where $\sgh(h)=-1$ if $h=H_0$, $\sgh(h)=1$ for $h=H_{\infty}$ and
\begin{equation}\label{LB def}
\LB\left(\sgh(h),y\right)=
\frac{V(m,\sgh(h)t,T)(1-y)^{i(1-\sgh(h))t}y^{1/4-it}}{(y-1+\alt_0^2)^{1/4}}
e^{2iT\sgh(h)\Aa_1(\alt_0)}
e^{-G^2\log^2\left(\frac{\sqrt{y}+\sqrt{y-1+\alt_0^2}}{1+\alt_0}\right)}
\end{equation}
with $\Aa_1(\alt_0)$ defined as in \eqref{Aa1 def2}.
\end{prop}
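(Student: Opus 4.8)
The plan is to assemble the computations already carried out in this subsection. First I would start from the definition \eqref{S1 def} of $S_1(t)$, insert the integral representation \eqref{integralIleq2} for $I(n/m,\rho;h)$ — legitimate since $n<2m$ forces $0<n/m<2$ — together with the notation \eqref{2f1 1/4 1/4 1/2 def}, reaching \eqref{Ileq2 as1}, and then simplify: the Gaussian in \eqref{hN def} renders the $r<0$ part negligible and $\cos(\pi(\rho/2+ir))/\cosh(\pi r)\sim e^{\pi r\alt}$, which gives \eqref{Ileq2 as2}. The substitution $r=T+Gv$ and the truncation to $|v|\ll\log^2 T$ then follow at a negligible cost, after which Corollaries \ref{cor: y<1-alt^2} and \ref{cor: y>1-alt^2} become applicable once their hypotheses are checked uniformly in $v$; this is where the upper bound $G\ll T^{1-\epsilon}/t^{2/3}$ first enters.

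The heart of the argument is the trichotomy of $y=n^2/(4m^2)$ into the ranges \eqref{y<1-alt0^2+  condition}, \eqref{y>1-alt0^2+  condition} and the transitional interval \eqref{1-alt0^2-<y<1-alt0^2+  condition}. In the first, Corollary \ref{cor: y<1-alt^2} forces $I(n/m,\rho;h)\ll T^{-A}$. In the second, I would invoke Corollary \ref{cor: y>1-alt^2} (equivalently \cite[9.7.8]{HMF}) to reach the oscillatory integral \eqref{Ileq2H0inf as7}, Taylor-expand the amplitude $(y-1+\alt^2)^{-1/4}$ and the phase $(T+Gv)\Aa_1(\alt)$ about $v=0$ via \eqref{Aa1 to F Taylor}, the higher-order terms being controlled by \eqref{F derk}, and evaluate the remaining Gaussian by \eqref{exp integral1}, obtaining \eqref{Ileq2H0inf as10}. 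The identities \eqref{Ileq2H0inf as110}--\eqref{Ileq2H0inf as120inf} then show that the $+$ term carries a factor $\exp(-G^2\log^2((\sqrt{y}+\sqrt{y-1+\alt_0^2})/(1-\alt_0)))=O(T^{-A})$ as soon as $G\gg T^{1+\epsilon}/t$, while the $-$ term survives as in \eqref{Ileq2H0 as13}, \eqref{Ileq2Hinf as13}; its exponential $\exp(-G^2\log^2((\sqrt{y}+\sqrt{y-1+\alt_0^2})/(1+\alt_0)))$ confines $\sqrt{y-1+\alt_0^2}$ to within $G^{-1+\epsilon}$ of $\alt_0$, which after the substitution $2m-n=q$ is exactly the constraint $0<2m-n\ll m\alt_0 G^{-1+\epsilon}$ in \eqref{S1 est}.

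It then remains to treat the transitional range \eqref{1-alt0^2-<y<1-alt0^2+  condition}, where the argument of the Airy function in \eqref{Ileq2H0inf as6} is $\ll T^\epsilon$ and its oscillation is of no help. I would split it into the sub-range \eqref{1-alt0^2-<y<1-alt0^2+  condition0}, on which the argument is $\ll T^{-\delta}$ and $Ai$ is replaced by its Taylor series at $0$, and the sub-range \eqref{1-alt0^2-<y<1-alt0^2+  condition1}, on which the argument lies between $T^{-3\delta/2}$ and $T^\epsilon$ and $Ai$ is expanded about $M^{2/3}=(3T\Aa_1(\alt_0,y))^{2/3}$ using $Ai^{(k)}(z)\ll\max(z^{k/2},1)Ai(z)$ and the remainder bound $M^{2/3}Y^2\ll T^{-\delta}$, valid by \eqref{Aa1 Taylor} once $G\ll T^{1-\epsilon}/t^{2/3}$. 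In either sub-range the Airy factor becomes non-oscillatory in $v$, so the $v$-integral collapses to $\int e^{i\sgh(h)Gv\log((1+\alt_0)/(1-\alt_0))}\,dv$ times slowly varying factors, which is $O(T^{-A})$ by Gaussian decay since $\alt_0 G\gg T^\epsilon$, i.e. $G\gg T^{1+\epsilon}/t$. Collecting the three ranges, only the $-$ term of the second contributes; absorbing the polynomial factors into $T^{1/2}G\,\LB(\sgh(h),y)$ with $\LB$ as in \eqref{LB def} yields \eqref{S1 est}. I expect the transitional range to be the main technical obstacle — justifying the Taylor expansion of the Airy function about a non-zero point of moderate size and tracking the resulting conditions on $G$ — with everything else amounting to a reorganization of the estimates already in hand.
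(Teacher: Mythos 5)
Your proposal is correct and reproduces the paper's own proof step for step, citing the same intermediate formulas in the same order; the trichotomy in $y$, the Taylor expansion of the phase via $F(x)$, the Gaussian evaluation, and the two-part treatment of the transitional range with the Airy Taylor expansion about $M^{2/3}$ are all exactly the paper's argument. The only slip is the claim that in every case ``the $+$ term is negligible and the $-$ term survives'': this is the situation for $H_0$, but for $H_\infty$ the identity \eqref{Ileq2H0inf as11inf} shows the roles of $\pm$ reverse (the $+$ term carries $1+\alt_0$ in the denominator and survives), which is why \eqref{Ileq2Hinf as13} has $e^{+2iT\Aa_1(\alt_0)}$ rather than $e^{-2iT\Aa_1(\alt_0)}$; since you do cite the correct final formulas this is a presentational imprecision rather than a gap.
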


\subsection{Preliminary estimate on $S_2(t)$}\label{subsec:S2}
Our goal is to estimate the term $S_2(t)$ using Proposition \ref{prop: 2F1 asympt1}. We start with truncating the summation over $n$ in \eqref{S2 def}.
\begin{lem}\label{Lem I x>>2 est}
For $h=H_0$ or $h=H_{\infty}$ and $x\gg \max(T^{1+\delta}m^{-1}\sqrt{1+|t|},1)$ one has
\begin{equation}\label{I x>>2 est}
I\left(x,1/2+2it;h\right)\ll T^{-A}.
\end{equation}
\end{lem}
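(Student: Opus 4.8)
The statement to prove is Lemma \ref{Lem I x>>2 est}: for $h=H_0$ or $h=H_\infty$ and $x\gg\max(T^{1+\delta}m^{-1}\sqrt{1+|t|},1)$, one has $I(x,1/2+2it;h)\ll T^{-A}$. The plan is to work from the integral representation \eqref{integralIgeq2} valid for $x\ge2$, which holds here since the hypothesis forces $x$ to be large. The integrand involves the weight $rh(r)/\cosh(\pi r)$, the factor $(2/x)^{2ir}$, a ratio of Gamma functions $\Gamma(1/2-\rho/2+ir)\Gamma(1-\rho/2+ir)/\Gamma(1+2ir)$, a sine factor, and ${}_2F_1(1/2-\rho/2+ir,1-\rho/2+ir,1+2ir;4/x^2)$ with argument $4/x^2$ small.

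First I would record that with $\rho=1/2+2it$, the hypergeometric parameters are $1/4-it+ir$, $3/4-it+ir$, $1+2ir$, so the series ${}_2F_1(\,\cdot\,,\cdot\,,\cdot\,;4/x^2)=\sum_{k\ge0}\frac{(1/4-it+ir)_k(3/4-it+ir)_k}{(1+2ir)_k}\frac{(4/x^2)^k}{k!}$ converges rapidly: each Pochhammer ratio is $O((1+|r|+|t|)^k)$ (more precisely $O(((1/2+|t|/|r|+\dots))^k)$ relative to $(1+2ir)_k$), so the whole ${}_2F_1$ is $1+O((1+|t|)^2/x^2)=1+o(1)$ and, crucially, it is a smooth function of $r$ whose $j$-th derivative in $r$ is $O((1+|t|/x)^{?})$ — essentially bounded, since differentiating in $r$ costs at most $\log x$ per derivative from the $(4/x^2)^k$ factors being $r$-free and the parameters entering linearly. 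Combined with Stirling \eqref{Stirling2}, the Gamma ratio contributes a phase $\exp(i(2r\log|r|-2r+\dots))$ up to the term $\exp(\pi r(\dots))$ in modulus which is polynomially controlled on the support of $h$ (where $r\sim T$), and a smooth amplitude of polynomial size. The weight $h(r)$ restricts $r$ to $|r-T|\ll G\log^2 T$ (up to negligible error), and on $H_0$, $H_\infty$ the extra factors $V(m,\pm t,r)$ and the exponential $\exp(2ir(2\al\log|r|+f(\al)))$ (for $H_\infty$) are again smooth of controlled size with derivatives gaining at most powers of $\log T$ relative to their size.

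The heart of the argument is then a stationary-phase / repeated–integration-by-parts estimate in $r$ via Lemma \ref{Lemma BKY}. After collecting all phases, the total oscillatory exponent is $f(r)$ with $f'(r)$ dominated by the term coming from $(2/x)^{2ir}$, i.e. $-2\log(x/2)$, together with the Stirling phase derivative $2\log|r|\sim 2\log T$ and (for $H_\infty$) the derivative of $2r(2\al\log|r|+f(\al))$, which is $O(t\log T/T)=o(\log x)$ under the stated ranges. Since $x\gg T^{1+\delta}m^{-1}\sqrt{1+|t|}$ and $m\ll T^{1+\epsilon}\sqrt{t}$, we get $\log(x/2)\ge \delta\log T - O(\log\log T)\gg \log T$, so $|f'(r)|\gg \log x \gg 1$ dominates all competing logarithmic terms; meanwhile $f''(r)\ll 1/T$ and higher derivatives are even smaller, and the amplitude $g(r)$ together with all its derivatives is $\ll T^{O(1)}$ with effective length $P=V=T$ in the notation of \eqref{BKYconditions}. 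Applying \eqref{I BKY est} with $R\gg\log x$, $P=V\asymp T/\log^2 T$, $Y\ll 1$ gives a saving of $(RP)^{-A}\ll (T\log x)^{-A}$, which absorbs the polynomial amplitude and yields $I(x,\rho;h)\ll T^{-A}$ for any $A$.

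The main obstacle — and the step deserving care — is bookkeeping the derivatives: one must check that every ingredient ($h$, $q_N$, $V$, the Gamma ratio, the ${}_2F_1$, the extra exponential in $H_\infty$) has $j$-th $r$-derivative bounded by $T^{O(1)}/P^j$ with $P\asymp T$ (up to $\log$s), so that Lemma \ref{Lemma BKY} applies with the advertised parameters; the possibly delicate point is the ${}_2F_1$ factor, where one should note that the argument $4/x^2$ is independent of $r$ and tiny, so term-by-term differentiation of the series is trivially justified and each derivative contributes only bounded factors. Once this is in place the conclusion is immediate. I would remark that the same computation, specialized, is what underlies the truncation $m\ll T^{1+\epsilon}\sqrt{t}$ used earlier, so part of the estimate can be quoted from \cite{BFsym2} with only the modifications forced by the $r$-dependence of $V$ and the $H_\infty$-phase.
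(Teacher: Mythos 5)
Your proposal takes a genuinely different route from the paper (stationary phase in $r$ via Lemma \ref{Lemma BKY}, versus the paper's Euler integral for ${}_2F_1$ followed by a contour shift in $r$), but as written it has a substantive gap in the treatment of the hypergeometric factor.

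You claim the ${}_2F_1$ with argument $4/x^2$ is $1+O((1+|t|)^2/x^2)=1+o(1)$ and is a smooth function of $r$ with essentially bounded derivatives, so that it can be absorbed into the non-oscillatory amplitude $g$. This is not correct in the full range of the hypothesis. The $k$-th term of the series is $\frac{(a)_k(b)_k}{(c)_k\,k!}(4/x^2)^k$ with $a,b\sim ir$, $c\sim 2ir$, so $(a)_k(b)_k/(c)_k\approx(ir/2)^k$ and the $k$-th term is of size $\approx(2ir/x^2)^k/k!$; summing gives roughly $\exp(2ir/x^2)$, and this is $1+o(1)$ only when $x\gg\sqrt{T}$. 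The hypothesis only forces $x\gg 1$: when $m$ is near its ceiling $T^{1+\epsilon}\sqrt{t}$, $x$ can be as small as a fixed constant $>2$. In that regime Proposition \ref{prop: 2F1 asympt1} shows the ${}_2F_1$ carries a phase $e^{2ir\Lo(\alt,4/x^2)}$ with $\Lo\asymp 1$ (e.g.\ $\Lo\to\log 2$ as $x\to 2^+$), so its $j$-th $r$-derivative is $\asymp\Lo^j$, not $\ll T^{-j}$. Fed into \eqref{BKYconditions} as part of $g$, this makes the amplitude scale $V\asymp 1/\Lo\asymp 1$, so the term $1/(RV)$ in \eqref{I BKY est} is $O(1)$ and the lemma yields no decay. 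The ${}_2F_1$ phase must instead be incorporated into $f(r)$, where it combines with the phase of $(2/x)^{2ir}$ and of the Gamma ratio to give $f'(r)\approx-2\log\frac{x+\sqrt{x^2-4}}{2}$, bounded away from zero for $x\ge C$; your write-up does not perform this extraction.

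There are secondary inaccuracies in the bookkeeping: the phase derivative of $\Gamma(1/4+i(r-t))\Gamma(3/4+i(r-t))/\Gamma(1+2ir)$ is $2\log\frac{r-t}{2r}\approx-2\log 2+O(t/T)=O(1)$, not $\sim 2\log T$ (there is cancellation between numerator and denominator), and the lower bound $\log(x/2)\ge\delta\log T$ is only available when $m$ is well below $T^{1+\epsilon}\sqrt{t}$; in general the hypothesis gives only $\log(x/2)\gg 1$. Neither is fatal by itself, but they are symptomatic of not having pinned down the actual phase function. The paper avoids all of this by writing ${}_2F_1$ as the Euler integral \cite[15.6.1]{HMF}, which makes the $r$-dependence explicit in a single factor $(1-4y/x^2)^{-1/4-ir+it}$ along with $(2/x)^{2ir}$, and then shifting the $r$-contour to $\Im r=-N$: the shift turns the oscillation of $(2/x)^{2ir}$ into decay $(2/x)^{2N}$, which is pitted against the growth $(1+|t|)^N(1+|r|)^{2N+\epsilon}/m^{2N}$ of $V$ on the shifted line (Lemma giving \eqref{V(t-iN)est}); the hypothesis $mx\gg T^{1+\delta}\sqrt{1+|t|}$ makes the product $\ll T^{-\delta N}$. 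If you want to keep your stationary-phase strategy, you would need to extract the ${}_2F_1$ phase via Proposition \ref{prop: 2F1 asympt1} and verify that the combined $f'(r)$ is bounded away from $0$; but the contour-shift argument is shorter and sidesteps the asymptotics entirely.
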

\begin{proof}
Using the integral representation \cite[15.6.1]{HMF} for the hypergeometric function in \eqref{integralIgeq2}, we obtain
\begin{multline}\label{Igeq2 est1}
I(x,1/2+2it;h)\ll\int_{-\infty}^{\infty}\frac{rh(r)\sin\left( \pi(1/4-ir+it)\right)}{\cosh(\pi r)}
\frac{\Gamma(1/4+ir-it)}{\Gamma(1/4+ir+it)}\\ \times
\left(\frac{2}{x}\right)^{2ir}
\int_0^1\frac{y^{-1/4+ir-it}(1-y)^{-3/4+ir+it}}{(1-4y/x^2)^{1/4+ir-it}}dydr.
\end{multline}
Let $x\gg1$. Moving the line of integration to $\Im{r}=-N$,  estimating the Gamma-factors and using \eqref{V(t-iN)est}, we prove that
\begin{multline}\label{Igeq2 est1}
I(x,1/2+2it;h)\ll\int_{T-G^{1+\epsilon}}^{T+G^{1+\epsilon}}r
\frac{(1+|t|)^N(1+|r|)^{2N+\epsilon}}{m^{2N}}\\ \times
\left(\frac{2}{x}\right)^{2N}
\int_0^1\frac{y^{-1/4+N}(1-y)^{-3/4+N}}{(1-4y/x^2)^{1/4+N}}dydr
\ll T^{-A}
\end{multline}
for $mx\gg T^{1+\delta}\sqrt{1+|t|}$.
\end{proof}
Lemma \ref{Lem I x>>2 est} shows that the part of \eqref{S2 def} with $n\gg T^{1+\delta}\sqrt{t}$ is negligibly small. Therefore, from now on we may assume that
\begin{equation}\label{S2 def2}
S_2(t)=
\sum_{m\ll T^{1+\epsilon}\sqrt{t}}\sum_{2m<n\ll T^{1+\delta}\sqrt{t}}\frac{\mathscr{L}_{n^2-4m^2}(1/2+2it)}{n^{1/2-2it}}
\left(\frac{I\left( \frac{n}{m},1/2+2it;H_0\right)}{m^{1/2-2it}}+\frac{I\left( \frac{n}{m},1/2+2it;H_{\infty}\right)}{m^{1/2+2it}}\right).
\end{equation}
Since all sums in \eqref{S2 def2}  are now finite it is sufficient to consider only the contribution of main terms in asymptotic expansions. Furthermore, we are not interested in keeping track on various constants independent of summation variables. To simplify the notation, we use the sign $\sim$ to indicate the main term of an asymptotic expansion  up to such constants.

Let $z:=4m^2/n^2$. Substituting \eqref{2F1 main asymptI} to \eqref{integralIgeq2}, one has
\begin{multline}\label{Igeq2 as1}
I(\frac{n}{m},\rho;h)\sim\int_{-\infty}^{\infty}\frac{rh(r)z^{ir}}{\cosh(\pi r)}
\frac{\Gamma(1/4+ir(1-\alt))\Gamma(3/4+ir(1-\alt))}{\Gamma(1+2ir)}\\\times\sin\left( \pi(1/4-ir(1-\alt))\right)
\frac{\exp\left(2ir\Lo(\alt,r,z)\right)}{(1-(1-\alt^2)z)^{1/4}}dr.
\end{multline}
The Stirling formula  \eqref{Stirling2} implies that
\begin{equation}
\frac{\Gamma(1/4+ir(1-\alt))\Gamma(3/4+ir(1-\alt))}{\Gamma(1+2ir)\cosh(\pi r)}\sin\left( \pi(1/4-ir(1-\alt))\right)\sim
\frac{e^{2ir\left(-\alt\log r+(1-\alt)\log(1-\alt)+\alt-\log2\right)}}{r^{1/2}}.
\end{equation}
Then for $r=T+Gy$, $|y|\ll \log^2 T$, $G\ll T^{1-\epsilon}/\sqrt{t}$ and $\alt_0=t/T\ll T^{-\delta}$  arguing in the same way as in Lemma \ref{lem: Linf/Linf y} we obtain
\begin{equation}\label{GamGamsin/Gamcosh y}
\frac{\Gamma(1/4+ir(1-\alt))\Gamma(3/4+ir(1-\alt))}{\Gamma(1+2ir)\cosh(\pi r)}\sin\left( \pi(1/4-ir(1-\alt))\right)\sim
e^{2iGY\left(\log(1-\alt_0)-\log2\right)}T^{-1/2}.
\end{equation}
Making in \eqref{Igeq2 as1} the change of variables $r:=T+Gy$, and then applying  \eqref{GamGamsin/Gamcosh y}, \eqref{2F1 main asymptIy} and
\eqref{Vapprox y=0}, we infer that for $G\ll T^{1/2-\epsilon}$
\begin{multline}\label{Igeq2 H0as1}
I(\frac{n}{m},\rho;H_0)\sim \frac{T^{1/2}G V(m,-t,T)}{(1-(1-\alt_0^2)z)^{1/4}}
e^{2iT\La(\alt_0,z)}
\int_{-\infty}^{\infty} e^{-y^2}
e^{2iGy\left(\log(1-\alt_0)-\log(1+\sqrt{1-(1-\alt_0^2)z})+\log\sqrt{z}\right)}dy,
\end{multline}
where
\begin{equation}\label{La def}
\La(\alt_0,z)=-\log(1+\sqrt{1-(1-\alt_0^2)z})+\alt_0\log(\alt_0+\sqrt{1-(1-\alt_0^2)z})+\log\sqrt{z}.
\end{equation}

Similarly, but also using \eqref{Linfty/Linfty}, one has
\begin{multline}\label{Igeq2 Hinfas1}
I(\frac{n}{m},\rho;H_{\infty})\sim \frac{T^{1/2}G V(m,t,T)}{(1-(1-\alt_0^2)z)^{1/4}}
e^{2iT\La(\alt_0,z)}
\int_{-\infty}^{\infty} e^{-y^2}
e^{2iGy\left(\log(1+\alt_0)-\log(1+\sqrt{1-(1-\alt_0^2)z})+\log\sqrt{z}\right)}dy.
\end{multline}
Applying \eqref{exp integral1}, we conclude that
\begin{equation}\label{Igeq2 H0 as2}
I(\frac{n}{m},\rho;H_0)\sim \frac{T^{1/2}G V(m,-t,T)}{(1-(1-\alt_0^2)z)^{1/4}}
e^{2iT\La(\alt_0,z)}
e^{-G^2\log^2\frac{(1-\alt_0)\sqrt{z}}{1+\sqrt{1-(1-\alt_0^2)z}}},
\end{equation}
\begin{equation}\label{Igeq2 Hinf as2}
I(\frac{n}{m},\rho;H_{\infty})\sim \frac{T^{1/2}G V(m,t,T)}{(1-(1-\alt_0^2)z)^{1/4}}
e^{2iT\La(\alt_0,z)}
e^{-G^2\log^2\frac{(1+\alt_0)\sqrt{z}}{1+\sqrt{1-(1-\alt_0^2)z}}}.
\end{equation}
Therefore, for $G\ll T^{1/2-\epsilon}$ one has
\begin{equation}\label{S2 def3}
S_2(t)\ll T^{1/2}G
\sum_{m\ll T^{1+\epsilon}\sqrt{t}}\sum_{2m<n\ll T^{1+\delta}\sqrt{t}}\frac{\mathscr{L}_{n^2-4m^2}(1/2+2it)}{n^{1/2-2it}}\frac{V(m,\pm t,T)}{m^{1/2\pm 2it}}
\LA_{\pm}\left(\frac{4m^2}{n^2}\right),
\end{equation}
where
\begin{equation}\label{S2 LAdef}
\LA_{\pm}\left(z\right)=
\frac{e^{2iT\La(\alt_0,z)}}{(1-(1-\alt_0^2)z)^{1/4}}
\exp\left(-G^2\log^2\frac{(1\pm\alt_0)\sqrt{z}}{1+\sqrt{1-(1-\alt_0^2)z}}\right).
\end{equation}

Let $$f_{\pm}(z):=\log\frac{(1\pm\alt_0)\sqrt{z}}{1+\sqrt{1-(1-\alt_0^2)z}}.$$
The right-hand side of \eqref{S2 def3} is negligibly small unless $|f_{\pm}(z)|\ll G^{-1+\epsilon}.$
Since $$f'_{\pm}(z)=\frac{1}{2z\sqrt{1-(1-\alt_0^2)z}},$$ these functions are increasing on the interval $0<z<1$. Consequently,
\begin{equation}
f_{-}(z)<f_{-}(1)=\log\frac{1-\alt_0}{1+\alt_0}\ll -\alt_0,
\end{equation}
and for $G\alt_0\gg G^{\epsilon}$ one has $|f_{-}(z)|\gg G^{-1+\epsilon}$. As a result, $\LA_{-}\left(z\right)\ll T^{-A}.$ In the plus case, we consider separately  two cases: $0<z<1-\alt_0^2$ and $1-\alt_0^2<z<1$. In the first case, one has
\begin{equation}
f_{+}(z)<f_{-}(1-\alt_0^2)=(1-\sqrt{2})\alt_0+O(\alt_0^2)\ll -\alt_0.
\end{equation}
Therefore, for $G\alt_0\gg G^{\epsilon}$, $0<z<1-\alt_0^2$ the following estimate holds: $\LA_{+}\left(z\right)\ll T^{-A}.$
In the case when $1-\alt_0^2<z<1$, one has
\begin{equation}\label{f+ z=1 est}
g_{+}(z):=f_{+}(z)+\frac{1-z}{4\alt_0}<0,
\end{equation}
and therefore, for $G\alt_0\gg G^{\epsilon}$  the estimate $\LA_{+}\left(\frac{4m^2}{n^2}\right)\ll T^{-A}$ holds unless $|n-2m|\ll m\alt_0G^{-1+\epsilon}$.
Since $g_{+}(1)=0$ to prove \eqref{f+ z=1 est} it is enough to show that $g'_{+}(z)>0$ for $1-\alt_0^2<z<1$.  Note that
\begin{equation}
g''_{+}(z)=\frac{3(1-\alt_0^2)z-2}{2z^2(1-(1-\alt_0^2)z)^{3/2}}>0.
\end{equation}
Therefore, we are left to verify that $g'_{+}(1-\alt_0^2)>0$. This follows from straightforward computations:
\begin{equation}
g'_{+}(1-\alt_0^2)=\frac{1}{2(1-\alt_0^2)(1-(1-\alt_0^2)^2)^{1/2}}-\frac{1}{4\alt_0}=\frac{2-(1-\alt_0^2)(2-\alt_0^2)^{1/2}}{4\alt_0(1-\alt_0^2)(2-\alt_0^2)^{1/2}}>0.
\end{equation}

Finally, we prove the following result.


\begin{prop}
For $G\ll T^{1/2-\epsilon},$ $\alt_0\gg G^{-1+\epsilon}$ one has
\begin{equation}\label{S2 def4}
S_2(t)\ll T^{1/2}G
\sum_{m\ll T^{1+\epsilon}\sqrt{t}}\sum_{0<n-2m\ll m\alt_0G^{-1+\epsilon}}\frac{\mathscr{L}_{n^2-4m^2}(1/2+2it)}{n^{1/2-2it}}\frac{V(m,t,T)}{m^{1/2+2it}}
\LA_{+}\left(\frac{4m^2}{n^2}\right).
\end{equation}
\end{prop}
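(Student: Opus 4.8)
This proposition is essentially a bookkeeping statement: it repackages the truncation of Lemma \ref{Lem I x>>2 est}, the asymptotic formulas \eqref{Igeq2 H0 as2}--\eqref{Igeq2 Hinf as2}, and the sign analysis of $f_\pm$ carried out above. The plan is therefore to chain these together while tracking hypotheses. First I would apply Lemma \ref{Lem I x>>2 est} to drop the tail $n\gg T^{1+\delta}\sqrt t$ from \eqref{S2 def} at the cost of $O(T^{-A})$, reducing $S_2(t)$ to the finite double sum \eqref{S2 def2}. Since all the sums are now finite, I may replace $I(n/m,\rho;H_0)$ and $I(n/m,\rho;H_\infty)$ by the leading terms \eqref{Igeq2 H0 as2} and \eqref{Igeq2 Hinf as2} of their asymptotic expansions; the hypothesis $G\ll T^{1/2-\epsilon}$ enters precisely here, since it is what makes the underlying Gaussian integral evaluation legitimate. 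This produces the intermediate bound \eqref{S2 def3} with the oscillatory weights $\LA_\pm$ of \eqref{S2 LAdef}.

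The second step localises the support of the weights. With $f_\pm(z):=\log\frac{(1\pm\alt_0)\sqrt z}{1+\sqrt{1-(1-\alt_0^2)z}}$, the Gaussian factor in $\LA_\pm$ equals $\exp(-G^2 f_\pm(z)^2)$, which is $\ll T^{-A}$ whenever $|f_\pm(z)|\gg G^{-1+\epsilon}$. Since $f'_\pm(z)=\frac{1}{2z\sqrt{1-(1-\alt_0^2)z}}>0$, each $f_\pm$ is increasing on $(0,1)$, so I would bound it by an endpoint value: $f_-(z)<f_-(1)=\log\frac{1-\alt_0}{1+\alt_0}\ll-\alt_0$ makes the entire $H_0$-contribution negligible once $\alt_0\gg G^{-1+\epsilon}$, and $f_+(z)<f_+(1-\alt_0^2)=(1-\sqrt2)\alt_0+O(\alt_0^2)\ll-\alt_0$ disposes of the $H_\infty$-contribution on $0<z<1-\alt_0^2$. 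On the remaining window $1-\alt_0^2<z<1$ an endpoint bound is useless, so here I would invoke the auxiliary inequality \eqref{f+ z=1 est}, namely $g_+(z):=f_+(z)+\frac{1-z}{4\alt_0}<0$ (itself a short convexity argument: $g_+(1)=0$, $g_+''>0$ and $g_+'(1-\alt_0^2)>0$), which gives $|f_+(z)|>\frac{1-z}{4\alt_0}$ and hence $\LA_+(4m^2/n^2)\ll T^{-A}$ unless $1-4m^2/n^2\ll\alt_0 G^{-1+\epsilon}$. In this surviving window $n\asymp m$, so $1-4m^2/n^2\asymp(n-2m)/m$ and the constraint becomes $0<n-2m\ll m\alt_0 G^{-1+\epsilon}$.

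Putting this together, the $H_0$-term vanishes to all orders, while the $H_\infty$-term survives only over $m\ll T^{1+\epsilon}\sqrt t$ and $0<n-2m\ll m\alt_0 G^{-1+\epsilon}$, which is exactly \eqref{S2 def4}. I do not expect a genuine obstacle here: the heavy lifting---the uniform asymptotics of ${}_2F_1$ from \eqref{2F1 main asymptI}, their reduction to Gaussians, and the evaluation \eqref{exp integral1}---has already been carried out, and what remains is the monotonicity/convexity analysis of $f_\pm$ and $g_+$ together with the routine transfer of the $z$-localisation to an $n$-localisation. The only point demanding care is checking that the standing assumptions accumulated along the way, $G\ll T^{1/2-\epsilon}$ for the asymptotic substitution and $\alt_0\gg G^{-1+\epsilon}$ for the Gaussian truncations, are precisely the ones in the statement, and that the leading-term replacements are uniform over the truncated ranges of $m$ and $n$.
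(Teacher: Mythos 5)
Your proposal reproduces the paper's proof step by step: truncation via Lemma \ref{Lem I x>>2 est}, substitution of the leading-term asymptotics \eqref{Igeq2 H0 as2}--\eqref{Igeq2 Hinf as2} (using $G\ll T^{1/2-\epsilon}$), the monotonicity of $f_\pm$ with endpoint bounds to kill $\LA_-$ and $\LA_+$ on $(0,1-\alt_0^2)$, and the convexity argument for $g_+$ to localise the surviving $\LA_+$ contribution to $0<n-2m\ll m\alt_0 G^{-1+\epsilon}$. This matches the paper's argument exactly (you even correctly read $f_+(1-\alt_0^2)$ where the paper has an evident typo $f_-(1-\alt_0^2)$), so there is nothing to add.
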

\begin{rem}
According to \eqref{LB def} and \eqref{S2 LAdef}
\begin{equation}\label{LA=LB}
V(m,t,T)\LA_{+}\left(\frac{1}{y}\right)=\LB\left(\sgh(H_{\infty}),y\right)e^{-iT(1-\alt_0)\log(1-\alt_0^2)}.
\end{equation}
\end{rem}


\subsection{Preparing to use the Voronoi formula}\label{subsec:S2 beforeVoron}
In this section, we transform the expressions on the right-hand sides of \eqref{S1 est} and \eqref{S2 def4} following the strategy used in \cite[Sec. 4]{BFsym2} in order to obtain a form suitable for the application of the Voronoi summation formula. First, we substitute \eqref{S2 LAdef} and \eqref{La def} to \eqref{S2 def4}, showing that
\begin{equation}\label{S2 def5}
S_2(t)\ll T^{1/2}G
\sum_{m\ll T^{1+\epsilon}\sqrt{t}}\sum_{0<n-2m\ll m\alt_0G^{-1+\epsilon}}\frac{
\Zag_{n^2-4m^2}(1/2+2it)V(m,t,T)}{m\left(\alt^2-1+n^2/(4m^2)\right)^{1/4}}
\LE\left(T,\frac{n}{2m}\right),
\end{equation}
\begin{equation}\label{LE def}
\LE(T,y)=\exp\left(-G^2\log^2\frac{y+\sqrt{y^2-1+\alt_0^2}}{1+\alt_0}\right)e^{2iT\Le(y)},
\end{equation}
\begin{equation}\label{Le def}
\Le(y)=\alt_0\log(y\alt_0+\sqrt{y^2-1+\alt_0^2})-\log(y+\sqrt{y^2-1+\alt_0^2}).
\end{equation}
For the sake of simplicity, we will write $\LE(y)$ instead of $\LE(T,y)$. Now we make the following change of variables in \eqref{S2 def5}:
\begin{equation}\label{change of variables1}
n-2m=q,\quad
n+2m=\rr.
\end{equation}
Doing so, we obtain a congruence condition  $\rr\equiv q\pmod{4}$, which can be rewritten  as in \cite[(4.11)]{BFsym2}. This results in four sums. All these sums can be treated analogously to each other (see \cite[Remark 4.1, 6.1]{BFsym2}). Therefore, as in \cite[Sec. 4]{BFsym2}  it is sufficient to study only one sum. For example, let us consider
\begin{equation}\label{S2 def6}
S_2(t)\ll T^{1/2}G
\sum_{\rr\ll T^{1+\epsilon}\sqrt{t}}\sum_{0<q\ll \rr\alt_0G^{-1+\epsilon}}\frac{
\Zag_{q\rr}(1/2+2it)V\left(\frac{\rr-q}{4},t,T\right)}{(\rr-q)^{1/2}\left(\alt^2(\rr-q)^{2}+4rq\right)^{1/4}}
\LE\left(\frac{\rr+q}{\rr-q}\right).
\end{equation}
Note that the sum over $q$ in \eqref{S2 def6} is much shorter than the one over $\rr$. Accordingly, for $\alt_0G\gg G^{\delta}$ using the Taylor series expansion  one has
\begin{equation}\label{S2 def6}
S_2(t)\ll \frac{T^{1/2}G}{\sqrt{\alt_0}}
\sum_{\rr\ll T^{1+\epsilon}\sqrt{t}}\sum_{0<q\ll \rr\alt_0G^{-1+\epsilon}}\frac{1}{\rr}\Zag_{q\rr}(1/2+2it)V\left(\frac{\rr-q}{4},t,T\right)
\LE\left(\frac{\rr+q}{\rr-q}\right).
\end{equation}
Next, we perform one more change of variables $\rr:=l/q$, obtaining
\begin{equation}\label{S2 def6}
S_2(t)\ll \frac{T^{1/2}G}{\sqrt{\alt_0}}
\sum_{q\ll t^{3/2}G^{-1+\epsilon}}\sum_{\substack{q^2G^{1-\epsilon}/\alt_0\ll l\ll qT^{1+\epsilon}\sqrt{t}\\l\equiv0\pmod{q}}}
\frac{q}{l}\Zag_{l}(1/2+2it)V\left(\frac{l-q^2}{4q},t,T\right)
\LE\left(\frac{l+q^2}{l-q^2}\right).
\end{equation}
Our next step is to estimate trivially the contribution of  $q\ll Q_0$. Performing a dyadic partition of the sum over $l$, applying the Cauchy-Schwartz inequality and  \eqref{LZag 2mom estimate}, we show that for $q^2G^{1-\epsilon}/\alt_0\ll L\ll qT^{1+\epsilon}\sqrt{t}$
\begin{equation}\label{S2 q<Q0}
\sum_{l\equiv0\pmod{q}}
\frac{U(l/L)}{l}\Zag_{l}(1/2+2it)V\left(\frac{l-q^2}{4q},t,T\right)
\LE\left(\frac{l+q^2}{l-q^2}\right)\ll\frac{(L+\sqrt{Lt})^{1/2}L^{\epsilon}}{\sqrt{qL}}.
\end{equation}
Therefore, the contribution of small $q$ is bounded by
\begin{equation}\label{S2 q<Q0 2}
\frac{T^{1/2+\epsilon}G}{\sqrt{\alt_0}}
\sum_{q\ll Q_0}\left(\left(\frac{\alt_0t}{G}\right)^{1/4}+\sqrt{q}\right)\ll
T^{1+\epsilon}G\left(\left(\frac{\alt_0}{tG}\right)^{1/4}Q_0+\frac{Q_0^{3/2}}{\sqrt{t}}\right)\ll T^{1-\epsilon}G,
\end{equation}
provided that
\begin{equation}\label{Q0 conditions}
Q_0\ll t^{1/3}T^{-\epsilon},\quad
Q_0\ll \left(\frac{tG}{\alt_0}\right)^{1/4}T^{-\epsilon}.
\end{equation}
For the rest of computations, we need
\begin{equation}\label{Q0 def}
Q_0=T^{\epsilon}\alt_0\sqrt{t},
\end{equation}
therefore, the conditions \eqref{Q0 conditions} are satisfied if
\begin{equation}\label{Q0 conditions GtT}
G\gg tT^{\epsilon}\alt_0^5, \quad
t\ll T^{6/7-\epsilon}.
\end{equation}
These conditions can be simplified to $t\ll T^{6/7-\epsilon}$ since in Proposition \ref{prop: S1 est1} $G\gg T^{1+\epsilon}/t$, and for
$t\ll T^{6/7-\epsilon}$  one has $T^{1+\epsilon}/t\gg  tT^{\epsilon}\alt_0^5 $. Therefore, from now we may assume that
\begin{equation}\label{Q0 conditions GtT2}
G\alt_0\gg T^{\epsilon}, \quad t\ll T^{6/7-\epsilon}.
\end{equation}
Under these conditions, one has
\begin{equation}\label{S2 def7}
S_2(t)\ll \frac{T^{1/2}G}{\sqrt{\alt_0}}
\sum_{Q_\ll q\ll t^{3/2}G^{-1+\epsilon}}\sum_{\substack{q^2G^{1-\epsilon}/\alt_0\ll l\ll qT^{1+\epsilon}\sqrt{t}\\l\equiv0\pmod{q}}}
\frac{q}{l}\Zag_{l}(1/2+2it)V\left(\frac{l-q^2}{4q},t,T\right)
\LE\left(\frac{l+q^2}{l-q^2}\right).
\end{equation}
\begin{rema}
The restriction $t\ll T^{6/7-\epsilon}$ is not sharp as it is a consequence of the non-optimal  estimate \eqref{S2 q<Q0}. If one can prove the following  generalization of
\eqref{LZag 2mom estimate}
\begin{equation}\label{LZag 2mom estimate AP}
\sum_{\substack{N<n< 2N\\ n\equiv0\pmod{q}}}|\Zag_n(1/2+it)|^2\ll\frac{1}{q}\left(N+\sqrt{N(1+|t|)}\right)\left(N(1+|t|)\right)^{\epsilon},
\end{equation}
then  instead of \eqref{S2 q<Q0 2} we obtain
\begin{equation}\label{S2 q<Q0 2 v2}
\frac{T^{1/2+\epsilon}G}{\sqrt{\alt_0}}
\sum_{q\ll Q_0}\left(\left(\frac{\alt_0t}{q^2G}\right)^{1/4}+1\right)\ll
T^{1+\epsilon}G\left(\left(\frac{\alt_0Q_0^2}{tG}\right)^{1/4}+\frac{Q_0}{\sqrt{t}}\right)\ll T^{1-\epsilon}G.
\end{equation}
This inequality holds for  $Q_0=T^{\epsilon}\alt_0\sqrt{t}$ if $t\ll T^{1-\epsilon_0}.$
\end{rema}
Expressing the congruence condition $l\equiv 0\pmod{q}$ in terms of additive harmonics
\begin{equation}\label{deltaq(l)}
\delta_q(l)=\frac{1}{q}\sum_{c|q}\sum_{\substack{a\pmod{c}\\(a,c)=1}}e\left(\frac{al}{c}\right),
\end{equation}
making the sum over $c$ an outer one and performing a dyadic partition, we obtain
\begin{equation}\label{S2 to S2(CQL)}
S_2(t)\ll \frac{T^{1/2}G}{\sqrt{\alt_0}}
\sum_{C\ll t^{3/2}G^{-1+\epsilon}}\sum_{Q_0/C\ll Q\ll t^{3/2}G^{-1+\epsilon}/C}
\sum_{Q^2C^2G^{1-\epsilon}/\alt_0\ll L\ll QCT^{1+\epsilon}\sqrt{t}}S_2(C,Q,L),
\end{equation}
where
\begin{equation}\label{S2(CQL) def}
S_2(C,Q,L)=
\sum_{c}U\left(\frac{c}{C}\right)\sum_{q}U\left(\frac{q}{Q}\right)
\sum_{\substack{a\pmod{c}\\(a,c)=1}}\sum_{l}
\frac{\Zag_{l}(1/2+2it)}{l^{1/2-it}}\phi_2(l)e\left(\frac{al}{c}\right),
\end{equation}
\begin{equation}\label{S2 phi2 def}
\phi_2(l)=\frac{U(l/L)}{l^{1/2+it}}V\left(\frac{l-(qc)^2}{4qc},t,T\right)
\LE\left(\frac{l+(qc)^2}{l-(qc)^2}\right)
\end{equation}
and $U(x)\neq0$ only if $1/2<x<2.$

Now we are going to transform $S_1(t)$ similarly to $S_2(t)$.  It follows from
Proposition \ref{prop: S1 est1} that
\begin{multline}\label{S1 est2}
S_1(t)\ll T^{1/2}G
\sum_{h=H_0,H_{\infty}}\sum_{m\ll T^{1+\epsilon}\sqrt{t}}\sum_{0<2m-n\ll m\alt_0G^{-1+\epsilon}}
\frac{\mathscr{L}_{n^2-4m^2}(1/2+2it)V(m,\sgh(h)t,T)}{m\left(\alt^2-1+n^2/(4m^2)\right)^{1/4}}\\\times
(4m^2-n^2)^{i(1-\sgh(h))t}
\LE\left(\sgh(h)T,\frac{n}{2m}\right),
\end{multline}
where the function $\LE$ is defined by \eqref{LE def},  $\sgh(h)=-1$ if $h=H_0$ and $\sgh(h)=1$ for $h=H_{\infty}.$ Making  the change of variables
\begin{equation}\label{change of variables2}
2m-n=q,\quad
n+2m=\rr,
\end{equation}
and arguing as  in the case of $S_2(t)$, we conclude that
\begin{equation}\label{S1 to S1(CQL)}
S_1(t)\ll \frac{T^{1/2}G}{\sqrt{\alt_0}}
\sum_{h=H_0,H_{\infty}}
\sum_{C\ll t^{3/2}G^{-1+\epsilon}}\sum_{Q_0/C\ll Q\ll t^{3/2}G^{-1+\epsilon}/C}
\sum_{Q^2C^2G^{1-\epsilon}/\alt_0\ll L\ll QCT^{1+\epsilon}\sqrt{t}}S_1(h,C,Q,L),
\end{equation}
where
\begin{equation}\label{S1(CQL) def}
S_1(h,C,Q,L)=
\sum_{c}U\left(\frac{c}{C}\right)\sum_{q}U\left(\frac{q}{Q}\right)
\sum_{\substack{a\pmod{c}\\(a,c)=1}}\sum_{l}
\frac{\Zag_{-l}(1/2+2it)}{l^{1/2-it}}\phi_1(l)e\left(\frac{-al}{c}\right),
\end{equation}
\begin{equation}\label{S1 phi1 def}
\phi_1(l)=\frac{U(l/L)}{l^{1/2+i\sgh(h)t}}V\left(\frac{l+(qc)^2}{4qc},\sgh(h)t,T\right)
\LE\left(\sgh(h)T,\frac{l-(qc)^2}{l+(qc)^2}\right).
\end{equation}
\section{Integral transforms}\label{sec:integral transforms}
As explained in \cite[Sec. 6]{BFsym2}, it is sufficient to consider only the case $c\equiv0\pmod{4}$. Accordingly, we apply the Voronoi formula \eqref{Voronoi it n>0 c=0 mod 4} to the sum over $l$ in \eqref{S2(CQL) def}.

First of all, it is required to etimate and evaluate the integral transforms of $\phi_2(x)$ arising in  \eqref{Voronoi it n>0 c=0 mod 4}. To this end, we  assume that \eqref{Q0 def} holds and that the conditions \eqref{Q0 conditions GtT2} are satisfied. Under these restrictions, one has
\begin{equation}\label{L conditons}
L\gg\frac{(QC)^2}{\alt_0^2}T^{\epsilon_1},\quad
L\ll\frac{(QC)^2}{\alt_0}T^{1-\epsilon_1}.
\end{equation}
These conditions are satisfied for $L$ in \eqref{S2 def7} since
\begin{equation}\label{L conditons2}
\frac{(QC)^2}{\alt_0^2}T^{\epsilon_1}\ll Q^2C^2G^{1-\epsilon}/\alt_0,\quad
QCT^{1+\epsilon}\sqrt{t}\ll\frac{(QC)^2}{\alt_0}T^{1-\epsilon_1}
\end{equation}
if $\alt_0G\gg T^{\epsilon_0}$, $QC\gg Q_0\gg\alt_0T^{\epsilon_0}\sqrt{t}.$

\begin{lem}\label{lem^phi2+ est}
Suppose that \eqref{Q0 conditions GtT2} and  \eqref{L conditons} hold. Then
\begin{equation}\label{phi2+ est0}
\phi_2^{+}(1/2\pm it)\ll T^{-A}.
\end{equation}
\end{lem}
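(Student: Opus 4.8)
The plan is to show that the Mellin transform $\phi_2^{+}(1/2\pm it)=\int_0^\infty\phi_2(x)\,x^{-1/2\pm it}\,dx$ (see \eqref{phi+ transform def}) is negligible by treating it as an oscillatory integral and applying repeated integration by parts (Lemma~\ref{Lemma BKY}), exploiting the oscillation $e^{2iT\Le(\cdot)}$ built into $\LE$ via \eqref{LE def}--\eqref{Le def}, together with the extra phase $x^{-2it}$ in the case $s=1/2-it$. By \eqref{S2 phi2 def} the integrand is supported on $x\asymp L$ and equals $x^{-1}\,V\!\big(\tfrac{x-(qc)^2}{4qc},t,T\big)\,\LE\!\big(\tfrac{x+(qc)^2}{x-(qc)^2}\big)$ for $s=1/2+it$, with an additional factor $x^{-2it}$ for $s=1/2-it$. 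I would first substitute the Mellin--Barnes representation \eqref{Vapproximation} of $V$ and truncate to $|\Im z|\ll T^\epsilon$ (the remainder being negligible); this expresses $\phi_2^{+}(1/2\pm it)$ as a $z$-integral, and it suffices to bound, uniformly in $z$ in this range, an oscillatory integral with amplitude $g(x)=x^{-1}(x-(qc)^2)^{-z}\exp\!\big(-G^2\log^2\tfrac{y+\sqrt{y^2-1+\alt_0^2}}{1+\alt_0}\big)$, where $y=y(x)=\tfrac{x+(qc)^2}{x-(qc)^2}$, and phase $\psi(x)=2T\Le(y(x))$, resp. $\psi(x)-2t\log x$; the remaining $z$-dependent prefactors are $\ll T^\epsilon$ with rapidly decaying dependence on $\Im z$, so the $z$-integral converges and costs only $T^\epsilon$.

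The key input is the phase derivative. Since $y-1=\tfrac{2(qc)^2}{x-(qc)^2}\asymp (qc)^2/x$ and $x\,y'(x)=-\tfrac12(y^2-1)$, while differentiating \eqref{Le def} yields the closed form $\Le'(y)=-\dfrac{1-\alt_0^2}{y\alt_0+\sqrt{y^2-1+\alt_0^2}}$, one finds
\[
x\,\frac{d}{dx}\big(2T\Le(y(x))\big)=\frac{T(1-\alt_0^2)(y^2-1)}{y\alt_0+\sqrt{y^2-1+\alt_0^2}}\asymp\frac{T(qc)^2}{\alt_0 x}\asymp\frac{T(QC)^2}{\alt_0 L}.
\]
For $s=1/2+it$ this is, up to the $\ll T^\epsilon$ contribution of the truncated $V$, the full scaled phase derivative, and the upper bound $L\ll(QC)^2\alt_0^{-1}T^{1-\epsilon_1}$ in \eqref{L conditons} makes it $\gg T^{\epsilon_1}$; there is no stationary point. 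For $s=1/2-it$ one subtracts $2t$, and the lower bound $L\gg(QC)^2\alt_0^{-2}T^{\epsilon_1}$ in \eqref{L conditons} forces the main term to be $\ll tT^{-\epsilon_1}$, so the scaled phase derivative equals $2t\,(1+O(T^{-\epsilon_1}))\gg t$; again no stationary point and, crucially, no cancellation against the phase of $x^{-2it}$. In both cases one also checks $x^2|\psi''|\ll x|\psi'|$ from the same formulas.

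It remains to feed this into Lemma~\ref{Lemma BKY} with $R\asymp|\psi'|$, $X\asymp T^\epsilon/L$, $b-a\asymp L$, $P\asymp x$, $Y\asymp x|\psi'|$, and with the BKY amplitude scale controlled by the logarithmic derivative of $g$. The one non-routine point is that $g$ varies slowly relative to $\psi$: differentiating the exponential damping factor and using that on the non-negligible range $\log\tfrac{y+\sqrt{y^2-1+\alt_0^2}}{1+\alt_0}\ll G^{-1+\epsilon}$ (this was established when deriving \eqref{S2 def4}) produces a contribution $\ll G^2\cdot G^{-1+\epsilon}\cdot\alt_0^{-1}\cdot|y'(x)|\ll\tfrac{G^{1+\epsilon}(qc)^2}{\alt_0 x^2}$ to $g'(x)$, so the scaled logarithmic derivative of $g$ is $\ll T^\epsilon+\tfrac{G^{1+\epsilon}(qc)^2}{\alt_0 x}$. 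The $T^\epsilon$ term is $\ll T^{-\delta}\,x|\psi'|$ since $x|\psi'|\gg T^{\epsilon_1}$ (resp. $\gg t\gg T^\epsilon$), and the other term is $\ll T^{-\delta}\,x|\psi'|$ since it equals $\tfrac{G^{1+\epsilon}}{2T}$ times the main scaled phase derivative while $G^{1+\epsilon}\ll T^{1-\delta}$ in the relevant range. Hence $\tfrac{1}{RV}+\tfrac{1}{RP}+\tfrac{Y}{R^2P^2}\ll T^{-\delta}$, and Lemma~\ref{Lemma BKY} with $A$ large gives $(b-a)X\cdot T^{-\delta A}\ll T^{-A}$; restoring the $z$-integration yields $\phi_2^{+}(1/2\pm it)\ll T^{-A}$. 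I expect the bookkeeping of this last step --- verifying that the exponential damping factor's variation is dominated by the oscillation throughout the ranges of $G$, $L$, $t$ allowed by \eqref{L conditons} and \eqref{Q0 conditions GtT2} --- to be the only real obstacle; the remaining estimates are routine.
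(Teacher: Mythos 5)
Your proposal is correct and follows essentially the same route as the paper: both exhibit $\phi_2^{+}(1/2\pm it)$ as an oscillatory integral whose phase is governed by $\Le$ (plus the extra $-\alt_0\log y$ term in the ``$-$'' case), use the closed-form derivative \eqref{phi2+ h1 deriv} to identify $R(+)\asymp T(qc)^2/(L\alt_0)$ and $R(-)\asymp T\alt_0$, verify that the damping factor in $\LE$ and the weight $V$ contribute only flat ($T^\epsilon$) logarithmic derivatives, and invoke Lemma~\ref{Lemma BKY} together with \eqref{Q0 conditions GtT2} and \eqref{L conditons}. The only cosmetic differences are your initial Mellin--Barnes expansion of $V$ (the paper simply absorbs $V$ into the amplitude and checks its derivatives directly) and your change of variables $x=yL$ versus the paper's normalized $y$.
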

\begin{proof}
Using  \eqref{S2 phi2 def}, \eqref{LE def} one has
\begin{equation}\label{phi2+ est1}
\phi_2^{+}(1/2\pm it)\sim\int_0^{\infty}U_2(y)e^{2iTF_2(y,\pm)}\frac{dy}{y},
\end{equation}
where
\begin{equation}\label{phi2+ U2def}
U_2(y)=U(y)V\left(\frac{yL-(qc)^2}{4qc},t,T\right)
\exp\left(-G^2\log^2\frac{z+\sqrt{z^2-1+\alt_0^2}}{1+\alt_0}\right),
\end{equation}
\begin{equation}\label{phi2+ F2def}
F_2(y,\pm)=\Le(z)+(\pm1-1)\alt_0\log\sqrt{y},\quad  z=\frac{yL+(qc)^2}{yL-(qc)^2}.
\end{equation}
To estimate \eqref{phi2+ est1} one may apply Lemma \ref{Lemma BKY}.
To this end, it is required to define the parameters  $P,R,V,X,Y$ in \eqref{BKYconditions}.
First, note that
\begin{equation}\label{phi2+ z-1est}
z=1+\frac{2(qc)^2}{yL-(qc)^2}=1+O(\frac{\alt_0G^{\epsilon}}{G}),\quad\hbox{so that}\quad
z^2-1\ll\frac{\alt_0G^{\epsilon}}{G}\ll\alt_0^2,
\end{equation}
and
\begin{equation}\label{phi2+ sqrt deriv z}
\sqrt{z^2-1+\alt_0^2}\sim\alt_0,\quad
\frac{dz}{dy}=\frac{-2L(qc)^2}{(yL-(qc)^2)^2}\sim\frac{(qc)^2}{L}.
\end{equation}
Using \eqref{Le def} one has
\begin{equation}\label{phi2+ h1 deriv}
\frac{d}{dz}\Le(z)=\frac{\alt_0^2-1}{\alt_0z+\sqrt{z^2-1+\alt_0^2}}.
\end{equation}
Finally, since $\frac{(qc)^2}{L\alt_0}\ll\alt_0$, we conclude that
\begin{equation}\label{phi2+ F2 deriv}
\frac{d}{dy}F_2(y,\pm)\gg \frac{(qc)^2}{L\alt_0}+(\pm1-1)\alt_0.
\end{equation}
Therefore, we choose the parameter $R$ in \eqref{BKYconditions} as follows:
\begin{equation}\label{phi2+ Rdef}
R(+)=\frac{T(qc)^2}{L\alt_0},\quad
R(-)=T\alt_0.
\end{equation}
Now let us consider the second partial derivative:
\begin{multline}\label{phi2+ F2 2deriv}
\frac{d^2}{dy^2}F_2(y,\pm)\sim\frac{d^2}{dz^2}\Le(z)\frac{(qc)^4}{L^2}+
\frac{d}{dz}\Le(z)\frac{(qc)^2}{L}+(\pm1-1)\alt_0\sim
\frac{(qc)^4}{L^2\alt_0^3}+\\+
\frac{(qc)^2}{L\alt_0}+(\pm1-1)\alt_0\sim
\frac{(qc)^2}{L\alt_0}+(\pm1-1)\alt_0.
\end{multline}
Arguing in the same way to estimate $F^{(j)}_2(y,\pm)$, we infer
\begin{equation}\label{phi2+ YQdef}
Y(\pm)=R(\pm),\quad
P(\pm)=1.
\end{equation}
Estimating \eqref{phi2+ U2def} trivially one has  $X=T^{\epsilon}$. Finally,
\begin{multline}\label{phi2+ U2deriv}
\frac{d}{dy}\exp\left(-G^2\log^2\frac{z+\sqrt{z^2-1+\alt_0^2}}{1+\alt_0}\right)\ll
G^2\log\left(\frac{z+\sqrt{z^2-1+\alt_0^2}}{1+\alt_0}\right)\frac{1}{\sqrt{z^2-1+\alt_0^2}}\frac{dz}{dy}\ll\\\ll
G^{1+\epsilon}\frac{(qc)^2}{L\alt_0}\ll T^{\epsilon}.
\end{multline}
since $L\gg Q^2C^2G^{1-\epsilon}/\alt_0$. Hence  $V=T^{-\epsilon}$ and applying Lemma \ref{Lemma BKY} one has
\begin{equation}\label{phi2+ est2}
\phi_2^{+}(1/2\pm it)\ll
\frac{T^{\epsilon}}{\left(R(\pm)T^{-\epsilon}\right)^{A}}\ll T^{-A}
\end{equation}
provided that $R(\pm)\gg T^{\epsilon_0}.$ These inequalities follow from \eqref{phi2+ Rdef} together with \eqref{Q0 conditions GtT2} and \eqref{L conditons}.
\end{proof}

Now we are going to investigate the integral transforms of  $\phi_2(x)$ arising on the right-hand side of \eqref{Voronoi it n>0 c=0 mod 4}.

\begin{lem}\label{lem^phi2+hat est}
Suppose that \eqref{Q0 conditions GtT2} and  \eqref{L conditons} are satisfied. Then
\begin{equation}\label{phi2+hat  est1}
\widehat{\phi_2}^{+}\left(\frac{4\pi^2n}{c^2}\right)\ll \frac{1}{(nT)^{A}}\quad\hbox{for}\quad
n\gg\frac{C^2t^2}{L}T^{\epsilon},
\end{equation}
\begin{equation}\label{phi2+hat  est2}
\widehat{\phi_2}^{+}\left(\frac{4\pi^2n}{c^2}\right)\ll \frac{1}{T^{A}}\quad\hbox{for}\quad
0<n\ll\frac{C^2t^2}{L}T^{\epsilon}.
\end{equation}
\end{lem}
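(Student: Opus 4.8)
The plan is to insert the definitions, expand the Bessel functions by Dunster's uniform asymptotics, and reduce \eqref{phi2+hat  est1}--\eqref{phi2+hat  est2} to bounds for one-dimensional oscillatory integrals handled by Lemma \ref{Lemma BKY}. By \eqref{phi+ transform def} and \eqref{Phi++ itdef},
\[
\widehat{\phi_2}^{+}\left(\frac{4\pi^2 n}{c^2}\right)=\frac{1}{\sqrt 2}\int_{0}^{\infty}\frac{\phi_2(x)}{x}\,\frac{2\pi\sqrt{nx}}{c}\left(F_{2it}\left(\frac{4\pi\sqrt{nx}}{c}\right)-G_{2it}\left(\frac{4\pi\sqrt{nx}}{c}\right)\right)dx .
\]
Writing the Bessel argument as $2tz$ with $z=2\pi\sqrt{nx}/(ct)$, inserting \eqref{GF2tz asympt} (valid for all $z>0$, since by \eqref{FG Dun difeq} there is no turning point on the positive axis, and here $z\gg T^{-1}$), and substituting $x=Ly$ with $y\asymp 1$ on the support of $U(x/L)$, one gets
\[
\widehat{\phi_2}^{+}\left(\frac{4\pi^2 n}{c^2}\right)\sim\sum_{\pm}\int A_{\pm}(y)\,e^{i\Psi_{\pm}(y)}\,dy,\qquad \Psi_{\pm}(y)=-t\log(Ly)+2T\Le(z_2)\pm 2t\xi(z),
\]
where $z_2=\frac{Ly+(qc)^2}{Ly-(qc)^2}$ as in \eqref{phi2+ F2def} and $A_{\pm}$ collects $U(y)/y$, the Bessel amplitude $(1+z^2)^{-1/4}$, the factor $V$ and the Gaussian of \eqref{S2 phi2 def}--\eqref{LE def}, and a power of $x$; it is polynomially bounded in $n$ and $T$ with logarithmic derivatives $\ll T^{\epsilon}$ on the support, which is exactly the bookkeeping already done in the proof of Lemma \ref{lem^phi2+ est} (see \eqref{phi2+ z-1est}--\eqref{phi2+ U2deriv}, using $L\gg Q^2C^2G^{1-\epsilon}/\alt_0$). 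The turning point $z\asymp 1$ corresponds to $n\asymp C^2t^2/L$, which is exactly the threshold in \eqref{phi2+hat  est1}--\eqref{phi2+hat  est2}: for $n\gg C^2t^2/L$ one has $z\gg 1$, the Bessel functions are in the oscillatory regime beyond the turning point, and $\xi(z)\sim z$ so that $2t\xi(z)\approx\frac{4\pi\sqrt{nx}}{c}$.

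The heart of the argument is the size of $\Psi_{\pm}'$. With $\xi'(v)=\sqrt{1+v^2}/v$ from \eqref{GF2tz xi def}, $z'(y)=z/(2y)$, and $\Le'(z_2)=\frac{\alt_0^2-1}{\alt_0 z_2+\sqrt{z_2^2-1+\alt_0^2}}\sim-\frac{1}{2\alt_0}$ (by \eqref{phi2+ h1 deriv}, using $z_2=1+O(\alt_0 G^{-1+\epsilon})$ and $\sqrt{z_2^2-1+\alt_0^2}\sim\alt_0$ as in \eqref{phi2+ z-1est}), one finds
\[
y^{2}\,\Psi_{\pm}'(y)\sim\frac{2T(qc)^{2}}{\alt_0 L}-ty\left(1\mp\sqrt{1+z^{2}}\right),
\]
the first term being positive. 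For the sign $+$ both terms are nonnegative, so $|y^{2}\Psi_{+}'(y)|\gg\frac{T(qc)^{2}}{\alt_0 L}\gg T^{\epsilon_1}$ by the upper bound for $L$ in \eqref{L conditons} ($q\asymp Q$, $c\asymp C$ on the dyadic support), and $\gg\sqrt{nL}/C$ once $n\gg C^2t^2/L$. For the sign $-$ we have a difference of positive quantities, whose unique zero $y_{\ast}$ satisfies $y_{\ast}\asymp\frac{(qc)^2}{\alt_0^{2}L(1+\sqrt{1+z^{2}})}\le\frac{(qc)^2}{\alt_0^{2}L}\ll T^{-\epsilon_1}$ by the \emph{lower} bound for $L$ in \eqref{L conditons}; hence $y_{\ast}$ lies far below the support $[1/2,2]$, and on the support $y^{2}\Psi_{-}'$ has a fixed sign with $|y^{2}\Psi_{-}'(y)|\gg ty\asymp t$ (and $\gg\sqrt{nL}/C$ once $n\gg C^2t^2/L$). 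The higher derivatives of $\Psi_{\pm}$ are of size $O(T^{\epsilon})$ as in the proof of Lemma \ref{lem^phi2+ est}, and the factor $V$ contributes only a phase $e^{-iv\log x}$ with $|v|\ll T^{\epsilon}\ll T^{\epsilon_1}$, which does not affect these lower bounds.

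Now Lemma \ref{Lemma BKY} concludes. With parameters $R=T^{\epsilon_1}$, $P\asymp V\asymp 1$, $X$ polynomially bounded and $Y\ll R$, it gives $\widehat{\phi_2}^{+}(4\pi^2 n/c^2)\ll T^{-A}$ for $0<n\ll C^2t^2T^{\epsilon}/L$, which is \eqref{phi2+hat  est2}. For $n\gg C^2t^2T^{\epsilon}/L$ one may instead take $R\asymp\sqrt{nL}/C\gg\max(n^{1/2},tT^{\epsilon/2})$ (recall $L\gg C^2$ and $t\gg T^{\epsilon}$), and choosing the exponent in Lemma \ref{Lemma BKY} large enough turns $X/R^{A}$ into $\ll (nT)^{-A}$, which is \eqref{phi2+hat  est1}. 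The remainder in the Poincar\'e-type expansion \eqref{GF2tz asympt} carries the same oscillation $e^{\pm 2it\xi(z)}$, so it is treated identically with an extra saving $\ll t^{-N}$.

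The step I expect to be the main obstacle is ruling out a stationary point of $\Psi_{-}$. The phase from $\LE$ (derivative of size $T(qc)^2/(L\alt_0)$) and the phase from the Bessel function (of size $t\sqrt{1+z^2}$) can be comparable and of opposite sign, so a priori such a point exists; it is precisely the lower bound $L\gg (QC)^2T^{\epsilon_1}/\alt_0^{2}$ in \eqref{L conditons} --- itself forced by the Gaussian weight in $\phi_2$ restricting $l\gg (qc)^2G^{1-\epsilon}/\alt_0$ in \eqref{S2 def7} --- that pushes it below the support of $U(x/L)$, so that it never actually occurs. Verifying this uniformly in the dyadic parameters $q\asymp Q$, $c\asymp C$ and across the regimes $z\ll 1$, $z\asymp 1$, $z\gg 1$ (where the shape of $\xi$ changes) is the technically delicate point; everything else reduces to estimates already established for $\phi_2^{+}$.
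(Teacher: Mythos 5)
Your proposal is essentially correct and follows the same overall strategy as the paper, with one genuine methodological difference. For the range $0<n\ll C^2t^2T^{\epsilon}/L$ you do exactly what the paper does: substitute Dunster's expansion \eqref{GF2tz asympt}, identify the phase, and apply Lemma~\ref{Lemma BKY}; your derivative computation $y^{2}\Psi_{\pm}'(y)\sim\frac{2T(qc)^{2}}{\alt_0 L}-ty(1\mp\sqrt{1+z^{2}})$ is a change-of-variables rescaling of \eqref{phi2+hat F3deriv}, and your observation that the $+$ case has no cancellation while the $-$ case has a would-be stationary point pushed below the support by the lower bound in \eqref{L conditons} is precisely the content of \eqref{phi2+hat F3deriv2}. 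Where you diverge from the paper is in the range $n\gg C^2t^2T^{\epsilon}/L$: the paper does not use Dunster's expansion there; it changes variables via $x=\frac{4t^2}{y^2}z^2$, invokes the ODE \eqref{FG Dun difeq}, and integrates by parts twice, gaining $\frac{t^2}{1+t^2(1+z^2)}\ll z^{-2}\ll(nT)^{-\epsilon}$ at each step. You instead continue with the Dunster expansion uniformly in $z$ (legitimate, since \eqref{FG Dun difeq} has no turning point on the positive axis) and run non-stationary phase with $R\asymp\sqrt{nL}/C\gg\max(n^{1/2},tT^{\epsilon/2})$. Both routes work; the paper's ODE argument avoids having to justify uniformity of the Poincar\'e expansion for unbounded~$z$ and avoids bookkeeping $n$-dependence in the amplitude, whereas yours is more uniform in presentation.

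One inaccuracy worth flagging: the statement that ``the higher derivatives of $\Psi_{\pm}$ are of size $O(T^{\epsilon})$'' is not correct. For the $-$ sign, $\Psi_{-}^{(k)}(y)\asymp t$ for $y\asymp 1$; for the $+$ sign the second derivative from the Bessel phase is $\asymp t\alpha^{4}$ (with $\alpha=2\pi\sqrt{nL}/(ct)$), which is $\asymp R\cdot\alpha^{2}$ and hence can exceed $T^{\epsilon}$; and in the large-$n$ range all higher derivatives of the Bessel phase are $\asymp\sqrt{nL}/C\asymp R$. In all cases one should take $Y\asymp RT^{\epsilon}$ rather than $Y\ll T^{\epsilon}$, exactly as the paper does (see $Y(\pm)=R(\pm)$ in \eqref{phi2+ YQdef} and $Y=PR$ in \eqref{phi2+hat F3 QY def}); then $\frac{Y}{R^{2}P^{2}}\ll\frac{T^{\epsilon}}{R}\ll T^{-\epsilon_1/2}$ and Lemma~\ref{Lemma BKY} still gives the claimed bounds. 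So the conclusion survives, but the stated reason does not.
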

\begin{proof}
Let $y=\frac{2\pi\sqrt{n}}{c}$. It follows from \eqref{phi+ transform def}, \eqref{Phi++ itdef} and \eqref{S2 phi2 def} that
\begin{equation}\label{phi2+hat  int1}
\widehat{\phi_2}^{+}\left(y^2\right)=
\frac{y}{\sqrt{2}}\int_{0}^{\infty}\left(F_{2it}(2y\sqrt{x})-G_{2it}(2y\sqrt{x}\right)
\frac{U(x/L)}{x^{1+it}}V\left(\frac{x-(qc)^2}{4qc},t,T\right)
\LE\left(\frac{x+(qc)^2}{x-(qc)^2}\right)dx.
\end{equation}
Due to the properties of functions  $F_{2it}(z)$ and $G_{2it}(z)$,  the two integrals in the formula above can be treated in the same way.
To prove \eqref{phi2+hat  est1}, we first perform the change of  variables $x=\frac{4t^2}{y^2}z^2$, then use the relation
\eqref{FG Dun difeq}, and finally, integrate by parts twice, obtaining
\begin{multline}\label{phi2+hat  int2}
\widehat{\phi_2}^{+}\left(y^2\right)\ll
y\int_{0}^{\infty}\frac{d^2}{dz^2}\Biggl(U\left(\frac{4t^2z^2}{y^2L}\right)
\frac{z^{1/2-2it}}{1+16t^2(1+z^2)}
V\left(\frac{4t^2z^2/y^2-(qc)^2}{4qc},t,T\right)\\\times
\LE\left(\frac{z^2+(yqc/2t)^2}{z^2-(yqc/2t)^2}\right)\Biggr)\sqrt{z}F_{2it}(2tz)dz.
\end{multline}
Since $n\gg\frac{C^2t^2}{L}T^{\epsilon}$, we conclude that on the interval of integration  $z\sim\frac{y\sqrt{L}}{t}\gg (nT)^{\epsilon}$. Using \eqref{LE def},
\eqref{phi2+ h1 deriv} and \eqref{phi2+ U2deriv} one has
\begin{multline}\label{phi2+hat  int3}
\frac{d}{dz}\LE\left(\frac{z^2+(yqc/2t)^2}{z^2-(yqc/2t)^2}\right)\ll\LE'\left(\frac{z^2+(yqc/2t)^2}{z^2-(yqc/2t)^2}\right)\frac{(yqc/t)^2}{z^3}\ll\\\lll
\left(\frac{T}{\alt_0}+G^{1+\epsilon}\frac{(qc)^2}{L\alt_0}\right)\frac{q^2c^2}{Lz}
\ll\frac{Tq^2c^2}{Lz\alt_0}\ll\frac{T^{1+\epsilon}}{zG}\ll\frac{t}{z}.
\end{multline}
Accordingly, after integration by parts we obtain an integral similar to the original one but multiplied by $$\frac{t^2}{1+t^2(1+z^2)}\ll\frac{1}{z^2}\ll\frac{1}{(nT)^{\epsilon}}.$$ Repeated application of such integration by parts leads to \eqref{phi2+hat  est1}.

In the remaining case: $0<n\ll\frac{C^2t^2}{L}T^{\epsilon}$, we first make the change of variables $x=c^2z$ in \eqref{phi2+hat  int1}:
\begin{equation}\label{phi2+hat  int4}
\widehat{\phi_2}^{+}\left(y^2\right)\ll
y\int_{0}^{\infty}\left(F_{2it}(4\pi\sqrt{nz})-G_{2it}(4\pi\sqrt{nz}\right)
\frac{U(c^2z/L)}{z^{1+it}}V\left(c\frac{z-q^2}{4q},t,T\right)
\LE\left(\frac{z+q^2}{z-q^2}\right)dz.
\end{equation}
Now we are going to substitute the asymptotic formula \eqref{GF2tz asympt}. As usual, it is enough to consider only the contribution of the main term
\begin{equation}\label{phi2+hat  int5}
\widehat{\phi_2}^{+}\left(y^2\right)\ll
\frac{\sqrt{n}}{c\sqrt{t}}\sum_{\pm}\int_{0}^{\infty}U_3(z)e^{2iTF_3(z,\pm)}dz,
\end{equation}
where
\begin{equation}\label{phi2+hat U3def}
U_3(z)=
\frac{U(c^2z/L)}{z\left(1+\frac{4\pi^2nz}{t^2}\right)^{1/4}}V\left(c\frac{z-q^2}{4q},t,T\right)
\exp\left(-G^2\log^2\frac{x(z)+\sqrt{x^2(z)-1+\alt_0^2}}{1+\alt_0}\right),
\end{equation}
\begin{equation}\label{phi2+hat F3def}
F_3(z,\pm)=\Le(x(z))\pm\alt_0\xi\left(\frac{2\pi\sqrt{nz}}{t}\right)-\alt_0\log\sqrt{z},\quad  x(z)=\frac{z+q^2}{z-q^2}.
\end{equation}
Now we argue in the same way as in Lemma \ref{lem^phi2+ est}.
To estimate \eqref{phi2+hat  int5}, one can apply Lemma \ref{Lemma BKY}.
To this end, we need to define parameters  $P,R,V,X,Y$ in \eqref{BKYconditions}.
Using \eqref{GF2tz xi def} and \eqref{phi2+ h1 deriv}, we obtain
\begin{equation}\label{phi2+hat F3deriv}
\frac{d}{dz}F_3(z,\pm)=\frac{1-\alt_0^2}{\alt_0x(z)+\sqrt{x(z)^2-1+\alt_0^2}}\frac{2q^2}{(z-q^2)^2}
\pm\frac{\alt_0}{2z}\sqrt{1+\frac{4\pi^2nz}{t^2}}-\frac{\alt_0}{2z}.
\end{equation}
Since $z\sim L/c^2$, it follows from \eqref{L conditons} that $\frac{q^2}{\alt_0z^2}\ll\frac{\alt_0}{z}T^{-\epsilon}$. Therefore,
\begin{equation}\label{phi2+hat F3deriv2}
\left|\frac{d}{dz}F_3(z,-)\right|\gg\frac{\alt_0}{2z},\quad
\frac{d}{dz}F_3(z,+)\gg\frac{q^2}{\alt_0z^2}+\frac{\alt_0n}{t^2\sqrt{1+\frac{4\pi^2nz}{t^2}}}\gg
\frac{q^2}{\alt_0z^2}+\frac{\alt_0n}{t^2T^{\epsilon}}.
\end{equation}
Note that the derivative is smaller in the plus case, and therefore, it is sufficient to study further only this case. Accordingly, we choose
\begin{equation}\label{phi2+hat F3 R def}
R:=T\frac{q^2c^4}{\alt_0L^2}+\frac{\alt_0n}{t^2}T^{1-\epsilon}.
\end{equation}
Evaluating the second derivative of $F_3(z,+)$, one has
\begin{equation}\label{phi2+hat F3 2deriv}
\frac{d^2}{dz^2}F_3(z,+)\ll\frac{1}{\alt_0^3}\frac{q^4}{z^4}+\frac{q^2}{\alt_0z^3}+\frac{\alt_0n^2}{t^4}\ll
\frac{q^2}{\alt_0z^3}+\frac{\alt_0n^2}{t^4}\ll
\frac{q^2}{\alt_0z^3}+\frac{\alt_0n}{t^2}\frac{T^{\epsilon}}{z}.
\end{equation}
Therefore,
\begin{equation}\label{phi2+hat F3 QY def}
P=\frac{L}{c^2T^{\epsilon}},\quad Y=PR.
\end{equation}
Calculating the derivatives of $U_3(z)$, using the estimate
\begin{equation}\label{phi2+hat U3deriv}
\frac{d}{dz}\exp\left(-G^2\log^2\frac{x(z)+\sqrt{x^2(z)-1+\alt_0^2}}{1+\alt_0}\right)\ll\frac{G^{1+\epsilon}}{\alt_0}\frac{dx}{dz}\ll
\frac{G^{1+\epsilon}q^2}{\alt_0z^2}\ll\frac{G^{1+\epsilon}(qc)^2}{\alt_0Lz}\ll\frac{T^{\epsilon}}{z},
\end{equation}
which holds since  $L\gg(QC)^2G^{1-\epsilon}/\alt_0$, we finally conclude that $V=\frac{L}{c^2T^{\epsilon}}.$
Applying Lemma \ref{Lemma BKY} to estimate \eqref{phi2+hat  int5} we infer
\begin{equation}\label{phi2+hat  int6}
\widehat{\phi_2}^{+}\left(y^2\right)\ll
\frac{\sqrt{n}}{c\sqrt{t}}\left(T\frac{q^2c^2}{\alt_0LT^{\epsilon}}+\frac{\alt_0Ln}{c^2t^2}T^{1-\epsilon}\right)^{-A}.
\end{equation}
Suppose that $n\gg\frac{(qc^2t)^2}{(\alt_0L)^2}$.  Under this assumption, using \eqref{L conditons} one has
\begin{equation}\label{phi2+hat  int7.1}
\frac{\alt_0Ln}{c^2t^2}T^{1-\epsilon}\gg T^{1-\epsilon}\frac{(qc)^2}{\alt_0L}\gg T^{\epsilon},
\end{equation}
and therefore,
\begin{equation}\label{phi2+hat  int7}
\widehat{\phi_2}^{+}\left(y^2\right)\ll
\frac{\sqrt{n}}{c\sqrt{t}}\left(\frac{\alt_0Ln}{c^2t^2}T^{1-\epsilon}\right)^{-A}\ll T^{-A}.
\end{equation}
Suppose that $0<n\ll\frac{(qc^2t)^2}{(\alt_0L)^2}$.  Under this assumption,  according to  \eqref{L conditons} one has the estimate
\begin{equation}\label{phi2+hat  int8}
\widehat{\phi_2}^{+}\left(y^2\right)\ll
\frac{\sqrt{n}}{c\sqrt{t}}\left(T\frac{q^2c^2}{\alt_0LT^{\epsilon}}\right)^{-A}\ll T^{-A}.
\end{equation}
This completes the proof.
\end{proof}

\begin{lem}\label{lem:phi2+hat<0 est}
Suppose that \eqref{Q0 conditions GtT2} and  \eqref{L conditons} hold. Then
\begin{equation}\label{phi2+hat<0  est1}
\widehat{\phi_2}^{+}\left(-\frac{4\pi^2n}{c^2}\right)\ll \frac{1}{(nT)^{A}},\quad\hbox{for}\quad
0<n\ll\frac{Q^2C^4t^2}{\alt_0^2L^2},\quad
n\gg\frac{Q^2C^4t^2}{\alt_0^2L^2},
\end{equation}
\begin{equation}\label{phi2+hat<0  sp1}
\widehat{\phi_2}^{+}\left(-\frac{4\pi^2n}{c^2}\right)\sim
\frac{U_0(z_0)e^{2iTF_0(z_0)}}{c^{1+2it}\sqrt{z_0}},
\quad\hbox{for}\quad n\sim\frac{Q^2C^4t^2}{\alt_0^2L^2},
\end{equation}
where
\begin{equation}\label{U0def}
U_0(z)=U\left(\frac{c^2z}{L}\right)V\left(c\frac{z-q^2}{4q},t,T\right)
\exp\left(-G^2\log^2\frac{x(z)+\sqrt{x^2(z)-1+\alt_0^2}}{1+\alt_0}\right),
\end{equation}
\begin{equation}\label{F0def}
F_0(z)=\Le\left(\frac{z+q^2}{z-q^2}\right)-\alt_0\eta\left(\frac{2\pi\sqrt{nz}}{t}\right)-\alt_0\log\sqrt{z},
\end{equation}
$\eta(v)$ is defined in \eqref{K2tz eta def} and $z_0$ is a solution of
\begin{equation}\label{F5-z0}
\frac{1-\alt_0^2}{\alt_0x(z)+\sqrt{x(z)^2-1+\alt_0^2}}\frac{2q^2}{(z-q^2)^2}=\frac{2\pi^2n\alt_0/t^2}{1+\sqrt{1-\frac{4\pi^2nz}{t^2}}},
\quad  x(z)=\frac{z+q^2}{z-q^2}.
\end{equation}
Moreover, one has $z_0\approx\frac{qt}{\alt_0\sqrt{n}}$ and $z_0$ is independent of $c$.
\end{lem}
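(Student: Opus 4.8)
The plan is to follow the scheme of Lemma~\ref{lem^phi2+hat est}, with the $K$-Bessel function in place of $F_{2it},G_{2it}$; the one genuinely new feature is that $K_{2it}(2tw)$ has a turning point at $w=1$ (cf.\ \eqref{K Dun difeq}), which $F_{2it},G_{2it}$ lack, and this is where the real work lies. Inserting \eqref{Phi-+ itdef} and \eqref{S2 phi2 def} into \eqref{phi+ transform def}, changing variables $x=c^2z$ as in \eqref{phi2+hat int4}, and applying Stirling's formula \eqref{Stirling2} (the modulus of $\Gamma(1/4+it)\Gamma(3/4-it)$ being $\asymp e^{-\pi t}$, which cancels the exponential in the $K$-Bessel asymptotics), I reduce to
\begin{multline*}
\widehat{\phi_2}^{+}\!\left(-\frac{4\pi^2n}{c^2}\right)\ll\frac{\sqrt n}{c}\int_0^\infty e^{\pi t}K_{2it}\!\left(4\pi\sqrt{nz}\right)\\
\times\frac{U(c^2z/L)}{z^{1+it}}\,V\!\left(c\,\frac{z-q^2}{4q},t,T\right)\LE\!\left(\frac{z+q^2}{z-q^2}\right)dz ,
\end{multline*}
so that $z\sim L/c^2$ on the support of the integrand and the Bessel argument $4\pi\sqrt{nz}$ is $\gtrless 2t$ according as $z\gtrless z_\star:=t^2/(4\pi^2n)$; this suggests organising the proof by the position of the turning point $z_\star$ relative to the support, i.e.\ by the size of $n$.

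If $n\gg C^2t^2/L$ (a subrange of $n\gg Q^2C^4t^2/(\alpha_0^2L^2)$ by \eqref{L conditons}), then $z_\star$ lies below the whole support, so $4\pi\sqrt{nz}-2t\gg t^{1/3+\epsilon}$ there and \eqref{K2tz z>1 est} gives $\widehat{\phi_2}^{+}\ll(nT)^{-A}$. If $n\ll C^2t^2/L$, then $z<z_\star$ on the whole support, so I substitute the oscillatory asymptotic \eqref{K2tz asympt}; one of the two resulting phases is non-stationary there (its derivative being $\gg tc^2/L$, so its contribution is $\ll(nT)^{-A}$ by Lemma~\ref{Lemma BKY}), while the other gives $\frac{\sqrt n}{c\sqrt t}\int U_0(z)e^{2iTF_0(z)}dz$ with $U_0,F_0$ as in \eqref{U0def},\eqref{F0def} (up to the factor $(1-4\pi^2nz/t^2)^{-1/4}$, which is $\approx1$ where it matters). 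By \eqref{K2tz eta def} and \eqref{phi2+ h1 deriv}, $F_0'(z)=0$ is precisely equation \eqref{F5-z0}; since $x(z)\approx1$ and $\sqrt{x(z)^2-1+\alpha_0^2}\approx\alpha_0$ for $z\sim L/c^2$ (using $z\gg q^2/\alpha_0^2$, which follows from \eqref{L conditons}), $F_0'$ is the difference of a strictly decreasing term $\asymp q^2/(\alpha_0z^2)$ and an essentially constant term $\asymp n\alpha_0/t^2$, so there is a unique simple stationary point $z_0\approx qt/(\alpha_0\sqrt n)$, independent of $c$ by \eqref{F5-z0}, and it lies in the support exactly when $n\asymp Q^2C^4t^2/(\alpha_0^2L^2)$.

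For $n$ away from $Q^2C^4t^2/(\alpha_0^2L^2)$ the point $z_0$ lies outside the support, one of the two terms of $F_0'$ dominates throughout, and Lemma~\ref{Lemma BKY} — with bounds on $F_0^{(i)},U_0^{(j)}$ obtained as in \eqref{phi2+hat F3 2deriv},\eqref{phi2+hat U3deriv} (the latter using $L\gg(QC)^2G^{1-\epsilon}/\alpha_0$) — gives $\ll(nT)^{-A}$, the relevant products $RV,RP$ being $\gg T^{\epsilon_1}$ by \eqref{L conditons}. For $n\asymp Q^2C^4t^2/(\alpha_0^2L^2)$ the stationary point lies in the support and well below $z_\star$ (so $v(z_0)\ll1$ and the amplitude is smooth there); applying the stationary phase lemma, with $|F_0''(z_0)|\asymp q^2/(\alpha_0z_0^3)$ which together with the prefactors supplies the normalisation $c^{-1-2it}z_0^{-1/2}$ (the $c^{-1-2it}$ coming from $x^{-1/2-it}=c^{-1-2it}z^{-1/2-it}$), one extracts exactly the leading term \eqref{phi2+hat<0 sp1}, which is visibly $c$-independent in $z_0$ as required.

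There remains the transition range $n\asymp C^2t^2/L$, where $z_\star$ lies inside the support; here $n\gg Q^2C^4t^2/(\alpha_0^2L^2)$, so only the bound $\ll(nT)^{-A}$ is needed, and, crucially, $z_0$ still lies outside the support, so the combined phase has no stationary point. I split the $z$-integral into three pieces. On $z-z_\star\gg z_\star t^{-2/3+\epsilon}$ the Bessel factor is exponentially small by \eqref{K2tz z>1 est}. On $z_\star-z\gg z_\star t^{-2/3+\epsilon}$ I use \eqref{K2tz asympt} together with Lemma~\ref{Lemma BKY} on a dyadic decomposition of $z_\star-z$: the $\eta$-term makes $|F_0'|\gg n\alpha_0/t^2$ throughout, which is large enough to absorb the shrinking smoothness scale ($\sim z_\star-z$) of the amplitude factor $(1-4\pi^2nz/t^2)^{-1/4}$, so each dyadic piece still satisfies $RV,RP\gg T^{\epsilon}$. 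Finally, on the short window $|z-z_\star|\ll z_\star t^{-2/3+\epsilon}$ I invoke the uniform Airy-type asymptotic for $K_{2it}$ near $w=1$ (from \cite{Balogh} and \cite[sec.~4]{Dun90Bessel}; cf.\ \eqref{K2tz z=1 est}), which turns the integral into one of the shape $\int(\text{slowly varying})\,Ai\big((2t)^{2/3}\hat{\zeta}(z)\big)e^{2iT\psi(z)}\,dz$; since the combined phase is non-stationary, the required saving comes from one more integration by parts, using the Airy differential equation \eqref{K Dun difeq} (or the oscillation of $e^{2iT\Le(x(z))}$, whose derivative is $\gg Tq^2c^4/(\alpha_0L^2)$) — this is exactly the "integrals of Airy functions and some exponents" alluded to in Section~\ref{sec:2mom t to LZag}. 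I expect this turning-point window, and the bookkeeping needed to keep the hypotheses of Lemma~\ref{Lemma BKY} valid as $z\to z_\star$, to be the main obstacle, and it is precisely the point at which the $K$-Bessel case is heavier than Lemma~\ref{lem^phi2+hat est}.
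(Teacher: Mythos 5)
Your proposal reproduces the paper's overall architecture: set $y=2\pi\sqrt n/c$, write $\widehat{\phi_2}^{+}(-y^2)$ as an integral involving $e^{\pi t}K_{2it}(4\pi\sqrt{nz})$ after the substitution $x=c^2z$, dispose of $n\gg C^2t^2T^\epsilon/L$ by the rapid decay of $K$, and for smaller $n$ split according to the position of the turning point $z_\star=t^2/(4\pi^2n)$ and, within the oscillatory regime, according to whether the saddle $z_0\approx qt/(\alt_0\sqrt n)$ of the phase $F_0(z)=\Le(x(z))-\alt_0\eta(\cdots)-\alt_0\log\sqrt z$ lies in the support, which happens precisely when $n\asymp N:=Q^2C^4t^2/(\alt_0^2L^2)$; for $n\sim N$ you apply stationary phase (the paper cites \cite[Proposition 8.2]{BKY}) and obtain \eqref{phi2+hat<0  sp1}. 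This is essentially the paper's proof.

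Two remarks on the transition window $|z-z_\star|\ll z_\star t^{-2/3+\epsilon}$, which is the one place where your route deviates. First, the paper does not pass to an Airy uniform asymptotic for $K_{2it}$: it keeps $K_{2it}(4\pi\sqrt{nz})$ inside the amplitude $U_4(z)$, controls its smoothness there by the derivative bounds \eqref{K2tz z=1 est} from \cite{BST} (giving $V\asymp L/(c^2t^{2/3})$, $X\asymp t^{-1/3}$), and applies Lemma~\ref{Lemma BKY} against the explicit phase $F_4(z)=\Le(x(z))-\alt_0\log\sqrt z$; the conclusion is $\ll t^{-A/3}$, which suffices. Your Airy route is heavier but workable, and you do implicitly identify the same mechanism. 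Second, and this is the soft spot: in the parenthetical you propose to integrate against ``the oscillation of $e^{2iT\Le(x(z))}$, whose derivative is $\gg Tq^2c^4/(\alt_0 L^2)$.'' That is the \emph{smaller} piece of the explicit phase. Under \eqref{L conditons} one has $q^2/(\alt_0 z^2)\ll(\alt_0/z)T^{-\epsilon}$ on the support, so the dominant oscillation comes from the factor $z^{-it}$ (i.e.\ the $-\alt_0\log\sqrt z$ term in $F_4$), giving $T|F_4'|\gg T\alt_0/z\asymp tc^2/L$; this is the $R$ the paper uses. If you only exploit $\Le(x(z))$, then $RV$ with $V\asymp L/(c^2t^{2/3})$ need not be $\gg T^\epsilon$, so that alternative as stated does not close the estimate — you must include the $z^{-it}$ oscillation (or, equivalently, use your other alternative via the Bessel differential equation \eqref{K Dun difeq}). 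Elsewhere, where you propose a dyadic decomposition of $z_\star-z$ for $z_\star-z\gg z_\star t^{-2/3+\epsilon}$, the paper instead takes the single worst-case smoothness scale $V=L/(c^2t^{2/3-\epsilon})$ for the amplitude $\chi_0(\cdot)(1-4\pi^2nz/t^2)^{-1/4}$ and applies Lemma~\ref{Lemma BKY} once; this is equivalent but shorter.
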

\begin{proof}
Let $y=2\pi\sqrt{n}/c.$ It follows from \eqref{phi+ transform def}, \eqref{Phi-+ itdef} and \eqref{S2 phi2 def} that
\begin{multline}\label{phi2+hat<0  int1}
\widehat{\phi_2}^{+}\left(-y^2\right)=
\frac{2y}{\pi}\sin(\pi/4+\pi it)\\ \times \int_{0}^{\infty}K_{2it}(2y\sqrt{x})
\frac{U(x/L)}{x^{1+it}}V\left(\frac{x-(qc)^2}{4qc},t,T\right)
\LE\left(\frac{x+(qc)^2}{x-(qc)^2}\right)dx.
\end{multline}
For $n\gg\frac{C^2t^2}{L}T^{\epsilon}$  the integral \eqref{phi2+hat<0  int1} is bounded by $(nT)^{-A}$. The proof of this fact is similar to the one of \eqref{phi2+hat  est1},
since the differential equations \eqref{K Dun difeq} and \eqref{FG Dun difeq} differ slightly from each other in the range $z\gg T^{\epsilon}.$
From now on, we may assume that $0<n\ll\frac{C^2t^2}{L}T^{\epsilon}$. Making the change of variables $x=c^2z$ in \eqref{phi2+hat<0  int1}, one has
\begin{equation}\label{phi2+hat<0 int2}
\widehat{\phi_2}^{+}\left(-y^2\right)=
\frac{2y}{\pi c^{2it}}\sin(\pi/4+\pi it)\int_{0}^{\infty}K_{2it}(4\pi\sqrt{nz})
\frac{U(c^2z/L)}{z^{1+it}}V\left(c\frac{z-q^2}{4q},t,T\right)
\LE\left(\frac{z+q^2}{z-q^2}\right)dz.
\end{equation}
The behaviour of $K_{2it}(x)$ is different for small $x$, large $x$ and $x\sim1$. Accordingly, we use the following smooth partition of unity:
\begin{equation}\label{partition of 1}
\chi_0\left(\frac{2\pi\sqrt{nz}}{t}\right)+\chi_1\left(\frac{2\pi\sqrt{nz}}{t}\right)+\chi_{\infty}\left(\frac{2\pi\sqrt{nz}}{t}\right)=1,
\end{equation}
where
\begin{equation}\label{partition of 1 0}
\chi_0(x)=1\,\hbox{ for }\,0<x<1-2t^{\epsilon-2/3},\,
\chi_0(x)=0\,\hbox{ for }\,x>1-t^{\epsilon-2/3},
\end{equation}
\begin{equation}\label{partition of 1 infty}
\chi_{\infty}(x)=1\,\hbox{ for }\,x>1+2t^{\epsilon-2/3},\,
\chi_{\infty}(x)=0\,\hbox{ for }\,x<1+t^{\epsilon-2/3},
\end{equation}
\begin{equation}\label{partition of 1 1}
\chi_{1}(x)=1\,\hbox{ for }\,|x-1|<t^{\epsilon-2/3},\,
\chi_{1}(x)=0\,\hbox{ for }\,|x-1|>2t^{\epsilon-2/3}.
\end{equation}
The part of the integral \eqref{phi2+hat<0 int2} with $\chi_{\infty}\left(\frac{2\pi\sqrt{nz}}{t}\right)$ is negligible in view of \eqref{K2tz z>1 est}. Let us consider the part with  $\chi_{1}$ function. One has
\begin{equation}\label{phi2+hat<0 int3}
\widehat{\phi_2}^{+}\left(-y^2\right)\ll
\int_{0}^{\infty}U_4(z)e^{2iTF_4(z)}dz,
\end{equation}
where
\begin{multline}\label{phi2+hat U4def}
U_4(z)=e^{\pi t}K_{2it}(4\pi\sqrt{nz})\chi_{1}\left(\frac{2\pi\sqrt{nz}}{t}\right)
\frac{U(c^2z/L)}{z}V\left(c\frac{z-q^2}{4q},t,T\right)\\\times
\exp\left(-G^2\log^2\frac{x(z)+\sqrt{x^2(z)-1+\alt_0^2}}{1+\alt_0}\right),
\end{multline}
\begin{equation}\label{phi2+hat F4def}
F_4(z)=\Le(x(z))-\alt_0\log\sqrt{z}.
\end{equation}
Once again we intend to apply Lemma \ref{Lemma BKY}. With this goal, we compute derivatives of $F_4(z)$ (see \eqref{phi2+hat F3deriv} for similar computations) and choose parameters in \eqref{BKYconditions} as folows:
\begin{equation}\label{F4 QRY def}
R=\frac{tc^2}{L},\,
P=\frac{c^2}{L},\, Y=t.
\end{equation}
According to \eqref{K2tz z=1 est} one has
\begin{equation}
e^{\pi t}\frac{\partial^j}{\partial z^j}K_{2it}(4\pi\sqrt{nz})\ll\frac{(t^{2/3}/z)^j}{t^{1/3}}.
\end{equation}
Therefore, evaluating the derivatives of $U_4(z)$, we set $$V=\frac{L}{c^2t^{2/3}}$$ in  Lemma \ref{Lemma BKY}. Finally, we  show that
\begin{equation}\label{phi2+hat<0 int4}
\widehat{\phi_2}^{+}\left(-y^2\right)\ll
\frac{T^{\epsilon}}{t^{1/3}}\left(\frac{1}{(RP)^A}+\frac{1}{(RV)^A}\right)\ll t^{-A/3}.
\end{equation}

It is left to consider the part of the integral \eqref{phi2+hat<0 int2} with $\chi_{0}\left(\frac{2\pi\sqrt{nz}}{t}\right)$.
As usual, it is enough to consider only the contribution of the main term in the asymptotic expansion \eqref{K2tz asympt}:
\begin{equation}\label{phi2+hat<0 int5}
\widehat{\phi_2}^{+}\left(-y^2\right)\sim\frac{\sqrt{n}}{c^{1+2it}\sqrt{t}}
\int_{0}^{\infty}U_5(z)e^{2iTF_5(z,\pm)}dz,
\end{equation}
where
\begin{equation}\label{phi2+hat U5def}
U_5(z)=\chi_{0}\left(\frac{2\pi\sqrt{nz}}{t}\right)
\frac{U(c^2z/L)}{z\left(1-\frac{4\pi^2nz}{t^2}\right)^{1/4}}V\left(c\frac{z-q^2}{4q},t,T\right)
\exp\left(-G^2\log^2\frac{x+\sqrt{x^2-1+\alt_0^2}}{1+\alt_0}\right),
\end{equation}
\begin{equation}\label{phi2+hat F5def}
F_5(z,\pm)=\Le(x(z))\pm\alt_0\eta\left(\frac{2\pi\sqrt{nz}}{t}\right)-\alt_0\log\sqrt{z},\quad  x(z)=\frac{z+q^2}{z-q^2}.
\end{equation}
Note that $U_5(z)\neq0$ only if $$n<\frac{c^2t^2}{2\pi^2L}(1-t^{-2/3+\epsilon})^2.$$
Using \eqref{K2tz eta def} and \eqref{phi2+ h1 deriv}, we prove that
\begin{equation}\label{phi2+hat F5deriv}
\frac{d}{dz}F_5(z,\pm)=\frac{1-\alt_0^2}{\alt_0x(z)+\sqrt{x(z)^2-1+\alt_0^2}}\frac{2q^2}{(z-q^2)^2}
\mp\frac{\alt_0}{2z}\sqrt{1-\frac{4\pi^2nz}{t^2}}-\frac{\alt_0}{2z}.
\end{equation}
One has $$\frac{d}{dz}F_5(z,+)\gg\frac{\alt_0}{2z}.$$ Consequently, this case is easier than the second one:
\begin{equation}\label{phi2+hat F5deriv-}
\frac{d}{dz}F_5(z,-)=\frac{1-\alt_0^2}{\alt_0x(z)+\sqrt{x(z)^2-1+\alt_0^2}}\frac{2q^2}{(z-q^2)^2}
-\frac{2\pi^2n\alt_0/t^2}{1+\sqrt{1-\frac{4\pi^2nz}{t^2}}}.
\end{equation}
The saddle point $z_0$ is a solution of the equation \eqref{F5-z0}.
It seems difficult to write down an explicit formula for $z_0$. Nevertheless, we know that the left-hand side of \eqref{F5-z0} is approximately $q^2\alt_0^{-1}z_0^{-2}$, and therefore,
$z_0\approx\frac{qt}{\alt_0\sqrt{n}}$. This point belongs to the interval of integration only if
\begin{equation}\label{N def}
n\sim\frac{q^2c^4t^2}{\alt_0^2L^2}=:N.
\end{equation}
Hence it is required to consider three cases: $n\ll N,$ $n\sim N$ and $n\gg N$.


First, suppose that $n\ll N.$ In order to apply Lemma \ref{Lemma BKY}, it is require to choose parameters in \eqref{BKYconditions}.
According to \eqref{L conditons} one has $$\frac{q^2c^4t^2}{\alt_0^2L^2}\ll\frac{c^2t^2}{2L}T^{-\epsilon}.$$ As a result, for $n<cN$ the following estimate holds:  $$\frac{4\pi^2nz}{t^2}\ll T^{-\epsilon}.$$
Therefore,  $\chi_{0}\left(\frac{2\pi\sqrt{nz}}{t}\right)=1$. Furthermore, it is possible to expand $\left(1-\frac{4\pi^2nz}{t^2}\right)^{1/4}$ in  the Taylor series so that $U_5(z)$ simplifies to
\begin{equation}\label{phi2+hat U5def2}
U_5(z)=
\frac{U(c^2z/L)}{z}V\left(c\frac{z-q^2}{4q},t,T\right)
\exp\left(-G^2\log^2\frac{x+\sqrt{x^2-1+\alt_0^2}}{1+\alt_0}\right).
\end{equation}
Consequently, (as in the case of the $U_3(z)$ function in Lemma \ref{lem^phi2+hat est})  we choose $V=L/(c^2T^{\epsilon})$ in  \eqref{BKYconditions}.
Moreover, since $\frac{4\pi^2nz}{t^2}\ll T^{-\epsilon}$ it follows from \eqref{phi2+hat F3deriv}, \eqref{phi2+hat F5deriv-} that the parameters for $F_5$ can be identified  in the same manner as for $F_3$ (see \eqref{phi2+hat F3 R def} and \eqref{phi2+hat F3 QY def}):
\begin{equation}\label{phi2+hat F5 RPY def}
R=T\frac{q^2c^4}{\alt_0L^2}+\frac{\alt_0n}{t^2}T\gg T\frac{q^2c^4}{\alt_0L^2}, \quad P=\frac{L}{c^2},\quad Y=PR.
\end{equation}
Finally, using \eqref{I BKY est} and \eqref{L conditons} we infer that
\begin{equation}\label{phi2+hat<0 int6}
\int_{0}^{\infty}U_5(z)e^{2iTF_5(z,\pm)}dz\ll T^{-A}.
\end{equation}

Second, suppose that $N\ll n<\delta\frac{c^2t^2}{L}.$  The constant $\delta$ is chosen in such a way that
\begin{equation}
\chi_{0}\left(\frac{2\pi\sqrt{nz}}{t}\right)=1\quad\hbox{and}\quad\left(1-\frac{4\pi^2nz}{t^2}\right)^{1/4}\gg1.
\end{equation}
This guarantees that as in the previous case one can take $V=L/(c^2T^{\epsilon})$ in  \eqref{BKYconditions}.  Furthermore,
\begin{equation}\label{phi2+hat F5deriv-2}
T|\frac{d}{dz}F_5(z,-)|\gg R:=T\frac{n\alt_0}{t^2}=\frac{n}{t},
\end{equation}
\begin{equation}\label{phi2+hat F5- 2deriv}
\frac{d^2}{dz^2}F_5(z,-)\ll \frac{q^2}{\alt_0z^3}+
\frac{n\alt_0}{t^2}\frac{n}{t^2}.
\end{equation}
Therefore, since $n/t^2\ll c^2/L$ one has $P=\frac{L}{c^2}$, $Y=PR.$ It follows from  \eqref{L conditons}, \eqref{N def} that
\begin{equation}
RP>RV>\frac{nL}{tc^2T^{\epsilon}}\gg\frac{NL}{tc^2T^{\epsilon}}\gg\frac{q^2c^2t}{\alt_0^2LT^{\epsilon}}\gg
\frac{Q^2C^2T^{1-\epsilon}}{\alt_0L}\gg  T^{\epsilon}.
\end{equation}
As a result,
\begin{equation}\label{phi2+hat<0 int7}
\int_{0}^{\infty}U_5(z)e^{2iTF_5(z,\pm)}dz\ll \frac{1}{(RV)^{A}}+\frac{1}{(RP)^{A}}\ll T^{-A}.
\end{equation}

Third, suppose that $$\delta\frac{c^2t^2}{L}<n<\frac{c^2t^2}{2\pi^2L}(1-t^{-2/3+\epsilon})^2.$$ In that case, the parameter $R$ is the same as in the previous case, namely $$R=T\frac{n\alt_0}{t^2}=\frac{n}{t},$$ but $\chi_{0}(\cdot)$ may not be equal to one, and therefore,
\begin{equation}\label{U5 deriv}
\frac{d}{dz}
\frac{\chi_{0}\left(\frac{2\pi\sqrt{nz}}{t}\right)}{\left(1-\frac{4\pi^2nz}{t^2}\right)^{1/4}}\ll
\frac{\sqrt{nz}}{tz}\frac{t^{2/3-\epsilon}}{\left(1-\frac{4\pi^2nz}{t^2}\right)^{1/4}}+\left(1-\frac{4\pi^2nz}{t^2}\right)^{-5/4}\frac{n}{t^2}\ll
\left(1-\frac{4\pi^2nz}{t^2}\right)^{-1/4}\frac{t^{2/3-\epsilon}}{z}.
\end{equation}
Accordingly, we set $$V=\frac{L}{c^2t^{2/3-\epsilon}}, X=t^{1/6-\epsilon}$$ in  \eqref{BKYconditions}. It is left to determine the parameters $P$ and $Y$. Note that
\begin{equation}\label{phi2+hat F5- 2deriv2}
\frac{d^2}{dz^2}F_5(z,-)\ll \frac{q^2}{\alt_0z^3}+
\frac{n\alt_0}{t^2}\frac{n}{t^2\sqrt{1-\frac{4\pi^2nz}{t^2}}}.
\end{equation}
Moreover, while computing higher derivatives, a multiple $\frac{n}{t^2\left(1-\frac{4\pi^2nz}{t^2}\right)}$ appears. Hence $$P=\frac{L}{t^{2/3-\epsilon}c^2}, \quad Y=PR.$$ Furthermore, since
\begin{equation}
RP=RV>\frac{n}{t}\frac{L}{t^{2/3-\epsilon}c^2}\gg
t^{1/3+\epsilon},
\end{equation}
we prove that
\begin{equation}\label{phi2+hat<0 int8}
\int_{0}^{\infty}U_5(z)e^{2iTF_5(z,\pm)}dz\ll \frac{t^{1/6-\epsilon}}{(RV)^{A}}+\frac{t^{1/6-\epsilon}}{(RP)^{A}}\ll T^{-A}.
\end{equation}

Finally, it is  left to consider the case $n\sim\frac{q^2c^4t^2}{\alt_0^2L^2}=N$ (that is to prove \eqref{phi2+hat<0  sp1}). In this case,  the function $U_5(z)$ may be simplified to \eqref{phi2+hat U5def2} (and thus $V=\frac{L}{c^2T^{\epsilon}}$).  Evaluating the derivative of \eqref{phi2+hat F5deriv-}, we show that
\begin{multline}\label{phi2+hat F5- 2deriv}
\frac{d^2}{dz^2}F_5(z,-)=
\frac{1-\alt_0^2}{\left(\alt_0x(z)+\sqrt{x(z)^2-1+\alt_0^2}\right)^2}\left(\alt_0+\frac{x(z)}{\sqrt{x(z)^2-1+\alt_0^2}}\right)\frac{4q^4}{(z-q^2)^4}-\\
-\frac{1-\alt_0^2}{\alt_0x(z)+\sqrt{x(z)^2-1+\alt_0^2}}\frac{4q^2}{(z-q^2)^3}
-\frac{\alt_0(2\pi^2n/t^2)^2}{\left(1+\sqrt{1-\frac{4\pi^2nz}{t^2}}\right)^2\sqrt{1-\frac{4\pi^2nz}{t^2}}}.
\end{multline}
Up to some constants,
\begin{equation}\label{phi2+hat F5- 2deriv2}
\frac{d^2}{dz^2}F_5(z,-)\sim
\frac{q^4}{\alt_0^3z^4}-\frac{q^2}{\alt_0z^3}-\frac{\alt_0n^2}{t^4}.
\end{equation}
Since $n\sim N$, one has $\frac{\alt_0n^2}{t^4}\sim\frac{q^4}{\alt_0^3z^4}\ll\frac{q^2}{\alt_0z^3}T^{-\epsilon}$. Therefore,
\begin{equation}\label{phi2+hat F5- 2deriv3}
\frac{d^2}{dz^2}F_5(z,-)\sim-\frac{q^2}{\alt_0z^3} \quad\hbox{and}\quad
|\frac{d^j}{dz^j}F_5(z,-)|\ll\frac{q^2}{\alt_0z^{1+j}}.
\end{equation}
Using \eqref{F5-z0}, one can write down an asymptotic formula for $F_5''(z_0,-)$, but  the main term  in this formula will still depend on $z_0$:
\begin{multline}\label{phi2+hat F5- 2derivasympt}
\frac{d^2}{dz^2}F_5(z_0,-)=
-\frac{1-\alt_0^2}{\alt_0x(z_0)+\sqrt{x(z_0)^2-1+\alt_0^2}}\frac{4q^2}{(z-q^2)^3}\left(1+O(T^{-\epsilon})\right)=\\=
\frac{-4\pi^2n\alt_0/t^2}{\left(1+\sqrt{1-\frac{4\pi^2nz}{t^2}}\right)(z_0-q^2)}\left(1+O(T^{-\epsilon})\right)=
\frac{-2\pi^2n\alt_0}{z_0t^2}\left(1+O(T^{-\epsilon})\right).
\end{multline}

In order to obtain an asymptotic expansion for \eqref{phi2+hat<0 int5}, we apply the saddle point method in a form given in \cite[Proposition 8.2]{BKY}.
We use \cite[Proposition 8.2]{BKY} with the parameters:
\begin{equation}\label{BKY8.2 param}
X=\frac{c^2T^{\epsilon}}{L},\quad
V=\frac{L}{c^2T^{\epsilon}},\quad
V_1=\frac{L}{c^2},\quad
Y=\frac{Tq^2c^2}{\alt_0L},\quad
Q=\frac{L}{c^2}.
\end{equation}
The conditions in \cite[Proposition 8.2]{BKY}, namely $V_1>V>QT^{\epsilon}/\sqrt{Y}$ and $Y>T^{\epsilon}$, are satisfied in view of
\eqref{L conditons}. Finally,  using \eqref{phi2+hat F5- 2derivasympt}, one has
\begin{equation}\label{phi2+hat<0 int sp}
\widehat{\phi_2}^{+}\left(-y^2\right)\sim\frac{\sqrt{n}}{c^{1+2it}\sqrt{t}}
\frac{U_5(z_0)e^{2iTF_5(z_0,-)}}{\sqrt{T|F_5''(z_0,-)|}}\sim
\frac{\sqrt{z_0}}{c^{1+2it}}
U_5(z_0)e^{2iTF_5(z_0,-)}.
\end{equation}
\end{proof}

Our next step is to evaluate and estimate the integral transforms of $\phi_1(x)$ (see \eqref{S1 phi1 def}) arising in  \eqref{Voronoi it n<0 c=0 mod 4}. To this end, one can use techniques similar to those applied for $\phi_2(x).$

\begin{lem}\label{lem^phi1- est}
Suppose that \eqref{Q0 conditions GtT2} and  \eqref{L conditons} hold. Then
\begin{equation}\label{phi1- est0}
\phi_1^{-}(1/2\pm it)\ll T^{-A}.
\end{equation}
\end{lem}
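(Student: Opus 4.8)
The plan is to follow exactly the strategy of Lemma \ref{lem^phi2+ est}, applying the stationary phase estimate of Lemma \ref{Lemma BKY} after writing $\phi_1^{-}(1/2\pm it)$ as an oscillatory integral. First I would use the definitions \eqref{phi- transform def}, \eqref{S1 phi1 def} to write
\begin{equation}
\phi_1^{-}(1/2\pm it)\sim\int_0^{\infty}U_1(y)e^{2iTF_1(y,\pm)}\frac{dy}{y},
\end{equation}
where, in analogy with \eqref{phi2+ U2def}--\eqref{phi2+ F2def},
\begin{equation}
U_1(y)=U(y)\,V\left(\frac{yL+(qc)^2}{4qc},\sgh(h)t,T\right)
\exp\left(-G^2\log^2\frac{x(y)+\sqrt{x^2(y)-1+\alt_0^2}}{1+\alt_0}\right),
\end{equation}
\begin{equation}
F_1(y,\pm)=\Le(x(y))+(\pm1-1)\alt_0\log\sqrt{y},\quad x(y)=\frac{yL-(qc)^2}{yL+(qc)^2}.
\end{equation}
Note that here $x(y)=1-2(qc)^2/(yL+(qc)^2)$ lies just \emph{below} $1$, whereas in Lemma \ref{lem^phi2+ est} the corresponding quantity lay just above $1$; but the same bounds $x^2-1+\alt_0^2\sim\alt_0^2$ and $|x^2-1|\ll\alt_0 G^{\epsilon}/G$ hold by the same computation, using $L\gg(QC)^2G^{1-\epsilon}/\alt_0$ from \eqref{L conditons}.

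Next I would compute $\frac{d}{dy}F_1(y,\pm)$. Using \eqref{phi2+ h1 deriv} (the formula for $\frac{d}{dz}\Le(z)$) together with $\frac{dx}{dy}\sim(qc)^2/L$, the same reasoning as in \eqref{phi2+ F2 deriv} gives
\begin{equation}
\left|\frac{d}{dy}F_1(y,\pm)\right|\gg\frac{(qc)^2}{L\alt_0}+(\pm1-1)\alt_0,
\end{equation}
so one takes $R(+)=T(qc)^2/(L\alt_0)$, $R(-)=T\alt_0$ exactly as in \eqref{phi2+ Rdef}, and the higher derivatives satisfy $F_1^{(j)}(y,\pm)\ll R(\pm)$, yielding $Y(\pm)=R(\pm)$, $P(\pm)=1$. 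Estimating $U_1$ trivially gives $X=T^{\epsilon}$; differentiating the Gaussian-in-$\log$ factor as in \eqref{phi2+ U2deriv} and using $L\gg Q^2C^2G^{1-\epsilon}/\alt_0$ once more gives $V=T^{-\epsilon}$. Then Lemma \ref{Lemma BKY} produces
\begin{equation}
\phi_1^{-}(1/2\pm it)\ll\frac{T^{\epsilon}}{(R(\pm)T^{-\epsilon})^A}\ll T^{-A},
\end{equation}
since $R(\pm)\gg T^{\epsilon_0}$ follows from \eqref{Q0 conditions GtT2} and \eqref{L conditons}, just as at the end of the proof of Lemma \ref{lem^phi2+ est}.

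I do not expect any essential obstacle: the only place requiring genuine care is checking that the sign change $x(y)\to 1^{-}$ (rather than $1^{+}$) does not spoil the estimates $\sqrt{x^2-1+\alt_0^2}\sim\alt_0$ and the lower bound for $F_1'$; this is handled by the bound $|x^2-1|\ll\alt_0^2$, which holds on the support of $U$ by \eqref{L conditons} in precisely the same way as \eqref{phi2+ z-1est}. One must also confirm that the weight $V$ has the same trivial size and derivative bounds whether its middle argument is $t$ or $-t$, which is immediate from \eqref{Vestimate} and \eqref{Vapproximation} since those hold uniformly for $0\le|t|<t_j^{1-\epsilon}$. Everything else is a verbatim repetition of the argument already carried out for $\phi_2^{+}$.
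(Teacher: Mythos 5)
Your proposal follows exactly the paper's route: write $\phi_1^{-}(1/2\pm it)$ as an oscillatory integral of the form $\int_0^\infty U(y)e^{i\,(\text{phase})}\,dy/y$ and run the stationary-phase bound of Lemma \ref{Lemma BKY} with the same choice of parameters as in Lemma \ref{lem^phi2+ est}; the paper's own proof is essentially the three displayed formulas plus the sentence ``Now arguing as in Lemma \ref{lem^phi2+ est} we prove \eqref{phi1- est0}.'' One small imprecision: your phase $2iTF_1(y,\pm)$ with $F_1=\Le(x)+(\pm1-1)\alt_0\log\sqrt{y}$ is only what actually comes out of the definitions when $\sgh(h)=1$. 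Tracking the $h$-dependence (the weight $\phi_1$ carries $x^{-1/2-i\sgh(h)t}$ and the factor $\LE(\sgh(h)T,\cdot)$) gives the paper's $e^{2i\sgh(h)TF_6(y,\pm)}$ with $F_6(y,\pm)=\Le(z)+(\pm\sgh(h)-1)\alt_0\log\sqrt{y}$; when $\sgh(h)=-1$ this swaps which sign of $\pm$ produces the ``purely $\Le'$'' derivative and which produces the ``$\alt_0/y$-dominated'' derivative, and flips the overall sign on $\Le$. Since the argument only needs $|F'|\gg R$ for some $R\gg T^{\epsilon_0}$, and in both cases one gets $R\in\{T(qc)^2/(L\alt_0),\,T\alt_0\}$, the final estimate is unaffected — but the bookkeeping as written does not match the integrand when $h=H_0$. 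Your remark about $x(y)\to 1^-$ (rather than $1^+$) and the check that $1-x^2\ll\alt_0^2$ on the support of $U$ is exactly the right point to verify, and is handled by \eqref{L conditons} as you say.
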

\begin{proof}
Using  \eqref{S1 phi1 def}, \eqref{LE def} one has
\begin{equation}\label{phi2+ est1}
\phi_1^{-}(1/2\pm it)\sim\int_0^{\infty}U_6(y)e^{2i\sgh(h)TF_6(y,\pm)}\frac{dy}{y},
\end{equation}
where
\begin{equation}\label{phi1- U6def}
U_6(y)=U(y)V\left(\frac{yL+(qc)^2}{4qc},\sgh(h)t,T\right)
\exp\left(-G^2\log^2\frac{z+\sqrt{z^2-1+\alt_0^2}}{1+\alt_0}\right),
\end{equation}
\begin{equation}\label{phi1- F6def}
F_6(y,\pm)=\Le(z)+(\pm\sgh(h)-1)\alt_0\log\sqrt{y},\quad  z=\frac{yL-(qc)^2}{yL+(qc)^2}.
\end{equation}
Now arguing as in Lemma \ref{lem^phi2+ est} we prove \eqref{phi1- est0}.
\end{proof}


\begin{lem}\label{lem:phi1-hat<0 est}
Suppose that \eqref{Q0 conditions GtT2} and  \eqref{L conditons} hold. Then
\begin{equation}\label{phi1-hat<0  est1}
\widehat{\phi_1}^{-}\left(-\frac{4\pi^2n}{c^2}\right)\ll \frac{1}{(nT)^{A}},\quad\hbox{for}\quad
0<n\ll\frac{Q^2C^4t^2}{\alt_0^2L^2},\quad
n\gg\frac{Q^2C^4t^2}{\alt_0^2L^2},
\end{equation}
\begin{equation}\label{phi1-hat<0  sp1}
\widehat{\phi_1}^{-}\left(-\frac{4\pi^2n}{c^2}\right)\sim
\frac{U_1(z_1)e^{2i\sgh(h)TF_1(z_1)}}{c^{1+2i\sgh(h)t}\sqrt{z_1}},
\quad\hbox{for}\quad n\sim\frac{Q^2C^4t^2}{\alt_0^2L^2},
\end{equation}
where
\begin{equation}\label{U1def}
U_1(z)=U\left(\frac{c^2z}{L}\right)V\left(c\frac{z+q^2}{4q},\sgh(h)t,T\right)
\exp\left(-G^2\log^2\frac{x(z)+\sqrt{x^2(z)-1+\alt_0^2}}{1+\alt_0}\right),
\end{equation}
\begin{equation}\label{F1def}
F_1(z)=\Le\left(\frac{z-q^2}{z+q^2}\right)+\alt_0\xi\left(\frac{2\pi\sqrt{nz}}{t}\right)-\alt_0\log\sqrt{z},
\end{equation}
$\xi(v)$ is defined in \eqref{GF2tz xi def} and $z_1$ is a solution of the equation:
\begin{equation}\label{z1 def}
\frac{1-\alt_0^2}{\alt_0x(z)+\sqrt{x(z)^2-1+\alt_0^2}}\frac{2q^2}{(z+q^2)^2}=\frac{2\pi^2n\alt_0/t^2}{1+\sqrt{1+\frac{4\pi^2nz}{t^2}}},
\quad  x(z)=\frac{z-q^2}{z+q^2}.
\end{equation}
Moreover, one has $z_1\approx\frac{qt}{\alt_0\sqrt{n}}$ and $z_1$ is independent of $c$.
\end{lem}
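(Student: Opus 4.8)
The plan is to mirror, essentially line by line, the argument used to prove Lemma \ref{lem:phi2+hat<0 est}, adapting it to the transform $\widehat{\phi_1}^{-}$ defined through $\Phi^{(-,-)}$ rather than $\Phi^{(-,+)}$. The key structural point is that $\phi_1(x)$ and $\phi_2(x)$ have the same shape: both carry a cutoff $U(l/L)$, a factor $V(\cdot,\pm t,T)$, and the same Gaussian-times-oscillatory factor $\LE$; the only changes are the sign of $(qc)^2$ in the argument of $V$ and of $\LE$ (because of the change of variables \eqref{change of variables2} versus \eqref{change of variables1}), the presence of $\sgh(h)$ multiplying $t$, and the fact that $\widehat{\phi_1}^{-}$ involves the $K$-Bessel function through \eqref{Phi+- itdef} in exactly the same way $\widehat{\phi_2}^{+}$ did through \eqref{Phi-+ itdef}. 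Accordingly, I would begin by writing, with $y=2\pi\sqrt{n}/c$, the integral representation
\[
\widehat{\phi_1}^{-}\left(-y^2\right)=
\frac{2y}{\pi}\sin\!\left(\tfrac{\pi}{4}\pm\pi it\right)\int_0^{\infty}K_{2it}(2y\sqrt{x})
\frac{U(x/L)}{x^{1+i\sgh(h)t}}V\!\left(\tfrac{x+(qc)^2}{4qc},\sgh(h)t,T\right)
\LE\!\left(\tfrac{x-(qc)^2}{x+(qc)^2}\right)dx,
\]
obtained from \eqref{phi- transform def}, \eqref{Phi+- itdef} and \eqref{S1 phi1 def}.

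Next I would dispose of the range $n\gg\frac{C^2t^2}{L}T^{\epsilon}$ exactly as in the proof of \eqref{phi2+hat  est1}: substitute $x=\frac{4t^2}{y^2}z^2$, use the differential equation \eqref{K Dun difeq}, and integrate by parts twice; the weight estimates for $V$ (from \eqref{Vestimate}) and for the $\LE$-derivative (from \eqref{phi2+ h1 deriv}, \eqref{phi2+ U2deriv}) are identical in form, so repeated integration by parts yields $(nT)^{-A}$. For $0<n\ll\frac{C^2t^2}{L}T^{\epsilon}$ I would substitute $x=c^2z$, insert the smooth partition of unity \eqref{partition of 1}, discard the $\chi_\infty$-part by \eqref{K2tz z>1 est}, bound the $\chi_1$-part by $t^{-A/3}$ using \eqref{K2tz z=1 est} together with Lemma \ref{Lemma BKY} (the parameters $R=tc^2/L$, $P=c^2/L$, $Y=t$, $V=L/(c^2t^{2/3})$ carry over verbatim), and for the $\chi_0$-part insert the main term of \eqref{K2tz asympt}, producing an integral $\int U_1(z)e^{2i\sgh(h)TF_1(z)}dz$ — up to the harmless factor $\sqrt{n}/(c^{1+2i\sgh(h)t}\sqrt{t})$ — with $U_1,F_1$ as in \eqref{U1def},\eqref{F1def}. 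The saddle point equation for the minus branch is \eqref{z1 def}, and the heuristic $z_1\approx qt/(\alt_0\sqrt n)$ follows because the left-hand side of \eqref{z1 def} is $\asymp q^2\alt_0^{-1}z_1^{-2}$ while the right-hand side is $\asymp n\alt_0/t^2$; independence of $z_1$ from $c$ is visible from \eqref{z1 def} once one writes $z\sim L/c^2$, $q\sim Q$, so that the equation only constrains the combination $z/L\cdot c^2$ — i.e. after the substitution the $c$-dependence drops out, just as for $z_0$ in Lemma \ref{lem:phi1-hat<0 est}'s companion. Then split into $n\ll N$, $n\sim N$, $n\gg N$ with $N=q^2c^4t^2/(\alt_0^2L^2)$ and apply Lemma \ref{Lemma BKY} in the first and third ranges (parameters as in \eqref{phi2+hat F5 RPY def}, \eqref{phi2+hat F5deriv-2}–\eqref{phi2+hat F5- 2deriv}) to get $T^{-A}$, and apply the stationary-phase expansion \cite[Proposition 8.2]{BKY} with the parameters \eqref{BKY8.2 param} in the range $n\sim N$ to obtain \eqref{phi1-hat<0  sp1}.

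The only genuine difference from Lemma \ref{lem:phi1-hat<0 est}'s analogue lies in the sign changes in the argument of $\Le$ — now $x(z)=(z-q^2)/(z+q^2)$ rather than $(z+q^2)/(z-q^2)$ — and in the Bessel order carrying $\sgh(h)t$. For the $K$-Bessel steps the sign $\sgh(h)$ is irrelevant since $K_{2it}=K_{-2it}$, so it merely tags along in the final phase. For the $\Le$-computations one must recheck that $\frac{d}{dz}\Le(x(z))$ still has the size $q^2\alt_0^{-1}z^{-2}$ and sign making the stationary point an honest one in the minus branch; this is a direct computation from \eqref{phi2+ h1 deriv} and $x'(z)=\mp 2q^2/(z\pm q^2)^2$, and the outcome is the same up to constants. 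I expect the main obstacle — really the only place requiring care rather than transcription — to be verifying that in the range $n\sim N$ the second derivative $F_1''(z_1)$ is nondegenerate with $|F_1''(z_1)|\asymp q^2/(\alt_0 z_1^3)$ (equivalently $\asymp n\alt_0/(z_1 t^2)$, after using \eqref{z1 def}), so that the error and cross terms in the Taylor expansion of $\Le(x(z))$ around $z_1$ are genuinely negligible; this is the analogue of \eqref{phi2+hat F5- 2deriv}–\eqref{phi2+hat F5- 2derivasympt} and goes through because, as there, the $\alt_0^{-3}q^4z^{-4}$ and $\alt_0 n^2 t^{-4}$ pieces are both $\ll T^{-\epsilon}$ times the surviving $\alt_0^{-1}q^2z^{-3}$ term when $n\sim N$ and $L$ satisfies \eqref{L conditons}.
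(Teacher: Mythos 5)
Your proposal identifies the wrong Bessel function, and this propagates into a structurally different (and partly unnecessary) argument. The transform $\widehat{\phi_1}^{-}(-y^2)$ is built from $\Phi^{(-,-)}$ (see \eqref{phi- transform def} and \eqref{Phi-- itdef}), which is a combination of $F_{2it}$ and $G_{2it}$, \emph{not} the $K$-Bessel function; the $K$-Bessel enters $\phi_1$-transforms only through $\Phi^{(+,-)}$, i.e.\ for $\widehat{\phi_1}^{-}(+y^2)$, which is the subject of Lemma~\ref{lem:phi1-hat>0 est}, not the present lemma. Consequently, the integral representation you wrote down (the one with $\sin(\pi/4\pm\pi it)\int K_{2it}(2y\sqrt x)\cdots$) is the opening formula of the \emph{other} lemma; the paper's actual starting point is
\begin{equation}
\widehat{\phi_1}^{-}\left(-y^2\right)=
\frac{-y}{\sqrt{2}}\int_{0}^{\infty}\left(F_{2it}(2y\sqrt{x})+G_{2it}(2y\sqrt{x})\right)
\frac{U(x/L)}{x^{1+i\sgh(h)t}}\,
V\!\left(\tfrac{x+(qc)^2}{4qc},\sgh(h)t,T\right)\LE\!\left(\sgh(h)T,\tfrac{x-(qc)^2}{x+(qc)^2}\right)dx,
\end{equation}
i.e.\ the analogue of \eqref{phi2+hat  int1} from Lemma~\ref{lem^phi2+hat est}, not of \eqref{phi2+hat<0  int1}.

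Because the $F,G$-asymptotics \eqref{GF2tz asympt} hold uniformly for all $z>0$, the smooth partition \eqref{partition of 1} into $\chi_0,\chi_1,\chi_\infty$, the Airy-type transition zone around $z=1$, and the use of \eqref{K2tz z>1 est}, \eqref{K2tz z=1 est}, \eqref{K2tz asympt} are simply not needed here and should be discarded. The large-$n$ estimate is done by integrating by parts against \eqref{FG Dun difeq} as in Lemma~\ref{lem^phi2+hat est}, after which one substitutes \eqref{GF2tz asympt} directly, obtaining the phase $\Le(x(z))\pm\sgh(h)\alt_0\xi(2\pi\sqrt{nz}/t)-\alt_0\log\sqrt z$ with $\xi$ (not $\eta$) from \eqref{GF2tz xi def}; the sign under the square root in the saddle equation \eqref{z1 def} is then a plus, as stated. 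The crucial new feature compared with Lemma~\ref{lem^phi2+hat est} is that the change of variables $2m-n=q$ (see \eqref{change of variables2}) puts $x(z)=(z-q^2)/(z+q^2)$ in the $\Le$-argument, which flips the sign in \eqref{phi1-hat F7deriv} relative to \eqref{phi2+hat F3deriv} and \emph{creates} a saddle point at $z_1\approx qt/(\alt_0\sqrt n)$; from that point on the splitting $n\ll N$, $n\sim N$, $n\gg N$ and the application of \cite[Proposition 8.2]{BKY} do run parallel to Lemma~\ref{lem:phi2+hat<0 est}, and your computation of $F_1''(z_1)$ and the verification of the BKY hypotheses with parameters \eqref{BKY8.2 param} are on target. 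But the $K$-Bessel machinery and the $\chi$-partition should be removed entirely; left in, they produce spurious cases that do not arise and would require hypotheses (e.g.\ $|x-2t|\ll t^{1/3+\epsilon}$ for \eqref{K2tz z=1 est}) that have no counterpart for $F_{2it},G_{2it}$.
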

\begin{proof}
Let $y=\frac{2\pi\sqrt{n}}{c}$. It follows from \eqref{phi- transform def}, \eqref{Phi-- itdef} and \eqref{S1 phi1 def} that
\begin{multline}\label{phi1-hat  int1}
\widehat{\phi_1}^{-}\left(-y^2\right)=
\frac{-y}{\sqrt{2}}\int_{0}^{\infty}\left(F_{2it}(2y\sqrt{x})+G_{2it}(2y\sqrt{x}\right)
\frac{U(x/L)}{x^{1+i\sgh(h)t}}\\\times
V\left(\frac{x+(qc)^2}{4qc},\sgh(h)t,T\right)\LE\left(\sgh(h)T,\frac{x-(qc)^2}{x+(qc)^2}\right)dx.
\end{multline}
For $n\gg\frac{C^2t^2}{L}T^{\epsilon}$, arguing as in the proof \eqref{phi2+hat  est1}, we obtain the estimate \eqref{phi1-hat<0  est1}.

Next, let us consider the remaining case: $n\ll\frac{C^2t^2}{L}T^{\epsilon}$. First, we make the change of variables $x=c^2z$ in \eqref{phi1-hat  int1}. As a result,
\begin{multline}\label{phi1-hat  int2}
\widehat{\phi_1}^{-}\left(-y^2\right)=
\frac{-y}{c^{2i\sgh(h)t}\sqrt{2}}\int_{0}^{\infty}\left(F_{2it}(4\pi\sqrt{nz})+G_{2it}(4\pi\sqrt{nz}\right)\\\times
\frac{U(c^2z/L)}{z^{1+i\sgh(h)t}}V\left(c\frac{z+q^2}{4q},\sgh(h)t,T\right)
\LE\left(\sgh(h)T,\frac{z-q^2}{z+q^2}\right)dz.
\end{multline}
For the expression above, we substitute the asymptotic formula \eqref{GF2tz asympt}. As usual, it is enough to consider only the contribution of the main term:
\begin{equation}\label{phi1-hat  int3}
\widehat{\phi_1}^{-}\left(-y^2\right)\sim
\frac{\sqrt{n}}{c^{1+2i\sgh(h)t}\sqrt{t}}\int_{0}^{\infty}U_7(z)e^{2i\sgh(h)TF_7(z,\pm)}dz,
\end{equation}
where
\begin{equation}\label{phi1-hat U7def}
U_7(z)=
\frac{U(c^2z/L)}{z\left(1+\frac{4\pi^2nz}{t^2}\right)^{1/4}}V\left(c\frac{z+q^2}{4q},\sgh(h)t,T\right)
\exp\left(-G^2\log^2\frac{x(z)+\sqrt{x^2(z)-1+\alt_0^2}}{1+\alt_0}\right),
\end{equation}
\begin{equation}\label{phi1-hat F7def}
F_7(z,\pm)=\Le(x(z))\pm\sgh(h)\alt_0\xi\left(\frac{2\pi\sqrt{nz}}{t}\right)-\alt_0\log\sqrt{z},\quad  x(z)=\frac{z-q^2}{z+q^2}.
\end{equation}
Using \eqref{GF2tz xi def} and \eqref{phi2+ h1 deriv} we obtain
\begin{equation}\label{phi1-hat F7deriv}
\frac{d}{dz}F_7(z,\pm)=\frac{-(1-\alt_0^2)}{\alt_0x(z)+\sqrt{x(z)^2-1+\alt_0^2}}\frac{2q^2}{(z+q^2)^2}
\pm\sgh(h)\frac{\alt_0}{2z}\sqrt{1+\frac{4\pi^2nz}{t^2}}-\frac{\alt_0}{2z}.
\end{equation}
Therefore, $|\frac{d}{dz}F_7(z,-\sgh(h))|\gg\frac{\alt_0}{z}$ and this case as before produces a negligible contribution. So it is required to consider
\begin{equation}\label{phi1-hat F7deriv2}
\frac{d}{dz}F_7(z,\sgh(h))=\frac{-(1-\alt_0^2)}{\alt_0x(z)+\sqrt{x(z)^2-1+\alt_0^2}}\frac{2q^2}{(z+q^2)^2}
+\frac{2\pi^2\alt_0n/t^2}{1+\sqrt{1+\frac{4\pi^2nz}{t^2}}}.
\end{equation}
The saddle point $z_1$ is a solution of the equation:
\begin{equation}\label{F7-sp}
\frac{1-\alt_0^2}{\alt_0x(z)+\sqrt{x(z)^2-1+\alt_0^2}}\frac{2q^2}{(z+q^2)^2}=\frac{2\pi^2n\alt_0/t^2}{1+\sqrt{1+\frac{4\pi^2nz}{t^2}}}.
\end{equation}
The left-hand side of \eqref{F7-sp} is approximately $q^2\alt_0^{-1}z_1^{-2}$, and as a result,
$z_1\approx\frac{qt}{\alt_0\sqrt{n}}$. This point belongs to the interval of integration only if $n\sim\frac{q^2c^4t^2}{\alt_0^2L^2}=N$ (this is the same $N$ as in \eqref{N def}). Thus similarly to Lemma \ref{lem:phi2+hat<0 est} we consider three cases: $n\ll N,$ $n\sim N$ and $n\gg N$.   Note that due to \eqref{L conditons} one has $$\frac{q^2c^4t^2}{\alt_0^2L^2}\ll\frac{c^2t^2}{2L}T^{-\epsilon}$$, and therefore, for $n<cN$ the estimate  $$\frac{4\pi^2nz}{t^2}\ll T^{-\epsilon}$$ holds.
The cases $n\ll N$ and $n\gg N$ can be treated in the same way as in Lemma \ref{lem:phi2+hat<0 est}. Doing so, we prove \eqref{phi1-hat<0  est1}. Note that now the case
$n\gg N$  is simpler than in Lemma \ref{lem:phi2+hat<0 est} due to the absence of $\chi()$ function and the plus sign under the square root in \eqref{phi1-hat F7deriv2}.
Consider the case $n\sim N$. Once again the function $U_7(z)$ can be simplified to the following expression:
\begin{equation}\label{phi1-hat U7def2}
U_7(z)=
\frac{U(c^2z/L)}{z}V\left(c\frac{z+q^2}{4q},\sgh(h)t,T\right)
\exp\left(-G^2\log^2\frac{x(z)+\sqrt{x^2(z)-1+\alt_0^2}}{1+\alt_0}\right).
\end{equation}
Consequently, (see \eqref{phi2+hat U3deriv}) the parameter $V$ in \cite[Proposition 8.2]{BKY} can be taken as  $V=L/(c^2T^{\epsilon})$.  Evaluating the derivative of \eqref{phi1-hat F7deriv2}, we show that
\begin{multline}\label{phi1-hat F7 2deriv1}
\frac{d^2}{dz^2}F_7(z,\sgh(h))=
\frac{1-\alt_0^2}{\left(\alt_0x(z)+\sqrt{x(z)^2-1+\alt_0^2}\right)^2}\left(\alt_0+\frac{x(z)}{\sqrt{x(z)^2-1+\alt_0^2}}\right)\frac{4q^4}{(z+q^2)^4}+\\
+\frac{1-\alt_0^2}{\alt_0x(z)+\sqrt{x(z)^2-1+\alt_0^2}}\frac{4q^2}{(z+q^2)^3}
-\frac{\alt_0(2\pi^2n/t^2)^2}{\left(1+\sqrt{1+\frac{4\pi^2nz}{t^2}}\right)^2\sqrt{1+\frac{4\pi^2nz}{t^2}}}.
\end{multline}
Up to some constants, the asymptotic relation holds:
\begin{equation}\label{phi2+hat F7 2deriv2}
\frac{d^2}{dz^2}F_7(z,\sgh(h))\sim
\frac{q^4}{\alt_0^3z^4}+\frac{q^2}{\alt_0z^3}-\frac{\alt_0n^2}{t^4}.
\end{equation}
Since $n\sim N$ one has $\frac{\alt_0n^2}{t^4}\sim\frac{q^4}{\alt_0^3z^4}\ll\frac{q^2}{\alt_0z^3}T^{-\epsilon}$. Therefore,
\begin{equation}\label{phi1-hat F7 2deriv3}
\frac{d^2}{dz^2}F_7(z,\sgh(h))\sim\frac{q^2}{\alt_0z^3} \quad\hbox{and}\quad
|\frac{d^j}{dz^j}F_7(z,\sgh(h))|\ll\frac{q^2}{\alt_0z^{1+j}}.
\end{equation}
Using \eqref{F7-sp} one can write down an asymptotic formula for $F_7''(z_1,\sgh(h))$ but  the main term  will still depend on $z_1$:
\begin{multline}\label{phi1-hat F7 2derivasympt}
\frac{d^2}{dz^2}F_7(z,\sgh(h))=
\frac{1-\alt_0^2}{\alt_0x(z)+\sqrt{x(z)^2-1+\alt_0^2}}\frac{4q^2}{(z+q^2)^3}\left(1+O(T^{-\epsilon})\right)=\\=
\frac{4\pi^2n\alt_0/t^2}{\left(1+\sqrt{1+\frac{4\pi^2nz}{t^2}}\right)(z_1+q^2)}\left(1+O(T^{-\epsilon})\right)=
\frac{2\pi^2n\alt_0}{z_1t^2}\left(1+O(T^{-\epsilon})\right).
\end{multline}
Finally, applying \cite[Proposition 8.2]{BKY} with parameters \eqref{BKY8.2 param}, we prove that
\begin{equation}\label{phi1-hat  int4}
\widehat{\phi_1}^{-}\left(y^2\right)\sim
\frac{\sqrt{z_1}}{c^{1+2i\sgh(h)t}}U_7(z_1)e^{2i\sgh(h)TF_7(z_1,\pm)}.
\end{equation}
\end{proof}

The proof of the next result  is similar to the one of Lemma \ref{lem^phi2+hat est}.
\begin{lem}\label{lem:phi1-hat>0 est}
Suppose that \eqref{Q0 conditions GtT2} and  \eqref{L conditons} hold. Then
\begin{equation}\label{phi1-hat>0  est1}
\widehat{\phi_1}^{-}\left(\frac{4\pi^2n}{c^2}\right)\ll
\frac{1}{(nT)^{A}},\quad\hbox{for}\quad
n\gg\frac{C^2t^2}{L}T^{\epsilon},
\end{equation}
\begin{equation}\label{phi1-hat>0  est2}
\widehat{\phi_1}^{-}\left(\frac{4\pi^2n}{c^2}\right)\ll \frac{1}{T^{A}},\quad\hbox{for}\quad
0<n\ll\frac{C^2t^2}{L}T^{\epsilon}.
\end{equation}
\end{lem}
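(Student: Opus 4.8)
The plan is to follow the proof of Lemma~\ref{lem:phi2+hat<0 est} almost verbatim, since $\widehat{\phi_1}^{-}$ at a positive argument again carries the $K$-Bessel function, the only difference — which is what makes the transform negligible rather than producing a main term — being that the relevant phase now has no stationary point, exactly as in Lemma~\ref{lem^phi2+hat est}. Putting $y=2\pi\sqrt n/c$, it follows from \eqref{phi- transform def}, \eqref{Phi+- itdef} and \eqref{S1 phi1 def} that
\begin{equation*}
\widehat{\phi_1}^{-}\left(y^2\right)\sim y\int_{0}^{\infty}e^{\pi t}K_{2it}(2y\sqrt x)\,\frac{U(x/L)}{x^{1+i\sgh(h)t}}\,V\left(\frac{x+(qc)^2}{4qc},\sgh(h)t,T\right)\LE\left(\sgh(h)T,\frac{x-(qc)^2}{x+(qc)^2}\right)dx,
\end{equation*}
where I used that the Gamma factor $\Gamma(1/4-it)\Gamma(3/4+it)$ in \eqref{Phi+- itdef} has modulus $\asymp e^{-\pi t}$, so that $e^{\pi t}K_{2it}$ appears and the asymptotics \eqref{K2tz asympt}--\eqref{K2tz z=1 est} are directly available. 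The only structural change from the corresponding integral for $\widehat{\phi_2}^{+}$ in Lemma~\ref{lem:phi2+hat<0 est} is that the second argument of $\LE$ is now $\tfrac{x-(qc)^2}{x+(qc)^2}$ rather than $\tfrac{x+(qc)^2}{x-(qc)^2}$, and this sign change is what decides everything below.

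First, for $n\gg\frac{C^2t^2}{L}T^{\epsilon}$ I would argue as for the large-$n$ bound in Lemma~\ref{lem^phi2+hat est}: after rescaling so that the argument of $K$ equals $2tv$, the hypothesis forces $v\gg T^{\epsilon}$ on the support of $U(x/L)$, hence $v-1\gg t^{-2/3+\epsilon}$, and the super-polynomial decay of $e^{\pi t}K_{2it}(2tv)$ from \eqref{K2tz z>1 est} — equivalently, iterated integration by parts via \eqref{K Dun difeq}, which behaves exactly like \eqref{FG Dun difeq} for $v\gg T^{\epsilon}$ as remarked in the proof of Lemma~\ref{lem:phi2+hat<0 est} — together with the fact that the $K$-Bessel argument grows like $\sqrt n$, yields the decaying estimate \eqref{phi1-hat>0  est1}.

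Next, for $0<n\ll\frac{C^2t^2}{L}T^{\epsilon}$ I would put $x=c^2z$, so $z\asymp L/c^2$ and the argument of $K$ equals $4\pi\sqrt{nz}$, insert the partition of unity \eqref{partition of 1} in $v=\tfrac{2\pi\sqrt{nz}}{t}$, and treat the three pieces as in Lemma~\ref{lem:phi2+hat<0 est}. The $\chi_{\infty}$-piece is negligible by \eqref{K2tz z>1 est}; the $\chi_1$-piece is handled with \eqref{K2tz z=1 est} and Lemma~\ref{Lemma BKY} applied to the phase $\Le\!\left(\tfrac{z-q^2}{z+q^2}\right)-\alt_0\log\sqrt z$, whose derivative is $\ll-\tfrac{q^2}{\alt_0z^2}-\tfrac{\alt_0}{2z}$ by \eqref{phi2+ h1 deriv}, of fixed sign, giving $RV\gg t^{1/3}$ and a bound $\ll T^{-A}$ as in \eqref{phi2+hat<0 int4}. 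For the $\chi_0$-piece, substituting the main term of \eqref{K2tz asympt} produces integrals $\sim\tfrac{\sqrt n}{c^{1+2i\sgh(h)t}\sqrt t}\int_0^{\infty}g(z)e^{2i\sgh(h)TF(z,\pm)}dz$ with a smooth amplitude $g$ supported on $z\asymp L/c^2$ and
\begin{equation*}
F(z,\pm)=\Le\!\left(\frac{z-q^2}{z+q^2}\right)\pm\alt_0\,\eta\!\left(\frac{2\pi\sqrt{nz}}{t}\right)-\alt_0\log\sqrt z ;
\end{equation*}
the crucial step is then to observe that, by \eqref{phi2+ h1 deriv} and \eqref{K2tz eta def},
\begin{equation*}
\frac{d}{dz}F(z,\pm)=-\frac{1-\alt_0^2}{\alt_0x(z)+\sqrt{x(z)^2-1+\alt_0^2}}\,\frac{2q^2}{(z+q^2)^2}\;\mp\;\frac{\alt_0}{2z}\sqrt{1-\tfrac{4\pi^2nz}{t^2}}\;-\;\frac{\alt_0}{2z},\qquad x(z)=\frac{z-q^2}{z+q^2},
\end{equation*}
and since $\sqrt{1-\tfrac{4\pi^2nz}{t^2}}\le1$ the last two terms together are $\le0$ for both signs, so $\tfrac{d}{dz}F(z,\pm)$ never vanishes: one gets $\big|\tfrac{d}{dz}F(z,+)\big|\gg\tfrac{\alt_0}{z}$ and $\big|\tfrac{d}{dz}F(z,-)\big|\gg\tfrac{q^2}{\alt_0z^2}+\tfrac{\alt_0n}{t^2}$. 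Applying Lemma~\ref{Lemma BKY} with $R=T\tfrac{q^2c^4}{\alt_0L^2}+\tfrac{n}{t}$, $P=\tfrac{L}{c^2}$, $V=\tfrac{L}{c^2T^{\epsilon}}$, $Y=PR$ — the parameters used for $n\ll N$ in \eqref{phi2+hat F5 RPY def}, after subdividing the $\chi_0$-range exactly as there to absorb the amplitude factor $(1-\tfrac{4\pi^2nz}{t^2})^{-1/4}$ — and checking $RV\gg T^{\epsilon}$ via \eqref{L conditons}, one obtains \eqref{phi1-hat>0  est2}.

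I expect the main obstacle to be the routine-but-delicate verification, uniform over all $n,q,c,L$ admissible under \eqref{Q0 conditions GtT2} and \eqref{L conditons}, that $RV$ in Lemma~\ref{Lemma BKY} stays above $T^{\epsilon}$ throughout the $\chi_0$-piece: this forces one to combine the lower bound $\big|\tfrac{d}{dz}F(z,-)\big|\gg\tfrac{q^2}{\alt_0z^2}$ with the upper bound on $L$ in \eqref{L conditons}, and to keep track of the parameters in the transitional range $v\asymp1$, where the Airy-type estimates \eqref{K2tz z=1 est} must replace an oscillatory expansion. Conceptually, the point that separates this lemma from Lemma~\ref{lem:phi1-hat<0 est} is precisely the sign analysis above: pairing the negative $\Le'$-contribution coming from $\phi_1$ with the $K$-Bessel ($\eta$) phase leaves the stationary-phase function monotone, whereas in Lemma~\ref{lem:phi1-hat<0 est} the same $\Le$ is paired with the $J$-Bessel ($\xi$) phase, for which $\sqrt{1+v^2}>1$ supplies a term of the opposite sign and thus creates the saddle point $z_1$ and a main term.
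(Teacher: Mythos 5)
Your proposal is correct and follows essentially the same path as the paper: you set up the $K$-Bessel integral via \eqref{phi- transform def}, \eqref{Phi+- itdef} and \eqref{S1 phi1 def}, treat the large-$n$ range by the same integration-by-parts argument as \eqref{phi2+hat  est1}, use the partition of unity \eqref{partition of 1} for the small-$n$ range, and — the key point — identify that after inserting \eqref{K2tz asympt} the phase derivative has both contributions of a fixed sign, so there is no saddle and Lemma \ref{Lemma BKY} applies with the parameters \eqref{phi1-hat>0 F8 R def}. Your conceptual observation that the $K$-Bessel $\eta$-phase ($\sqrt{1-v^2}\leq 1$) keeps the phase monotone, whereas the $J$-Bessel $\xi$-phase ($\sqrt{1+v^2}>1$) produces the saddle in Lemma \ref{lem:phi1-hat<0 est}, is exactly what distinguishes the two lemmas; the only thing you leave implicit, and which the paper spells out, is the explicit split of the $\chi_0$-range into three sub-ranges with the modified $P,V\sim L/(c^2t^{2/3-\epsilon})$ in the transitional region near $v\asymp 1$, but you correctly flag that this subdivision is needed.
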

\begin{proof}
Let $y=\frac{2\pi\sqrt{n}}{c}$. It follows from \eqref{phi- transform def}, \eqref{Phi+- itdef} and \eqref{S1 phi1 def} that
\begin{multline}\label{phi1-hat>0  int1}
\widehat{\phi_1}^{-}\left(y^2\right)=
\frac{2y}{\pi}\sin(\pi/4-\pi it)\int_{0}^{\infty}K_{2it}(2y\sqrt{x})
\frac{U(x/L)}{x^{1+i\sgh(h)t}}\\\times
V\left(\frac{x+(qc)^2}{4qc},\sgh(h)t,T\right)
\LE\left(\sgh(h)T,\frac{x-(qc)^2}{x+(qc)^2}\right)dx.
\end{multline}
For $n\gg\frac{C^2t^2}{L}T^{\epsilon}$  the integral \eqref{phi1-hat>0  int1} is bounded by $(nT)^{-A}$. The proof is similar to the one of \eqref{phi2+hat  est1},
since the differential equations \eqref{K Dun difeq} and \eqref{FG Dun difeq} differ from each other only slightly in the range $z\gg T^{\epsilon}.$
From now on, we assume that $n\ll\frac{C^2t^2}{L}T^{\epsilon}$. Making the change of variables $x=c^2z$ in \eqref{phi1-hat>0  int1}, we show that
\begin{multline}\label{phi1-hat>0  int2}
\widehat{\phi_1}^{-}\left(y^2\right)=
\frac{2y}{\pi c^{2i\sgh(h)t}}\sin(\pi/4-\pi it)\int_{0}^{\infty}K_{2it}(4\pi\sqrt{nz})
\frac{U(c^2z/L)}{z^{1+i\sgh(h)t}}\\\times
V\left(c\frac{z+q^2}{4qc},\sgh(h)t,T\right)
\LE\left(\sgh(h)T,\frac{z-q^2}{z+q^2}\right)dx.
\end{multline}
Next, similarly to Lemma \ref{lem:phi2+hat<0 est} we make a partition of the integral \eqref{phi1-hat>0  int2} with the use of \eqref{partition of 1}. The part with $\chi_{\infty}()$ is  negligible since the K-Bessel function is rapidly decreasing. In the same manner as in Lemma \ref{lem:phi2+hat<0 est} one can show that the part with $\chi_{1}()$  is also negligible. So we are left to study the part with $\chi_{0}()$.  As usual, it is enough to consider only the contribution of the main term of the asymptotic expansion \eqref{K2tz asympt}:
\begin{equation}\label{phi1-hat>0 int3}
\widehat{\phi_1}^{-}\left(y^2\right)\sim\frac{\sqrt{n}}{c^{1+2i\sgh(h)t}\sqrt{t}}
\int_{0}^{\infty}U_8(z)e^{2i\sgh(h)TF_8(z,\pm)}dz,
\end{equation}
where
\begin{equation}\label{phi1-hat>0 U8def}
U_8(z)=\chi_{0}\left(\frac{2\pi\sqrt{nz}}{t}\right)
\frac{U(c^2z/L)}{z\left(1-\frac{4\pi^2nz}{t^2}\right)^{1/4}}V\left(c\frac{z+q^2}{4q},\sgh(h)t,T\right)
\exp\left(-G^2\log^2\frac{x(z)+\sqrt{x^2(z)-1+\alt_0^2}}{1+\alt_0}\right),
\end{equation}
\begin{equation}\label{phi1-hat>0 F8def}
F_8(z,\pm)=\Le(x(z))\pm\sgh(h)\alt_0\eta\left(\frac{2\pi\sqrt{nz}}{t}\right)-\alt_0\log\sqrt{z},\quad  x(z)=\frac{z-q^2}{z+q^2}.
\end{equation}
Note that $U_8(z)\neq0$ only if $n<\frac{c^2t^2}{2\pi^2L}(1-t^{-2/3+\epsilon})^2$.
Using \eqref{K2tz eta def} and \eqref{phi2+ h1 deriv}, we obtain
\begin{equation}\label{phi1-hat>0 F8deriv}
\frac{d}{dz}F_8(z,\pm)=\frac{-(1-\alt_0^2)}{\alt_0x(z)+\sqrt{x(z)^2-1+\alt_0^2}}\frac{2q^2}{(z+q^2)^2}
\mp\sgh(h)\frac{\alt_0}{2z}\sqrt{1-\frac{4\pi^2nz}{t^2}}-\frac{\alt_0}{2z}.
\end{equation}
Since $$\frac{d}{dz}F_8(z,\sgh(h))\gg\frac{\alt_0}{2z},$$ this case is easier than the second one, namely
\begin{equation}\label{phi1-hat>0 F8deriv-}
\frac{d}{dz}F_8(z,-\sgh(h))=\frac{-(1-\alt_0^2)}{\alt_0x(z)+\sqrt{x(z)^2-1+\alt_0^2}}\frac{2q^2}{(z+q^2)^2}
-\frac{2\pi^2n\alt_0/t^2}{1+\sqrt{1-\frac{4\pi^2nz}{t^2}}}.
\end{equation}
We remark that in the last case there are no saddle points and that the derivative \eqref{phi1-hat>0 F8deriv-} has much in common with the one in
\eqref{phi2+hat F3deriv2}. Nevertheless, the presence of  $\chi_{0}\left(\frac{2\pi\sqrt{nz}}{t}\right)$ and $\left(1-\frac{4\pi^2nz}{t^2}\right)^{1/4}$ in $U_8(z)$ makes computations in some sense similar to the one performed for the functions $F_5(z),U_5(z)$ (see Lemma \ref{lem:phi2+hat<0 est}).

To estimate \eqref{phi1-hat>0 int3} we apply Lemma \ref{Lemma BKY}.
To this end, we need to determine  parameters  $P,R,V,X,Y$ in \eqref{BKYconditions}.
One has
\begin{equation}\label{phi1-hat>0 F8deriv- 2}
\left|\frac{d}{dz}F_8(z,-\sgh(h))\right|\gg
\frac{q^2}{\alt_0z^2}+\frac{\alt_0n}{t^2}.
\end{equation}
Consequently,
\begin{equation}\label{phi1-hat>0 F8 R def}
R=T\frac{q^2c^4}{\alt_0L^2}+\frac{\alt_0n}{t^2}T.
\end{equation}
Evaluating the second derivative of $F_8(z,-\sgh(h))$, we show that
\begin{equation}\label{phi1-hat F8 2deriv}
\frac{d^2}{dz^2}F_8(z,-\sgh(h))\ll\frac{1}{\alt_0^3}\frac{q^4}{z^4}+\frac{q^2}{\alt_0z^3}+\frac{\alt_0n^2}{t^4\sqrt{1-\frac{4\pi^2nz}{t^2}}}\ll
\frac{q^2}{\alt_0z^3}+\frac{\alt_0n^2}{t^4\sqrt{1-\frac{4\pi^2nz}{t^2}}}.
\end{equation}

Suppose that $\delta c^2t^2/L<n<\frac{c^2t^2}{2\pi^2L}(1-t^{-2/3+\epsilon})^2$. In this case, $$R=\frac{T\alt_0c^2}{L},
\quad P=\frac{L}{t^{2/3-\epsilon}c^2}, \quad Y=PR.$$ Evaluating the derivatives of $U_8(z)$ (see \eqref{U5 deriv}), we infer that $$V=\frac{L}{c^2t^{2/3-\epsilon}}, \quad X=t^{1/6-\epsilon}.$$ Therefore,
\begin{equation}
RP=RV>\frac{T\alt_0}{t^{2/3-\epsilon}}=t^{1/3+\epsilon},
\end{equation}
and we prove that
\begin{equation}\label{phi1-hat>0 int4}
\int_{0}^{\infty}U_8(z)e^{2i\sgh(h)TF_8(z,\pm)}dz\ll\frac{t^{1/6-\epsilon}}{(RV)^{A}}+\frac{t^{1/6-\epsilon}}{(RP)^{A}}\ll T^{-A}.
\end{equation}

Suppose that $n<\delta c^2t^2/L$ and thus $1-\frac{4\pi^2nz}{t^2}\gg1.$ In this case, $$P=L/c^2, \quad Y=PR, \quad V=\frac{L}{c^2}T^{-\epsilon}.$$

Furthermore, let us suppose that $\frac{q^2t^2c^4}{\alt_0^2L^2}<n<\delta c^2t^2/L.$ Then $R=\frac{\alt_0n}{t^2}T$. Accordingly, using \eqref{L conditons}, we obtain
\begin{equation}
RP>RV>\frac{\alt_0n}{t^2}\frac{L}{c^2}T^{1-\epsilon}>
\frac{q^2c^2}{\alt_0L}T^{1-\epsilon}\gg T^{\epsilon},
\end{equation}
which results in \eqref{phi1-hat>0 int4}.

Suppose that $n<\frac{q^2t^2c^4}{\alt_0^2L^2}.$ Then $R=T\frac{q^2c^4}{\alt_0L^2}$ and
\begin{equation}
RP>RV>\frac{q^2c^2}{\alt_0L}T^{1-\epsilon}\gg T^{\epsilon}.
\end{equation}
This proves  \eqref{phi1-hat>0 int4}.

\end{proof}


\section{Proof of Theorem \ref{BF t-result}}\label{sec:proof of t theorem}
The proof of Theorem \ref{BF t-result} is similar to the one of \cite[Theorem 1.1]{BFsym2}  presented in \cite[sec.6]{BFsym2}.
We split  the sum over $c$ in \eqref{S2(CQL) def}, \eqref{S1(CQL) def} into three parts:
\begin{equation}
c\equiv0\pmod{4},\, (c,2)=1,\, c\equiv2\pmod{4},
\end{equation}
and then apply a suitable Voronoi summation formula to each of the cases. As in \cite[sec.6]{BFsym2},  in order to demonstrate the proof it is enough to consider only one case, for example when $c\equiv0\pmod{4}$.
Applying the Voronoi formula (see Lemma \ref{Thm Voronoi+ it c0mod4}) and using the properties of integral transforms proved in Section \ref{sec:integral transforms}, we obtain
\begin{multline}\label{S2(CQL)est1}
S_2(C,Q,L)\ll
\sum_{c\equiv0\pmod{4}}U\left(\frac{c}{C}\right)\sum_{q}U\left(\frac{q}{Q}\right)
\sum_{n}\STM_0(c,-n)
\frac{\Zag_{-n}(1/2+2it)}{|n|^{1/2-it}}\\\times
\frac{U_0(z_0)e^{2iTF_0(z_0)}}{c^{1+2it}\sqrt{z_0}}U\left(\frac{n\alt_0^2L^2}{Q^2C^4t^2}\right),
\end{multline}
\begin{multline}\label{S1(CQL)est1}
S_1(h,C,Q,L)\ll
\sum_{c\equiv0\pmod{4}}U\left(\frac{c}{C}\right)\sum_{q}U\left(\frac{q}{Q}\right)
\sum_{n}
\STM_0(c,-n)\frac{\Zag_{-n}(1/2+2it)}{|n|^{1/2-it}}\\\times
\frac{U_1(z_1)e^{2i\sgh(h)TF_1(z_1)}}{c^{1+2i\sgh(h)t}\sqrt{z_1}}U\left(\frac{n\alt_0^2L^2}{Q^2C^4t^2}\right).
\end{multline}
The right-hand sides \eqref{S2(CQL)est1} and \eqref{S1(CQL)est1} are  almost identical, so we consider further only \eqref{S2(CQL)est1}. Since $F_0(z_0)$ does not depend on $c$, we first perform the summation over $c$ in the same way as in \cite[sec.6]{BFsym2}. Let $\W(c)=U\left(\frac{c}{C}\right)U_0(z_0)$, then applying \eqref{STM0 series2}, we show that
\begin{multline}\label{S2(CQL)est2}
\sum_{c\equiv0\pmod{4}}U\left(\frac{c}{C}\right)U_0(z_0)
\frac{\STM_0(c,-n)}{c^{1+2it}}=\\ =\frac{1}{2\pi i}\int_{(1/2)}\tilde{\W}(z)
\frac{L^{\ast}(1/2+2it+z,-n)}{\zeta_2(1+4it+2z)}\overline{r_2(1/2-it+\bar{z}/2,-n)}dz.
\end{multline}
Note that $\tilde{\W}(z)\ll C^{\Re{z}}((1+|\Im{z}|)T^{-\epsilon})^{-A}.$ Hence truncating the integral in \eqref{S2(CQL)est2} at $\Im{z}=T^{\epsilon}$ produces a negligibly small error term. Using $z_0^{-1/2}\approx C/\sqrt{L}\approx\frac{\sqrt{nL}}{TCQ}$, \eqref{Last 2mom estimate} and \eqref{LZag 2mom estimate}, we prove that
\begin{equation}\label{S2(CQL)est3}
S_2(C,Q,L)\ll
Q\frac{\sqrt{L}}{TCQ}\left(\frac{Q^2C^4t^2}{\alt_0^2L^2}+\frac{QC^2t^{3/2}}{\alt_0L}\right)T^{\epsilon}\ll
\left(\frac{Q^2C^3T}{L^{3/2}}+\frac{QCt^{1/2}}{L^{1/2}}\right)T^{\epsilon}.
\end{equation}
Substituting \eqref{S2(CQL)est3} to \eqref{S2 to S2(CQL)} yields
\begin{multline}\label{S2 to S2(CQL) est1}
S_2(t)\ll \frac{T^{1+\epsilon}G}{t^{1/2}}
\sum_{C\ll t^{3/2}G^{-1+\epsilon}}\sum_{Q_0/C\ll Q\ll t^{3/2}G^{-1+\epsilon}/C}
\sum_{Q^2C^2G^{1-\epsilon}/\alt_0\ll L\ll QCT^{1+\epsilon}\sqrt{t}}
\left(
\frac{Q^2C^3T}{L^{3/2}}+\frac{QCt^{1/2}}{L^{1/2}}\right)\ll\\
\frac{T^{1+\epsilon}G}{t^{1/2}}\sum_{C\ll t^{3/2}G^{-1+\epsilon}}\sum_{Q_0/C\ll Q\ll t^{3/2}G^{-1+\epsilon}/C}
\left(\frac{T\alt_0^{3/2}}{G^{3/2}Q}+\frac{t^{1/2}\alt_0^{1/2}}{G^{1/2}}\right)\ll\\\ll
T^{1+\epsilon}G\left(\frac{t}{G^{3/2}T^{1/2}}+\frac{t^{1/2}}{G^{1/2}T^{1/2}}\right)\ll T^{1-\epsilon}G,
\end{multline}
provided that
\begin{equation}\label{G condition}
G\gg \frac{t^{2/3}}{T^{1/3-\epsilon}}.
\end{equation}
The same estimate holds for $S_1(t)$, and therefore, (see \eqref{t2mom est S1+S2}) we prove \eqref{BF t-mean Lindelof} provided that
\eqref{Q0 conditions GtT2} is satisfied, that is
\begin{equation}\label{G conditions final}
G\gg \max\left(\frac{t^{2/3}}{T^{1/3}},\frac{T}{t}\right)T^{\epsilon},\quad  t\ll T^{6/7-\epsilon}.
\end{equation}


\nocite{*}


\begin{thebibliography}{}


\bibitem{AHLQ}
\newblock
K. Aggarwal, R. Holowinsky, Y. Lin, Z. Qi, \emph{A Bessel delta method and exponential sums for $GL(2)$}, Q.J.Math., 71:3 (2020),  1143--1168.

\bibitem{Bal}
\newblock
O. Balkanova, \emph{The first moment of Maass form symmetric square $L$-functions}, Ramanujan J., 55 (2021), 761--781.


\bibitem{BF2F1 I}
\newblock
O. Balkanova, \emph{
Asymptotics of a Gauss hypergeometric function related to moments of symmetric square $L$-functions I}, arXiv:2408.05615 [math.NT].

\bibitem{BFsym2Explicit}
\newblock
O. Balkanova, D. Frolenkov, \emph{An explicit formula for the second moment of Maass form symmetric square $L$-functions},  Publ. Mat. 67 (2) 611--660, 2023.



\bibitem{BF2mom}
\newblock
O. Balkanova,  D. Frolenkov, \emph{Non-vanishing of Maass form symmetric square $L$-functions}, J. Math. Anal. Appl., 500:2 (2021), 125148 , 23 pp.



\bibitem{BFVoron}
\newblock
O. Balkanova,  D. Frolenkov, \emph{
A Voronoi summation formula for non-holomorphic Maass forms of half-integral weight}, Monatsh. Math. (2023), https://doi.org/10.1007/s00605-023-01921-3.


\bibitem{Balogh}
\newblock
C. B. Balogh, \emph{Asymptotic expansions of the modified Bessel function of the third kind of imaginary
order}, SIAM J. Appl. Math., 15,  (1967) pp. 1315-1323.






\bibitem{BKY}
\newblock
V. Blomer, R. Khan, M. P. Young, \emph{Distribution of mass of holomorphic cusp forms}, Duke
Math. J. 162 (2013), no. 14, 2609--2644. MR 3127809.

\bibitem{BST}
\newblock
A. R. Booker, A. Str\"{o}mbergsson,  H. Then, \emph{Bounds and algorithms for the K-Bessel function of
imaginary order}, LMS J. Comput. Math. 16 (2013), 78--108.



\bibitem{Dun90Bessel}
\newblock
T.M. Dunster, \emph{Bessel functions of purely imaginary order with
applications to second order linear differential equations}, SIAM J. Math. Anal., 21 (4), (1990), pp.995-1018.



\bibitem{BFsym2}
\newblock
 D. Frolenkov, \emph{
The second moment of Maass symmetric square $L$-functions at the central point},  arXiv:2408.05929 [math.NT].


\bibitem{BF2F1 II}
\newblock
D. Frolenkov, \emph{
Asymptotics of a Gauss hypergeometric function related to moments of symmetric-square $L$-functions II}, 	arXiv:2408.05619 [math.NT].


\bibitem{GR}
\newblock
I.S. Gradshteyn and I.M. Ryzhik, \emph{ Table of Integrals, Series, and
Products}. Edited by A. Jeffrey and D. Zwillinger. Academic Press, New York,
7th edition, 2007.




\bibitem{HB}
\newblock
D. R. Heath-Brown,  \emph{A mean value estimate for real character sums}, Acta Arith. 72 (1995), no. 3,
235--275.




\bibitem{Khan}
\newblock
R. Khan,  \emph{Non-vanishing of the symmetric square $L$-function at the central point.} Proc. London Math. Soc. (2010), 100 (3): 736--762.

\bibitem{KhYoung}
\newblock
R. Khan, M. Young \emph{Moments and hybrid subconvexity for symmetric-square $L$-functions},
Journal of the Institute of Mathematics of Jussieu, 2023; 22(5):2029-2073.



\bibitem{Ng}
\newblock
M.-H. Ng, \emph{Moments of automorphic L-functions}, Ph.D. thesis, University of Hong Kong, 2016.



\bibitem{HMF}
\newblock
F.W.J. Olver , D.W. Lozier, R.F. Boisvert and C.W. Clarke, \emph{{NIST}
{H}andbook of {M}athematical {F}unctions}, Cambridge University Press,
Cambridge 2010.

\bibitem{PRR}
\newblock
Y. Petridis, N. Raulf, M. Risager, \emph{Double Dirichlet series and quantum
unique ergodicity of weight one-half Eisenstein series}, Algebra Number Theory
Vol. 8 (2014), No. 7, 1539--1595.


\bibitem{Shimura}
\newblock
G. Shimura, \emph{On modular forms of half integral weight}, Ann. Math. (2) 97 , 440-481 (1973).

\bibitem{Strom}
\newblock
F. Str\"{o}mberg, \emph{Computation of Maass waveforms with nontrivial multiplier systems}, Math. Comp.
77 (2008), no. 264, 2375--2416.

\bibitem{TangXu}
\newblock
H. Tang, Z. Xu, \emph{Central value of the symmetric square $L$-functions related to Hecke-Maass forms} Lith. Math. J. 56.2 (2016), 251--267.




\bibitem{Z}
\newblock
D. Zagier, \emph{ Modular forms whose Fourier coefficients involve zeta-functions of quadratic fields}. Modular
functions of one variable, VI (Proc. Second Internat. Conf., Univ. Bonn, Bonn, 1976), pp. 105--169.
Lecture Notes in Math., Vol. 627, Springer, Berlin, 1977.




\end{thebibliography}
\end{document}